\newcommand \fk[1]{{{\mathfrak #1}}}
\newcommand \C[1]{{\mathcal #1}}
\newcommand \wti[1]{{\widetilde {#1}}}
\newcommand\fg{\mathfrak g}
\newcommand \bC{{\mathbb C}}
\newcommand \bH{{\mathbb H}}
\newcommand \bR{{\mathbb R}}
\newcommand \bZ{{\mathbb Z}}
\newcommand \bN{{\mathbb N}}
\newcommand \bbq{{\mathsf {q}}}
\newcommand\CA{{\C A}}
\newcommand\CC{{\C C}}
\newcommand\CH{{\C H}}
\newcommand\CO{{\C O}}
\newcommand\CR{{\C R}}
\newcommand\CP{{\C P}}
\newcommand\CG{{\C G}}
\newtheorem{theorem}{Theorem}[section]
\newtheorem{corollary}[theorem]{Corollary}
\newtheorem{lemma}[theorem]{Lemma}
\newtheorem{proposition}[theorem]{Proposition}
\newtheorem{definition}[theorem]{Definition}
\newtheorem{remark}[theorem]{Remark}
\newtheorem{example}[theorem]{Example}
\newcommand\End{\operatorname{End}}
\newcommand\Ind{\operatorname{Ind}}
\newcommand\fgen{{\mathsf{fg}}}
\newcommand\Id{\operatorname{Id}}
\def\<{\langle}
\def\>{\rangle}
\newcommand\R{{\mathcal R}}
\newcommand\so{{\mathcal O}}
\newcommand\D{{\mathcal D}}
\newcommand\sso{\mathcal O}
\newcommand\B{{\mathcal B}}
\newcommand\A{{\mathcal A}}
\newcommand\sA{{\mathcal A}}
\numberwithin{equation}{section}
\begin{document}

\title[On the reducibility of induced representations]{On the reducibility of induced representations for classical $p$-adic groups and related affine Hecke algebras}

\author
{Dan Ciubotaru}
        \address[D. Ciubotaru]{Mathematical Institute, University of Oxford, Oxford OX2 6GG, UK}
        \email{dan.ciubotaru@maths.ox.ac.uk}

 \author
 {Volker Heiermann}
 \address[V.Heiermann]{Aix-Marseille Universit\'e, CNRS, Centrale Marseille, I2M, UMR 7373, 13453 Marseille, France}
 \email{volker.heiermann@univ-amu.fr}

\begin{abstract}
 Let $\pi $ be an irreducible smooth complex representation of a general linear $p$-adic group and let $\sigma $ be an irreducible complex supercuspidal representation of a classical $p$-adic group of a given type, so that $\pi\otimes\sigma $ is a representation of a standard Levi subgroup of a $p$-adic classical group of higher rank. We show that the reducibility of the representation of the appropriate $p$-adic classical group obtained by (normalized) parabolic induction from $\pi\otimes\sigma $ does not depend on $\sigma $, if $\sigma $ is "separated" from the supercuspidal support of $\pi $. (Here, ``separated" means that, for each factor $\rho $ of a representation in the supercuspidal support of $\pi $, the representation parabolically induced from $\rho\otimes\sigma $ is irreducible.) This was conjectured by E. Lapid and M. Tadi\'c. (In addition, they proved, using results of C. Jantzen, that this induced representation is always reducible if the supercuspidal support is not separated.)

More generally, we study, for a given set $I$ of inertial orbits of supercuspidal representations of $p$-adic general linear groups, the category $\CC _{I,\sigma }$ of smooth complex finitely generated representations of classical $p$-adic groups of fixed type, but arbitrary rank, and supercuspidal support given by $\sigma $ and $I$, show that this category is equivalent to a category of finitely generated right modules over a direct sum of tensor products of extended affine Hecke algebras of type $A$, $B$ and $D$ and establish functoriality properties, relating categories with disjoint $I$'s. In this way, we extend results of C. Jantzen who proved a bijection between irreducible representations corresponding to these categories.
The proof of the above reducibility result is then based on Hecke algebra arguments, using Kato's exotic geometry.

\end{abstract}

\maketitle

\section{Introduction}

Let $G_d$ be the group of rational points of a classical group of relative semi-simple rank $d$ defined over a non-Archimedean local field $F$.\footnote{We don't make any assumption on the characteristic of $F$. Although in \cite{H3} characteristic $0$ is assumed, the relevant results used here from \cite{H3}, in particular \cite[Annex A, B, C]{H3}, do not use it.
(The restriction to characteristic $0$  has been made in \cite{H3} only because of the use of the results of C. M\oe glin, based on methods of J. Arthur, especially on the Langlands parameters for supercuspidal representations and reducibility points.) In \cite{H1}, no assumption on the characteristic of $F$ is made.} We understand by that that $G_d$ is either a general linear group, a symplectic group, a special odd orthogonal group, a full even orthogonal group or a unitary group, meaning that the latter ones are defined respectively by an alternating, symmetric or hermitian bilinear form. Remark that all these groups, except the even orthogonal one, are connected. The non-negative integer $d$ is  the rank of a maximal split torus of $G_d$. (In particular, $G_0$ is anisotropic.)

The type of $G_d$ and the splitting field will be fixed in the sequel, so that $G_{d'}$ will be a factor of a Levi subgroup of $G_d$ if $d'\leq d$. More generally, the Levi subgroups of $G_d$ will have the form $M\times G_{d'}$ (when $G_{d'}=1$, this is identified with $M$), where $M$ is a Levi subgroup of a general linear group with coefficients in a field $E$ which is equal to $F$ except if $G_d$ is a unitary group in which case $E$ equals the splitting field of $G_d$. (If $G$ denotes a general linear group, one can of course take $G_{d'}=1$.)

We will denote by $\R(G_d)$ the category of smooth complex representations of $G_d$.

Denote by $I$ a set of inertial orbits, closed under taking contragredients (or conjugate-duals if $E\ne F$), of (possibly different) general linear groups $GL_n(E)$, $n\in\Bbb N$. If $G$ is a classical group other than $GL_n$, fix an integer $d_0\geq 0$ and a supercuspidal representation $\sigma $ of $G_{d_0}$. If $G$ is a general linear group, then put $d_0=0$, so that in this case always $G_{d_0}=\{1\}$ and $\sigma =1$.

Let $\CC_{I;\sigma }$ be the direct sum of the full sub-categories of the category $\R (G_d)_{\fgen}$ of finitely generated representations of the groups $G_d$, $d\in\Bbb N$, whose irreducible subquotients contain each a representation of the form $\rho _1\otimes\cdots\otimes\rho _r\otimes\sigma $ in their supercuspidal support, with $\rho _i$ lying in some $\CO _i$ in $I$. (When $G_{d_0}=1$, then by convention $\rho _1\otimes\cdots\otimes\rho _r\otimes\sigma :=\rho _1\otimes\cdots\otimes\rho _r$.) If $G$ denotes a general linear group, write $\CC_I^{GL}$ instead of $\CC_{I;1 }$.

We first show (see Corollary \ref{c:2.6} with \ref{r:2.6}) that the category $\CC_{I;\sigma }$ is equivalent to the category of finitely generated right-modules over a direct sum of tensor products of endomorphism algebras of projective generators, which are known to be, possibly extended, affine Hecke algebras of type $A$, $B$ or $D$. It follows from this that for disjoint sets $I$ and $J$, one obtains a natural equivalence of categories $\Phi _{I,\sigma ;J,\sigma }:\CC_{I;\sigma }\otimes \CC_{J;\sigma }\rightarrow \CC_{I\cup J;\sigma }$, which is compatible with parabolic induction and Jacquet functor (Corollary \ref{c:2.7}).

On the other hand, parabolic induction defines a functor $M_{I,\sigma }:\CC_{I}^{GL}\otimes\CC_{I;\sigma }\rightarrow \CC_{I;\sigma }$. We show then that the equivalence of category $\Phi _{I,\sigma ;J,\sigma }$ is compatible with $M_{I,\sigma }$, $M_{J,\sigma }$, establishing some functorial equivalence (see Theorem \ref{t:2.10}). All this was motivated by the work of C. Jantzen \cite{J} (in particular \cite[Theorem 9.3]{J}, where he gives a bijection between irreducible objects of $\CC_{I;\sigma }\otimes \CC_{J;\sigma }$ and $\CC_{I\cup J;\sigma }$, satisfying certain properties that hold also in our situation). In fact, C. Jantzen considers ``real lines" instead of inertial orbits, which is more precise, but we recover also his result (cf. Remark 2.9 (i)).

Keep $\sigma $ fixed and consider now the set $S_{\sigma }$ of irreducible supercuspidal representations $\rho $ of general linear $p$-adic groups over $E$ such that the representation parabolically induced from $\sigma\otimes\rho $ (of the appropriate group $G_d$) is reducible. Denote by $\pi $ a complex smooth representation of a $p$-adic general linear group. We will say that $supp(\pi )\cap S_{\sigma }$ is empty or nonempty, if there are elements in $S_{\sigma }$ which are factors of representations in the supercuspidal support of $\pi $ or if there are no such representations in $S_{\sigma }$. (We leave it to the reader to give a precise definition to $supp(\pi )$, as we do not need it here.)

Using the above remarked relation with extended affine Hecke algebras of types $A$, $B$ and $D$, we prove by ``affine Hecke algebra''-arguments two results. Firstly, in section \ref{s:red}, we prove Theorem \ref{t:reducible2} which gives by Proposition \ref{p:reduc} the Hecke algebra counterpart of the  result of Lapid-Tadi\'c \cite[Theorem 1.1]{LT} that the representation induced by (normalized) parabolic induction from $\pi\otimes\sigma $ is always reducible if $supp(\pi)\cap S_{\sigma }\ne\emptyset$. This also implies the result of Lapid-Tadi\'c (cf. Remark \ref{r:reducibility}) (and conversely the result of Lapid-Tadi\'c implies this reducibility result for the appropriate extended affine Hecke algebras). Secondly, the main result of the paper is the verification of a conjecture by Lapid-Tadi\'c \cite{LT}, which says that the reducibility of $\pi\otimes\sigma $ does {\it not} depend on $\sigma $, whenever $supp(\pi)\cap S_{\sigma }=\emptyset$ (Theorem \ref{c:4.16}). To this end, we use Kato's construction \cite{Ka} of affine Hecke algebras of classical types using the exotic geometry, since this is particularly well adapted to the comparison of different specializations of the generic affine Hecke algebra.

\medskip

{\bf Acknowledgements.} D.C. was supported in part by the EPSRC grant EP/N033922/1 (2016).
V.H. has benefitted from a grant of Agence
Nationale de la Recherche with reference  ANR-13-BS01-0012 FERPLAY.
This paper was partly written while D.C. visited Aix-Marseille Universit\'e; he thanks the AMU Mathematics Institute for the hospitality and the support that made this work possible. Both authors participated in Spring 2017 in the special trimester on ``Representation Theory of Reductive Groups over local fields and applications to Automorphic forms" at the Weizmann Institute, Israel, where some of the ideas in this work were developed. The authors thank A. R. Aizenbud and D. Gourevitch for the invitation and support. They also thank E. Lapid for helpful remarks and for drawing their attention to \cite{LT} which motivated the material in section \ref{s:Bernstein}. Special thanks are due to the referee for several remarks that encouraged us to improve the results of the paper.

\section{Bernstein components and affine Hecke algebras}\label{s:Bernstein}

We keep the notation from the introduction.

\subsection{} In addition, we  denote by $\CO $ the union of the inertial orbit of a supercuspidal representation of a Levi subgroup of a general linear group with the inertial orbit of the (conjugate-)dual\footnote{This will be the contragredient representations, except if $G$ denotes a unitary group where one has to add conjugation by the non-trivial automorphism of $E/F$.}  of the supercuspidal representation, so that $\CO $ is stable under unramified twists and contragredients. We call $\CO $ a \it self-dual inertial orbit \rm and say that $\CO $ is homogeneous if it contains a representation of the form $\rho\otimes\cdots\otimes\rho$. The number of factors $\rho $ will then be called the length of $\CO $, denoted $l(\CO )$. We call the {\it support of }$\CO$, denoted $supp(\CO )$, the union of the inertial orbits of $\rho $ and of $\rho ^{\vee }$. Thus, the support is a self-dual inertial orbit of length $1$ in the homogeneous case. If $n$ is such that $\rho $ is a representation of $GL_n(E)$, then we write $n=n(\CO )$.

A set of supercuspidal representations of $GL_n(E)$, $n\geq 1$, stable under unramified twists and under conjugate-dual, can be uniquely written as a disjoint union of length $1$ self-dual inertial orbits.

Let $I$ be a set of length $1$ self-dual inertial orbits of (possibly different) general linear groups $GL_n(E)$, $n\in\Bbb N$, as in the introduction.
Let $\CO _1,\dots ,\CO_r$ be a finite set of homogeneous orbits with distinct support in $I$. We will put $\CO_1\otimes\cdots\otimes\CO _r:=\{\rho _1\otimes\cdots\otimes\rho _r\vert \rho_i\in\CO _i\}$ and $\CO_1\otimes\cdots\otimes\CO_r\otimes\sigma :=\{\rho_1\otimes\cdots\otimes\rho_r\otimes\sigma \vert\ \rho_i\in\CO_i \}$ (which will be the same as $\CO_1\otimes\cdots\otimes\CO _r$, when $G_{d_0}=1$). The sets $\CO:=\CO_1\otimes\cdots\otimes\CO _r$ and $\CO\otimes\sigma :=\CO_1\otimes\cdots\otimes\CO_r\otimes\sigma $ are a union of inertial orbits of a standard Levi subgroup of some $G_d$, $d\geq d_0$. If $G$ is not a general linear group, all of these inertial orbits give rise to the same Bernstein component of $\R (G_d)$. We denote it by $\R (G_d)_{\so\otimes\sigma }$. If $G$ is the general linear group, $\R (G_d)_{\so\otimes\sigma }$ will denote the finite union of the corresponding Bernstein components. Write $d=d(\CO\otimes\sigma )$. One has the formula \[d(\CO\otimes\sigma )=d_0+\sum _{i=1}^r n(\CO _i)l(\CO _i).\]

The following proposition is a direct consequence of the theory of the Bernstein center \cite{BD}:

\begin{proposition}\label{p:2.1}One has the equality
$$\CC_{I;\sigma }=\bigoplus _{r=1}^{\infty }\bigoplus_{\{\so _1,\dots ,\so _r\}} (\R (G_{d(\so_1\otimes\cdots\otimes\so_r\otimes\sigma )})_{\so_1\otimes\cdots\otimes\so_r\otimes\sigma })_{\fgen},$$
where the sum goes over finite sets of homogeneous orbits with distinct support in $I$ .
\end{proposition}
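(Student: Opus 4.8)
The plan is to unwind the definition of $\CC_{I;\sigma}$ and match it term-by-term with the Bernstein decomposition of each $\R(G_d)_{\fgen}$.

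First I would recall the setup: $\CC_{I;\sigma}$ is defined as the direct sum over $d \in \bN$ of the full subcategory of $\R(G_d)_{\fgen}$ consisting of those finitely generated representations all of whose irreducible subquotients admit a supercuspidal support of the form $\rho_1 \otimes \cdots \otimes \rho_r \otimes \sigma$ with each $\rho_i$ lying in some orbit $\CO_i \in I$. The strategy is to observe that this subcategory is, by the theory of the Bernstein center \cite{BD}, a direct summand of $\R(G_d)_{\fgen}$ cut out by a subset of the Bernstein spectrum, namely exactly those Bernstein components whose inertial support is of the stated form.

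Second, I would make the combinatorial identification precise. Given a representation in the supercuspidal support $\rho_1 \otimes \cdots \otimes \rho_r \otimes \sigma$, one can group together the factors $\rho_i$ that lie in a common length-$1$ self-dual orbit together with their duals; since $I$ is closed under contragredient (conjugate-dual), each $\rho_i$ lies in a unique orbit of $I$, and collecting repeated supports yields a homogeneous orbit $\CO_j$ of some length $l(\CO_j)$ with a well-defined support. Up to the action of the Weyl group of $G_d$ (which permutes the $GL$-blocks and sends blocks to their duals), the datum is exactly a finite set $\{\CO_1, \dots, \CO_s\}$ of homogeneous orbits with pairwise distinct support, drawn from $I$, and this set determines a single Bernstein component $\R(G_d)_{\CO_1 \otimes \cdots \otimes \CO_s \otimes \sigma}$ of $\R(G_d)$ (in the general linear case, a finite union of components), living in the group $G_d$ with $d = d(\CO_1 \otimes \cdots \otimes \CO_s \otimes \sigma) = d_0 + \sum n(\CO_i) l(\CO_i)$ by the displayed formula. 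Conversely every such datum $\{\CO_1,\dots,\CO_s\}$ arises this way. Hence the indexing sets on the two sides of the claimed equality are in natural bijection, and for each index the corresponding summand on the left (the full subcategory of $\R(G_d)_{\fgen}$ with that inertial support) coincides with the summand on the right, $(\R(G_d)_{\CO_1\otimes\cdots\otimes\CO_s\otimes\sigma})_{\fgen}$. Taking the direct sum over $r$ and over all such finite sets gives the equality of Proposition \ref{p:2.1}.

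The point requiring the most care is the bookkeeping of \emph{why distinct finite sets of homogeneous orbits give distinct (and orthogonal) Bernstein components, and why all irreducible subquotients — not merely the generators — have supercuspidal support of the required shape}: this is precisely where one invokes that a Bernstein component is closed under subquotients and that the inertial class of the supercuspidal support is locally constant on it, so that membership in $\CC_{I;\sigma}$ can equivalently be tested on a single irreducible object of each component. The subtlety that an irreducible subquotient could a priori have a support with some factor outside $\bigcup_{\CO \in I} \CO$ but still be a subquotient of something in $\CC_{I;\sigma}$ is ruled out exactly because supercuspidal support (inertial class) is constant on a Bernstein component. Once that is granted, the rest is the routine matching of indexing data described above, together with the observation that $\R(G_d)_{\fgen}$ is the direct sum of the $(\R(G_d)_{\mathfrak{s}})_{\fgen}$ over Bernstein components $\mathfrak{s}$, which is the Bernstein decomposition applied to finitely generated objects.
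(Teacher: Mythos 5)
Your argument is correct and is essentially the paper's own proof made explicit: the paper simply observes that the equality is a ``direct consequence of the theory of the Bernstein center [BD],'' and you have unfolded precisely what that means, namely the Bernstein decomposition of each $\R(G_d)_{\fgen}$ together with the bijection between finite sets of homogeneous orbits with distinct support in $I$ and the relevant inertial classes, using that inertial support is constant on a Bernstein component and that components are closed under subquotients.
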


\subsection{} Let now $\CO $ be a tensor product of a finite set of homogeneous orbits $\CO _i$ with distinct support, $\CO :=\CO _1\otimes\cdots\otimes\CO _r$. In \cite{H1}, we fixed a certain projective generator $\CP _{\so ,\sigma }$ in $\R (G_d)_{\so\otimes\sigma }$  following Bernstein, see \cite{Ro}. We will write $\CP _{\so ,\sigma }^{G_d}$, if we want to underline the corresponding group. (If $G$ is a general linear group, $\CP _{\so }^{G_d}$ will be the direct sum of the projective generators corresponding to the different Bernstein components inside $\R (G_d)_{\so }$.)

In fact, if $M_{\so\otimes\sigma }$ denotes the Levi subgroup of $G_{d(\so\otimes\sigma )}$ on which the representations in $\CO\otimes\sigma$ are defined, then $\CP _{\so ,\sigma }$ is parabolically induced from a certain projective generator in $\R(M_{\so \otimes\sigma })_{\so\otimes\sigma }$. In particular, the projective generator $\CP _{\so ,\sigma }$ of  $\R (G_{d(\so\otimes\sigma )})_{\so\otimes\sigma }$ depends on the ordering of the $\CO _i$, while the Bernstein component $\R (G_{d(\so )\otimes\sigma })_{\so\otimes\sigma }$ doesn't. However, one has

\begin{proposition} If $\{\CO _1',\dots ,\CO _r'\}=\{\CO _1,\dots ,\CO _r\}$, then, with $\CO '=\CO _1'\otimes\cdots\otimes\CO _r'\otimes \sigma$, $P_{\so ',\sigma }$ and $P_{\so ,\sigma }$ are isomorphic.
\end{proposition}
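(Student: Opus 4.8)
The plan is to reduce to a single transposition of two consecutive orbits and then to build the isomorphism from a standard intertwining operator, using crucially that two homogeneous orbits with distinct support never interact. Since the symmetric group $S_r$ is generated by adjacent transpositions and isomorphisms compose, it suffices to treat the case where $(\CO_1',\dots,\CO_r')$ is obtained from $(\CO_1,\dots,\CO_r)$ by interchanging two consecutive orbits $\CO_i$ and $\CO_{i+1}$. I would then use the description recalled above: $\CP_{\so,\sigma}=i_P^{G_d}(\CP^M)$, where $M=M_{\so\otimes\sigma}=L_1\times\cdots\times L_r\times G_{d_0}$ with $L_j=GL_{n(\CO_j)}^{\,l(\CO_j)}$ is the standard Levi on which $\CO\otimes\sigma$ lives, $P$ is the associated standard parabolic, and $\CP^M$, being a Bernstein progenerator on a product, is the external product $\CP^{L_1}_{\CO_1}\boxtimes\cdots\boxtimes\CP^{L_r}_{\CO_r}\boxtimes\CP_\sigma$ of the progenerators of the factors. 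For the reordered datum, $\CP_{\so',\sigma}=i_{P'}^{G_d}(\CP^{M'})$, where $M'=wMw\i$ for $w$ the element of the Weyl group of $G_d$ interchanging the blocks $L_i$ and $L_{i+1}$ and fixing the remaining factors, where $\CP^{M'}={}^w\CP^M$ under the induced identification $M'\cong M$, and where $P'$ is the standard parabolic with Levi $M'$.

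Conjugation by $w$ then gives a canonical isomorphism $i_P^{G_d}(\CP^M)\xrightarrow{\sim}i_{wPw\i}^{G_d}({}^w\CP^M)$. As $wPw\i$ and $P'$ have the common Levi $M'$, they are joined by the Harish-Chandra intertwining operator $J_{P'\mid wPw\i}$, a priori a rational family in the unramified twist parameter of maps $i_{wPw\i}^{G_d}({}^w\CP^M)\to i_{P'}^{G_d}({}^w\CP^M)=\CP_{\so',\sigma}$. The core step is to see that $J_{P'\mid wPw\i}$ is regular and bijective along the whole inertial orbit. I would factor it into rank-one operators indexed by the roots of $G_d$ separating $wPw\i$ from $P'$; moving $L_i$ past $L_{i+1}$ on the positive side, each such root pairs a general linear coordinate belonging to $\CO_i$ with one belonging to $\CO_{i+1}$ (together with the mirror roots on the negative coordinates), so the corresponding rank-one induced representation has the form $\tau_1\times\tau_2$ with $\tau_1$ a supercuspidal in $\CO_i$ and $\tau_2$ a supercuspidal in $\CO_{i+1}$. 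Since $\CO_i$ and $\CO_{i+1}$ have distinct support, $\tau_1$ and $\tau_2$ lie in distinct inertial classes, hence are not unramified twists of one another, so $\tau_1\times\tau_2$ is irreducible for every choice of twists (Bernstein--Zelevinsky). Consequently every rank-one factor, and with it $J_{P'\mid wPw\i}$, has neither a pole nor a kernel; being a regular morphism between two projective generators that is bijective at every point of the orbit, it is an isomorphism. Composing with the conjugation isomorphism yields $\CP_{\so,\sigma}\cong\CP_{\so',\sigma}$ for the transposition, and hence for an arbitrary reordering.

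I expect the last point to be the main obstacle: upgrading the meromorphic family $J_{P'\mid wPw\i}$ to an honest isomorphism of the infinite-dimensional progenerators. This hinges entirely on the absence of reducibility among the ``cross'' rank-one inductions, which is precisely what the distinct-support hypothesis supplies, and it should be run together with the explicit description of the standard intertwining operators and of $\End(\CP^M)$ from \cite{H1} (and with the elementary fact that a fibrewise-bijective regular morphism between two such families over the affine variety of unramified twists is invertible).
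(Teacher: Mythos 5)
Your proposal is correct and matches the paper's approach: the paper simply cites \cite[1.10 and section 3.1]{H1} for the fact that the intertwining operator permuting the $\CO_i$ is an isomorphism, and your argument reconstructs exactly the content of those cited results (reduction to an adjacent transposition, factorization of the Harish-Chandra operator into rank-one pieces, irreducibility of the cross rank-one inductions via the distinct-support hypothesis, and the fibrewise regularity/bijectivity argument to upgrade the rational family to an isomorphism of progenerators).
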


\begin{proof}By \cite[1.10 and section 3.1]{H1}, the intertwining operator permuting the $\CO_i$ is an isomorphism.
\end{proof}

\subsection{} To fix notations, we choose in this subsection for each permutation $\CO '=\CO _1'\otimes\cdots\otimes\CO _r'$ of $\CO$ an isomorphism between the projective generators $P_{\so ',\sigma }$ and $P_{\so ,\sigma }$. It induces an isomorphism between the corresponding endomorphism algebras. In addition, we fix an ordering on the length 1 self-dual inertial orbits of the general linear groups. This induces an ordering on any finite set $\CO _1,\dots ,\CO _r$ of homogeneous inertial orbits with distinct support.

For each $i$, one deduces from this choice an embedding $\End_{G_{d(\so _i\otimes\sigma )}}(\CP _{\so _i,\sigma })\hookrightarrow \End_{G_{d(\so\otimes\sigma )}}(\CP_{\so ,\sigma })$: recall first that $\CP _{\so _i,\sigma }$ and $\CP _{\so ,\sigma }$ are defined through parabolic induction from suitable Levi subgroups. Now, if $i=r$,  the inductions are all w.r.t. standard parabolic subgroups so that transitivity holds, and the above embedding follows by functoriality of parabolic induction. For general $i$, to get an embedding, one forms first a permutation $\CO ^{(i)}$ of $\CO $ by permuting $\CO _i$ successively with the $\CO _j$, $j>i$. Then, one has an embedding $\End_{G_{d(\so _i\otimes\sigma )}}(\CP _{\so _i,\sigma })\hookrightarrow \End_{G_{d(\so\otimes\sigma )}}(\CP_{\so^{(i)} ,\sigma })$ given by parabolic induction as above. When conjugating this embedding by the isomorphism $\End_{G_{d(\so\otimes\sigma )}}(\CP_{\so  ,\sigma })\longrightarrow \End_{G_{d(\so\otimes\sigma )}}(\CP_{\so^{(i)} ,\sigma })$ fixed above, one gets our embedding.

\begin{proposition}\label{p:2.3}After the above identifications, one has a canonical isomorphism
$$\End_{G_{d(\so\otimes\sigma )}}(\CP_{\so ,\sigma })\simeq\bigotimes_{i=1}^r \End_{G_{d(\so _i\otimes\sigma )}}(\CP _{\so _i,\sigma }).$$
\end{proposition}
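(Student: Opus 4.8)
The plan is to build the isomorphism factor by factor, using the structure of the $\CP_{\so,\sigma}$ as representations parabolically induced from a Levi subgroup $M_{\so\otimes\sigma}$, and then to identify both sides as endomorphism algebras of the "same" induced module. First I would reduce to the case where the ordering on the $\CO_i$ is the standard one: by the proposition just proved, permuting the orbits gives isomorphic projective generators, and the identifications chosen in the previous subsection make these isomorphisms compatible with the embeddings $\End_{G_{d(\so_i\otimes\sigma)}}(\CP_{\so_i,\sigma})\hookrightarrow \End_{G_{d(\so\otimes\sigma)}}(\CP_{\so,\sigma})$; so it suffices to exhibit the tensor product decomposition after fixing, say, $i=r$ and inducting downward. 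The key geometric input is the description in \cite{H1} (see \cite[1.10, section 3.1]{H1}) of $\End_{G_{d(\so\otimes\sigma)}}(\CP_{\so,\sigma})$ in terms of the relevant Weyl group data and the intertwining operators, which shows that the endomorphism algebra of the induced projective generator is governed by the stabilizer of the inertial class, and this stabilizer splits as a product over the distinct supports $\supp(\CO_i)$ precisely because the $\CO_i$ have \emph{distinct} support.

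Concretely, I would argue as follows. Write $\CP_{\so,\sigma} = i_{P}^{G}(\CP')$ where $P$ is the standard parabolic with Levi $M = M_{\so_1}\times\cdots\times M_{\so_r}\times G_{d_0}$ and $\CP'$ is the external tensor product of the projective generators on the $\R(M_{\so_i})_{\so_i}$ factors with the generator attached to $\sigma$ on $G_{d_0}$; this is the Bernstein-style generator recalled in the previous subsection. By transitivity of parabolic induction and the compatibility of the construction of $\CP_{\so_i,\sigma}$ with standard inductions, each $\CP_{\so_i,\sigma}$ is $i_{P_i}^{G_{d(\so_i\otimes\sigma)}}$ applied to the tensor product of the $\so_i$-generator with the $\sigma$-generator. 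Then $\End_G(i_P^G \CP')$ is computed, following Bernstein–Deligne and \cite{H1}, as the algebra of intertwining operators indexed by the elements of the stabilizer $W_{\so\otimes\sigma}$ of the cuspidal datum in the relevant (relative) Weyl group, with multiplication twisted by cocycles coming from the normalized intertwining operators. The point is that, since the supports $\supp(\CO_i)$ are pairwise distinct, no Weyl group element can move a factor in $\CO_i$ to a factor in $\CO_j$ for $i\ne j$; hence $W_{\so\otimes\sigma}$ decomposes as the direct product $\prod_{i=1}^r W_{\so_i\otimes\sigma}$ (where $W_{\so_i\otimes\sigma}$ is the stabilizer relevant to $\CP_{\so_i,\sigma}$), and the normalizing factors, being local at each root datum, are multiplicative across the factors. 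This yields the algebra isomorphism $\End_G(\CP_{\so,\sigma})\simeq\bigotimes_i \End_{G_{d(\so_i\otimes\sigma)}}(\CP_{\so_i,\sigma})$, and one checks that under this isomorphism the $i$-th factor maps onto the image of the embedding fixed in the previous subsection — that is the content of the word "canonical."

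The main obstacle, I expect, is the bookkeeping needed to verify that the isomorphism really is \emph{canonical}, i.e. independent of the auxiliary choices (the chosen isomorphisms between the $\CP_{\so',\sigma}$ for different orderings, and the chosen ordering on self-dual inertial orbits), and that it is compatible with the embeddings $\End_{G_{d(\so_i\otimes\sigma)}}(\CP_{\so_i,\sigma})\hookrightarrow\End_{G_{d(\so\otimes\sigma)}}(\CP_{\so,\sigma})$ constructed via the zig-zag permutation $\CO^{(i)}$. This requires showing that the cocycle twisting the multiplication on $\End_G(\CP_{\so,\sigma})$ restricts trivially across the factors — equivalently, that the normalized intertwining operators attached to elements of $W_{\so_i\otimes\sigma}$ and $W_{\so_j\otimes\sigma}$ commute on the nose after the chosen normalizations — which ultimately comes down to a Mackey-theory / transitivity-of-intertwining-operators computation together with the multiplicativity of Harish-Chandra's $\mu$-functions (or Plancherel densities) across distinct supports, already available from \cite{H1}. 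Once that compatibility is in hand, the displayed isomorphism of Proposition \ref{p:2.3} follows. (The reducibility consequences stated in the introduction, and the comparison with Kato's exotic geometry, lie downstream and play no role here.)
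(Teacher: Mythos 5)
Your argument is essentially the right one and identifies the correct underlying reason: since the supports of the $\CO_i$ are pairwise distinct, no element of the relevant relative Weyl group can move a factor from $\CO_i$ to $\CO_j$ for $i\neq j$, so the stabilizer of the cuspidal datum splits as a product over the $\CO_i$, and the intertwining operators attached to distinct factors commute. The paper reaches the same conclusion, but by a more direct route: rather than going through the general Bernstein--Deligne description of $\End_G(\CP_{\so,\sigma})$ as a cocycle-twisted group algebra of the stabilizer (and then worrying, as you do, about whether the cocycle trivializes across factors and whether the $\mu$-function normalizations are multiplicative), it simply invokes the \emph{explicit presentation} already established in \cite[5.10]{H1} (extended in \cite[A.6, C.5]{H3}), namely $\End_{G_{d(\so\otimes\sigma)}}(\CP_{\so,\sigma})=\bigoplus_{w,r}B_{\so}T_wJ_r$ with $w\in W(\CO)$, $r\in R_\so$. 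From that presentation the factorization is immediate once one notes that the root system $\Sigma_\so$, the $R$-group $R_\so$, and the abelian algebra $B_\so$ each split as a direct sum (or tensor product) over the $\CO_i$ with distinct support, and that the generators $T_{w_i}$, $J_{r_i}$, $b_i$ attached to $\CO_i$ commute with those attached to $\CO_{i'}$ for $i'\neq i$. In particular, the cocycle issue you flag as ``the main obstacle'' is already absorbed into the presentation of \cite[5.10]{H1} and needs no separate verification; similarly, the $R$-group contribution (nontrivial precisely when $\Sigma_{\so_i}$ is of type $D$), which your sketch does not single out, is handled explicitly. So your proposal is correct in substance but carries avoidable overhead; the paper's use of the concrete $B_\so T_wJ_r$ basis is what makes the proof a few lines rather than a Mackey-theoretic computation.
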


\begin{remark}
\begin{enumerate}
\item[(i)] This result  does not apply to the even special orthogonal group. (An erratum for a certain related statement in \cite{H1} has been made in \cite[end of appendix A]{H3}.)

\item[(ii)] For any permutation $\CO'=\CO_1'\otimes\cdots\otimes\CO _r'$, one has a canonical isomorphism of $\bigotimes_{i=1}^r \End_{G_{d(\so _i'\otimes\sigma )}}(\CP _{\so _i',\sigma })$ and        $\bigotimes_{i=1}^r \End_{G_{d(\so _i\otimes\sigma )}}(\CP _{\so _i,\sigma })$, whence an isomorphism with $\End_{G_{d(\so\otimes\sigma )}}(\CP_{\so ,\sigma })$. This is the isomorphism that will be chosen in the sequel.
\end{enumerate}
\end{remark}

\begin{proof}The algebra $\End_{G_{d(\sso\otimes\sigma )}}(\CP_{\so ,\sigma })$ is generated by operators $T_w$ and $J_r$ and an algebra of scalar operators $B_{\sso }$ (see \cite[5.10]{H1} and also \cite[A.6 and C.5]{H3} for the full orthogonal and unitary group). More precisely, \[\End_{G_{d(\so\otimes\sigma )}}(\CP_{\so ,\sigma })=\bigoplus _{w,r} B_{\sso}T_wJ_r,\]
where $w$ runs over the Weyl group $W(\CO )$ of a root system $\Sigma _{\so }$ associated to the orbit $\CO $, and $r$ over an $R$-group $R_{\so }$ associated to $\CO $. The root system $\Sigma _{\so }$ and the corresponding based root datum are a direct sum of irreducible root systems $\Sigma _{\so _i}$ associated to the orbits $\CO _i$, so that $W(\CO )$ is the product of the Weyl groups $W(\CO _i)$. The $R$-group $R_{\so }$ is the product of the $R$-groups $R_{\so _i}$ with $R_{\so _i}$ non trivial if and only if $\Sigma _{\so _i}$ is of type $D$ and then $R_{\so _i}$ is of order $2$ generated by the outer automorphism of $\Sigma _{\so _i}$. The group algebra $B_{\so }$ is the group algebra of the lattice of the root datum associated to $\CO $. Consequently, $B_{\so }=\bigotimes _iB_{\so _i}$. The operators $T_{w_i}$, $J_{r_i}$ and $b_i$ with $w_i\in W(\CO _i)$, $r_i\in R_{\so _i}$ and $b_i\in B_{\so _i}$ commute with operators associated to any $\CO _{i'}$ with $i\ne i'$ and generate a subalgebra isomorphic to $\End_{G_{d(\sso _i,\sigma )}}(\CP _{\so _i,\sigma })$ by the above embedding.
\end{proof}

\begin{theorem}\label{t:2.5}The category $\R (G_{d(\so\otimes\sigma )})_{\so\otimes\sigma }$ is equivalent to the category of right modules over the algebra
\begin{equation}\label{e:big-algebra}
\bigotimes_{i=1}^r \End_{G_{d(\sso _i,\sigma )}}(\CP _{\so _i,\sigma }).
\end{equation}
The equivalence of category is given by $V\mapsto Hom_G(\CP _{\so ,\sigma },V)$, where the algebra $\bigotimes_{i=1}^r \End_{G_{d(\sso _i,\sigma )}}(\CP _{\so _i,\sigma })$ acts via the isomorphism with $\End_{G_{d(\sso\otimes\sigma )}}(\CP_{\so ,\sigma })$.

It is compatible with parabolic induction, the Jacquet functor and preserves temperedness and discrete series.
\end{theorem}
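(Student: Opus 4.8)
The plan is to assemble the statement from three ingredients: the general fact that a block of $\R(G_d)$ is equivalent to the category of modules over the endomorphism ring of any of its progenerators, Proposition \ref{p:2.3}, and the fine structure of $\CP_{\so,\sigma}$ worked out in \cite{H1}. By its construction (following Bernstein, cf.\ \cite{Ro}), $\CP_{\so,\sigma}$ is a projective generator of the block $\R(G_{d(\so\otimes\sigma)})_{\so\otimes\sigma}$; hence the functor $V\mapsto\Hom_{G_{d(\so\otimes\sigma)}}(\CP_{\so,\sigma},V)$ is an equivalence of $\R(G_{d(\so\otimes\sigma)})_{\so\otimes\sigma}$ with the category of right $\End_{G_{d(\so\otimes\sigma)}}(\CP_{\so,\sigma})$-modules, with quasi-inverse $M\mapsto M\otimes_{\End(\CP_{\so,\sigma})}\CP_{\so,\sigma}$. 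Transporting the module category along the canonical ring isomorphism of Proposition \ref{p:2.3} then gives the asserted equivalence with right modules over $\bigotimes_{i=1}^r\End_{G_{d(\sso_i,\sigma)}}(\CP_{\so_i,\sigma})$, this algebra acting through that isomorphism.

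For compatibility with parabolic induction and the Jacquet functor I would use that $\CP_{\so,\sigma}$ is itself obtained by parabolic induction: for a splitting $\so=\so'\otimes\so''$ of the set of orbits that respects the fixed ordering, with $P=MN$ the corresponding standard parabolic subgroup and $\CP^M$ the external tensor product of the progenerators attached to the factors of $M$, one has $\Ind_P^G\CP^M\cong\CP_{\so,\sigma}$, and the embedding $\End_M(\CP^M)\hookrightarrow\End_G(\CP_{\so,\sigma})$ fixed in the subsection before Proposition \ref{p:2.3} is exactly the one induced by parabolic induction, realising $\End_M(\CP^M)$ as the parabolic subalgebra of $\End_G(\CP_{\so,\sigma})$ attached to the sub-root-datum $\Sigma_{\so'}\subseteq\Sigma_{\so}$. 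Frobenius reciprocity, together with Bernstein's second adjointness theorem to handle $r_P\circ\Ind_P^G$, then yields a natural isomorphism
\[\Hom_G\bigl(\CP_{\so,\sigma},\Ind_P^G(-)\bigr)\ \cong\ \Hom_M(\CP^M,-)\otimes_{\End_M(\CP^M)}\End_G(\CP_{\so,\sigma})\]
of functors into right $\End_G(\CP_{\so,\sigma})$-modules, while dually the normalized Jacquet functor corresponds to restriction of scalars along the same embedding; through Proposition \ref{p:2.3} these become induction and restriction of Hecke-algebra modules. The verifications are those of \cite[1.10, 3.1 and 5]{H1}, with \cite[Annexes A and C]{H3} replacing the relevant parts of \cite{H1} for the full orthogonal and the unitary group.

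For temperedness and discrete series I would invoke the characterisation, established in \cite{H1}, of the tempered (resp.\ square-integrable) objects of the block as those whose image under $\Hom_G(\CP_{\so,\sigma},-)$ satisfies a Casselman-type inequality on the weights for the commutative subalgebra $B_{\so}=\bigotimes_iB_{\so_i}$ of $\End_G(\CP_{\so,\sigma})$ --- the non-strict inequality for tempered, the strict one for discrete series. Since this condition refers only to the algebra together with its distinguished commutative subalgebra and is manifestly compatible with the tensor factorisation, the equivalence, together with the identification of Proposition \ref{p:2.3}, preserves both classes.

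The categorical equivalence and the tensor factorisation are essentially formal once Proposition \ref{p:2.3} is available; the main obstacle is the two compatibility statements. Matching $\Ind_P^G$ and the Jacquet functor with the Hecke-theoretic induction and restriction requires computing the relevant intertwining operators explicitly in terms of the generators $T_w$, $J_r$ and $B_{\so}$ of \cite[5.10]{H1}, and matching the analytic notion of temperedness (growth of matrix coefficients / Casselman's criterion) with its algebraic counterpart on the Hecke side is the genuinely non-formal input; in the present account both are imported from \cite{H1} (and \cite{H3}).
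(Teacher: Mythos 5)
Your proposal is correct and takes essentially the same route as the paper: Morita equivalence from the projective generator combined with Proposition~\ref{p:2.3} for the tensor decomposition of the endomorphism ring, the compatibility of parabolic induction and Jacquet functor with the progenerator equivalence as in \cite[section 5]{Ro} (see also \cite[7.9]{H1}), and a Casselman-type weight criterion on the Hecke side for temperedness and discrete series. The only small slip is in attribution: the preservation of temperedness/discrete series is established in \cite{H2} (extended to unitary groups via \cite[Appendix C]{H3}), not in \cite{H1}.
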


\begin{remark}\label{r:2.6} It follows from \cite[Proposition 6.1]{H1} and \cite[A.2, C.5]{H3}, that the tensor product factors of the  algebra (\ref{e:big-algebra}) are either affine Hecke algebras of types $A$ or $B$  or the semidirect product of an affine Hecke algebra of type $D$ by the group algebra of a cyclic group of order 2.
\end{remark}

\begin{proof}The first part follows from Proposition \ref{p:2.3} by Morita equivalence, as $\CP _{\so ,\sigma }$ is a projective generator in $\R (G_{d(\so\otimes\sigma )})_{\so\otimes\sigma }$.  The compatibility with parabolic induction and Jacquet functor is a consequence of \cite[section 5]{Ro} (see also  \cite[7.9]{H1}), while preservation of temperedness and discrete series was considered in \cite{H2} (properly, \cite{H2} did not treat unitary groups, but the proof in \cite{H2} generalizes to the setting of unitary groups, taking into account \cite[Appendix C]{H3}). \end{proof}

\subsection{} Applying Theorem \ref{t:2.5} to the Proposition \ref{p:2.1}, one gets:

\begin{corollary}\label{c:2.6}The category $\CC_{I;\sigma }$ is equivalent to the category of finitely generated right modules over the algebra
$$\bigoplus _{r=1}^{\infty }\bigoplus_{\{\so _1,\dots ,\so _r\}}\bigotimes_{i=1}^r \End_{G_{d(\sso _i,\sigma )}}(\CP _{\so _i,\sigma }),$$
where the $\CO _i$ denote homogeneous orbits with distinct support in $I$.

This equivalence of categories sends a summand $V$ in $(\R (G)_{\so _1\otimes\dots\otimes\so _r,\sigma })_{\fgen}$ to the right $\bigotimes _{i=1}^r\End_G(\CP _{\so _i,\sigma })$-module $Hom_G(\CP_{\so _1\otimes\dots\otimes\so _r,\sigma },V)$. It is compatible with parabolic induction, the Jacquet functor and preserves temperedness and discrete series\end{corollary}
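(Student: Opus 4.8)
The plan is to obtain the statement by feeding Theorem~\ref{t:2.5} into the block decomposition of Proposition~\ref{p:2.1}. By Proposition~\ref{p:2.1}, $\CC_{I;\sigma}$ is the direct sum, over $r\geq 1$ and over finite sets $\{\CO_1,\dots,\CO_r\}$ of homogeneous orbits with distinct support in $I$, of the categories $(\R(G_{d(\CO_1\otimes\cdots\otimes\CO_r\otimes\sigma)})_{\CO_1\otimes\cdots\otimes\CO_r\otimes\sigma})_{\fgen}$. For one such datum, first I would order the $\CO_i$ by the fixed ordering on length $1$ self-dual inertial orbits, set $\CO=\CO_1\otimes\cdots\otimes\CO_r$, and apply Theorem~\ref{t:2.5}: the functor $V\mapsto\Hom_G(\CP_{\CO,\sigma},V)$ is an equivalence of $\R(G_{d(\CO\otimes\sigma)})_{\CO\otimes\sigma}$ with the category of right modules over $\bigotimes_{i=1}^r\End_G(\CP_{\CO_i,\sigma})$, the tensor-product algebra acting through the canonical isomorphism of Proposition~\ref{p:2.3} with $\End_{G_{d(\CO\otimes\sigma)}}(\CP_{\CO,\sigma})$ fixed in the remark following that proposition (which makes the outcome independent of the ordering of the $\CO_i$). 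Since $\CP_{\CO,\sigma}$ is a projective generator, this equivalence carries finitely generated representations to finitely generated modules and conversely, so it restricts to an equivalence between the finitely generated subcategories on the two sides.

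Next I would take the direct sum of these block equivalences. The one point needing attention is the meaning of ``finitely generated'' across the infinite direct sum: a finitely generated object of $\CC_{I;\sigma}$ is a finite sum of finitely generated representations each lying in a single block, hence is supported in finitely many blocks, and symmetrically a finitely generated right module over the non-unital algebra $\bigoplus_{r=1}^{\infty}\bigoplus_{\{\CO_1,\dots,\CO_r\}}\bigotimes_{i=1}^r\End_G(\CP_{\CO_i,\sigma})$ is supported in finitely many summands. Hence the direct sum of the block equivalences is an equivalence between $\CC_{I;\sigma}$ and the category of finitely generated right modules over that algebra, and by construction it sends a summand $V\in(\R(G)_{\CO_1\otimes\cdots\otimes\CO_r,\sigma})_{\fgen}$ to $\Hom_G(\CP_{\CO_1\otimes\cdots\otimes\CO_r,\sigma},V)$.

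Finally, compatibility with parabolic induction and the Jacquet functor, and preservation of temperedness and of the discrete series, hold for each block equivalence by Theorem~\ref{t:2.5}, so they pass to the direct sum. I do not expect a genuine obstacle here: the care required is limited to the bookkeeping of orderings and of the identifications of Proposition~\ref{p:2.3} --- arranged exactly so that the tensor-product algebra attached to a block is well defined regardless of the order of the $\CO_i$ --- together with the elementary finiteness observation that glues the block-wise equivalences into a single one.
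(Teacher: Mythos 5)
Your proof is correct and follows exactly the route the paper intends: the paper introduces Corollary~\ref{c:2.6} with the single line that it is obtained by ``Applying Theorem~\ref{t:2.5} to the Proposition~\ref{p:2.1},'' and your argument simply spells out that application, including the harmless bookkeeping about orderings fixed via Proposition~\ref{p:2.3} and about finite generation across the infinite direct sum. Nothing is missing; the extra care you take with the non-unital direct-sum algebra is the right way to make the elided step rigorous.
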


\subsection{} We will now apply the theory of the tensor product of abelian categories as established in \cite{D} and used in \cite{H3}.

\begin{corollary}\label{c:2.7}
If $I$ and $J$ are disjoint, then one has an equivalence of category
$$\Phi _{I,\sigma ;J,\sigma }:\CC_{I;\sigma }\otimes \CC_{J;\sigma }\rightarrow \CC_{I\cup J;\sigma },$$
which is compatible with parabolic induction, the Jacquet functor, (conjugate)-dual and preserves temperedness and discrete series.
\end{corollary}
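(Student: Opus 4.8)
The plan is to transport the statement across the equivalences of Corollary~\ref{c:2.6} and then use that, over a tensor product of algebras, the category of finitely generated modules is the Deligne tensor product of the two module categories. Write $A_I:=\bigoplus_{r\geq 1}\bigoplus_{\{\CO_1,\dots,\CO_r\}}\bigotimes_{i=1}^r\End_{G_{d(\CO_i,\sigma)}}(\CP_{\CO_i,\sigma})$ for the algebra attached to $I$ in Corollary~\ref{c:2.6} (inner sum over finite sets of homogeneous orbits with distinct support in $I$), and define $A_J$, $A_{I\cup J}$ analogously. First I would record an algebra isomorphism $A_{I\cup J}\simeq A_I\otimes_{\bC}A_J$: since $I$ and $J$ are disjoint and the support of a homogeneous orbit is a single length~$1$ self-dual inertial orbit, a finite set of homogeneous orbits with distinct support in $I\cup J$ is exactly a pair consisting of one such set in $I$ and one in $J$ (the cross distinct-support condition being automatic). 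Using the fixed ordering of length~$1$ self-dual inertial orbits from the subsection preceding Proposition~\ref{p:2.3} to interleave the tensor factors, and Proposition~\ref{p:2.3} together with the remark following it to make the identifications canonical, this yields the desired isomorphism.

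Next, by Deligne's theory of the tensor product of abelian categories (\cite{D}, as used in \cite{H3}), there is a canonical equivalence $\mathrm{mod}_{\fgen}(A_I)\otimes\mathrm{mod}_{\fgen}(A_J)\simeq\mathrm{mod}_{\fgen}(A_I\otimes_{\bC}A_J)$, with $M\otimes N\mapsto M\otimes_{\bC}N$. Composing this with the equivalences $\CC_{I;\sigma}\simeq\mathrm{mod}_{\fgen}(A_I)$, $\CC_{J;\sigma}\simeq\mathrm{mod}_{\fgen}(A_J)$, $\CC_{I\cup J;\sigma}\simeq\mathrm{mod}_{\fgen}(A_{I\cup J})$ of Corollary~\ref{c:2.6} and with the algebra isomorphism of the previous step defines $\Phi_{I,\sigma;J,\sigma}$ and exhibits it as an equivalence of categories.

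It remains to check the compatibilities, and this is where I expect the real work to lie. After transport via Corollary~\ref{c:2.6}, parabolic induction and the Jacquet functor are realized by induction and restriction along the algebra embeddings $\End_{G_{d(\CO_i,\sigma)}}(\CP_{\CO_i,\sigma})\hookrightarrow\End_{G_{d(\CO,\sigma)}}(\CP_{\CO,\sigma})$ of the subsection before Proposition~\ref{p:2.3}; these were built to respect the tensor-product decomposition, so $\Phi_{I,\sigma;J,\sigma}$ should intertwine them. The conjugate-dual preserves each direct summand (each $\CO_i$ being self-dual) and acts factorwise on $\bigotimes_i\End_{G_{d(\CO_i,\sigma)}}(\CP_{\CO_i,\sigma})$, hence is intertwined as well; and since by Remark~\ref{r:2.6} each tensor factor is an (extended) affine Hecke algebra, temperedness and square-integrability of a module are detected through the action of the commutative part of a Bernstein presentation, so a module over $A_I\otimes_{\bC}A_J$ (equivalently over $A_{I\cup J}$) is tempered, resp.\ discrete series, if and only if each of its two factors is, whence $\Phi_{I,\sigma;J,\sigma}$ preserves both. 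The main obstacle is the coherence of all the choices fixed in Section~\ref{s:Bernstein}: one must verify that the orderings, the isomorphisms $\CP_{\CO',\sigma}\simeq\CP_{\CO,\sigma}$ attached to permutations, and the induced embeddings of endomorphism algebras are simultaneously compatible with parabolic induction, with the Jacquet functor and with the conjugate-dual, so that interleaving the tensor factors of $A_I$ and $A_J$ inside $A_{I\cup J}$ genuinely intertwines all of these operations; here one uses the explicit description of $\End_{G_{d(\CO,\sigma)}}(\CP_{\CO,\sigma})$ via the generators $T_w$, $J_r$ and the scalar algebra $B_{\CO}$, and the fact that operators attached to different orbits commute (Proposition~\ref{p:2.3}).
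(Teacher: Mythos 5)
Your proposal follows the same overall strategy as the paper: transport the statement through the equivalences of Corollary~\ref{c:2.6}, observe that disjointness of $I$ and $J$ gives an algebra isomorphism $\B_{I\cup J,\sigma}\simeq\B_{I,\sigma}\otimes\B_{J,\sigma}$, and then invoke the tensor-product formalism of \cite{D}. The algebra isomorphism step and the reduction to module categories are correct and match the paper.

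However there is a genuine gap at the crucial point where you appeal to ``Deligne's theory of the tensor product of abelian categories'' to assert a canonical equivalence $\mathrm{mod}_{\fgen}(A_I)\otimes\mathrm{mod}_{\fgen}(A_J)\simeq\mathrm{mod}_{\fgen}(A_I\otimes_{\bC}A_J)$. The Deligne tensor product of abelian categories does not exist in general, and the setting of \cite[Proposition 5.3]{D} that the paper invokes requires the rings to be coherent. Here $\B_{I,\sigma}$ and $\B_{J,\sigma}$ are infinite direct sums of tensor products of (extended) affine Hecke algebras, so coherence is not obvious and must actually be supplied: the paper writes each of these algebras as an increasing union $\bigcup_N \bigoplus_{r\leq N}\cdots$ of finite direct sums, which are coherent by \cite[B.2]{H3}, and then appeals to \cite[B.5]{H3} for the existence of the tensor product and for the identification of $\CC_{I;\sigma}\otimes\CC_{J;\sigma}$ as the categorical union of the corresponding finitely generated full subcategories. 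Your proposal asserts existence of the tensor product for free and thereby skips the step that this whole construction is designed to justify; this is precisely why the corollary is stated for $\fgen$-categories (see Remark~2.9(iii)). In addition, for the compatibilities you give a heuristic argument and yourself flag a ``main obstacle'' about coherence of the choices of Section~\ref{s:Bernstein} without resolving it; in the paper these compatibilities are not re-derived but follow directly from Corollary~\ref{c:2.6} (and, for the conjugate-dual, from the observation that duality is read off the supercuspidal support). You should replace the unconditional invocation of Deligne's tensor product by the coherence argument, and route the compatibility assertions through Corollary~\ref{c:2.6} rather than trying to re-verify them at the level of generators.
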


\begin{remark} (i) C. Jantzen proved in \cite[Theorem 9.3]{J} the underlying bijection of irreducible representations, satisfying a list of properties. These properties are consequences of the exactness of a functor of equivalence of categories and compatibility with parabolic induction and Jacquet functor (using among others the definition of Aubert-duality in the context of affine Hecke algebras). Of course, by exactness, one has also an equivalence between the subcategories of finite length representations.

In addition, C. Jantzen considered the more precise setting of \rm real lines \it and not whole inertial orbits, meaning that $\CO $ is replaced by the set $\CO _{\Bbb R}$ of twists by \rm real valued \it unramified characters of a given irreducible supercuspidal representation and its (conjugate-)dual. Using Lusztig's reduction theorem (see Theorem \ref{t:reduction} in this paper) and preservation of Jacquet modules, Corollary \ref{c:2.7}  and its proof remain however valid, when $I$ and $J$ are distinct sets formed by real lines, getting the precise situation considered by C. Jantzen in \cite{J}.

(ii) The preservation of unitarity is more difficult, as presently it is only a conjecture that the equivalence of category between smooth representations of $p$-adic groups and modules over affine Hecke-algebras as given by Bernstein's progenerators preserves unitarity.

(iii) The restriction to the category of finitely generated representations is due to the notion of tensor product for categories of modules over coherent rings as defined in \cite[Proposition 5.3]{D}, where this restriction is made. \end{remark}

\begin{proof}The category on the right is equivalent to the category of finitely generated right modules over the algebra
\begin{equation}\label{e:star}
\bigoplus _{t=1}^{\infty }\bigoplus_{\{\so _1,\dots ,\so _t\}}\bigotimes_{i=1}^t \End_{G_{d(\so _i\otimes\sigma )}}(\CP _{\so _i,\sigma }),
\end{equation}
where the $\CO _i$ denote self-dual inertial orbits of length $1$ with distinct support in $I\cup J$. We can write this also as
$$\bigoplus _{r=1}^{\infty }\bigoplus _{s=1}^{\infty }\bigoplus_{\{\so _1,\dots ,\so _r\}}\bigoplus_{\{\so '_1,\dots ,\so '_s\}}\bigotimes_{i=1}^r\End_{G_{d(\so _i\otimes\sigma )}}(\CP _{\so _i,\sigma })\otimes\bigotimes_{j=1}^s\End_{G_{d(\so _j'\otimes\sigma )}}(\CP _{\so _j',\sigma }),$$ where the $\CO _i$'s have support in $I$ and the $\CO _j$' have support in $J$. This is isomorphic to $$(\bigoplus _{r=1}^{\infty }\bigoplus_{\{\so _1,\dots ,\so _r\}}\bigotimes_{i=1}^r\End_{G_{d(\so _i\otimes\sigma )}}(\CP _{\so _i,\sigma }))\otimes (\bigoplus _{s=1}^{\infty }\bigoplus_{\{\so '_1,\dots ,\so '_s\}}\bigotimes_{j=1}^s\End_{G_{d(\so _j'\otimes\sigma )}}(\CP _{\so _j',\sigma })).$$
The category of finitely generated right modules over each of the factor of the above tensor product is respectively equivalent to $\CC_{I;\sigma }$ and $\CC_{J;\sigma }$. It remains to show that the category of finitely generated modules over the tensor product is equivalent to $\CC_{I;\sigma }\otimes \CC_{J;\sigma }$ (and in particular, that this tensor product exists).

In fact, each side of the tensor product can be respectively rewritten as $$\bigcup_{N=1}^{\infty}\bigoplus _{r=1}^N\bigoplus_{\{\so _1,\dots ,\so _r\}}\bigotimes_{i=1}^r\End_{G_{d(\so _i\otimes\sigma )}}(\CP _{\so _i,\sigma })$$ and $$\bigcup_{M=1}^{\infty}\bigoplus _{s=1}^M\bigoplus_{\{\so _1',\dots ,\so _s'\}}\bigotimes_{j=1}^s\End_{G_{d(\so _j'\otimes\sigma )}}(\CP _{\so _j',\sigma }),$$ where the algebras over which the union is taken are considered to be subalgebras of each others. This is a union over coherent algebras by \cite[B.2]{H3}, so that the tensor product exists \cite[B.5]{H3} and $\CC_{I;\sigma }\otimes \CC_{J;\sigma }$ is the categorical union of the corresponding finitely generated full subcategories.

The last part of the corollary is then immediate by corollary \ref{c:2.6}, except the assertion about the (conjugate-)dual. But this is obvious, as (conjugate-)dual passes to the supercuspidal support.\end{proof}

\begin{remark} Denoting $\B _{I\cup J,\sigma }$ the algebra defined in (\ref{e:star}), the proof of the  corollary is a consequence of the isomorphism $\B _{I\cup J,\sigma }\simeq\B _{I,\sigma }\otimes \B _{J,\sigma }$ explained above. For further reference, let us write $\B^{GL}_I$ if $G$ is a general linear group.
\end{remark}

\subsection{}Remark that parabolic induction defines a functor $M_{I,\sigma }:\CC_{I}^{GL}\otimes\CC_{I;\sigma }\rightarrow \CC_{I;\sigma }$.

Let $\CO _1,\dots ,\CO _r$ be homogeneous orbits in $I$ and write $\CO _i=\CO_i'\otimes\CO_i''$ with $\CO_i'$ or $\CO_i''$ possibly empty (in that case $\CO _i$ equals the non empty factor). Put $n'=\sum _il(\CO' _i)n(\CO' _i)$, $\CO '=\CO _1'\otimes\cdots\otimes\CO _r'$ and similar $n, n''$ and $\CO $, $\CO ''$. Write $d=d_0+n$ and $d''=d_0+n''$.

Then $\CO $, $\CO'$ and $\CO ''$ correspond respectively to  Bernstein components in $\R(G_d)$, $\R (GL_{n'}(E))$ and $\R (G_{d''})$. Let us denote by $\CP _{\so ,\sigma }^{G_d}$, $\CP _{\so '}^{GL_{n'}}$ and $\CP _{\so '',\sigma }^{G_{d''}}$ our projective generators in these Bernstein components.

The functor $M_{I,\sigma }$ restricted to $(\R (GL_{n'}(E)_{\so '})_{\fgen}\otimes (\R (G_{d''})_{\so ''\otimes\sigma })_{\fgen}$ sends an object $V$ to the corresponding parabolic induced representation in $(\R (G_{d})_{\so \otimes\sigma })_{\fgen}$.

Put $\A _{I,\sigma }=\B^{GL}_I\otimes\B _{I,\sigma }$. As remarked in the proof of Corollary \ref{c:2.7}, the category  $\CC_{I;\sigma }$ is equivalent to the category of finitely generated right $\B _{I,\sigma }$-modules, and by this one sees that the category $\CC_{I}^{GL}\otimes\CC_{I;\sigma }$ is equivalent to the category of finitely generated right-$\A _{I,\sigma }$-modules. Parabolic induction defines a natural map $\A _{I,\sigma }\rightarrow\B _{I,\sigma }$, which is defined on each of the summands of $\A _{I,\sigma }$ by parabolic induction.

It follows from \cite[section 5]{Ro} that the functor $M_{I,\sigma }$ commutes by equivalence of category with the one from the category of finitely generated  right $\A _{I,\sigma }$-modules to the category of finitely generated right $\B_{I,\sigma }$-modules, given by $M\mapsto M\otimes _{\sA_{I,d} }  \B_{I,d}$.

\smallskip

Recall that two functors $F, G:\CC\rightarrow\D$ are equivalent, if there exists equivalences of categories $\Phi:\CC\rightarrow \CC$ and $\Psi:\D\rightarrow\D $, so that $G=\Psi\circ F\circ\Phi $.

\begin{theorem}\label{t:2.10} Let $I$ and $J$ be disjoint sets of self-dual length $1$ inertial orbits. Identify $(\CC^{GL}_I\otimes \CC^{GL}_J)\otimes (\CC_{I;\sigma }\otimes \CC_{J;\sigma })$ with $(\CC^{GL}_I\otimes \CC_{I;\sigma })\otimes (\CC^{GL}_J\otimes \CC_{J;\sigma })$.

Then, the functors  $M_{I\cup J,\sigma }\circ (\Phi_{I,J}^{GL}\otimes \Phi_{I,\sigma ;J,\sigma})$ and $\Phi _{I,\sigma ;J,\sigma }\circ (M_{I,\sigma}\otimes M_{J,\sigma })$ are equivalent functors from $(\CC^{GL}_I\otimes \CC^{GL}_J)\otimes (\CC_{I;\sigma }\otimes \CC_{J;\sigma })=(\CC^{GL}_I\otimes \CC_{I;\sigma })\otimes (\CC^{GL}_J\otimes \CC_{J;\sigma })$ into $\CC_{I\cup J;\sigma }$.
\end{theorem}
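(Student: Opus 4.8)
The plan is to reduce everything to the algebra isomorphisms established in the previous subsections and then check the two functors agree after passing through these equivalences. Concretely, by Corollary~\ref{c:2.6} and the discussion above, each category in sight is equivalent to a category of finitely generated right modules: $\CC^{GL}_I\otimes\CC_{I;\sigma}$ to modules over $\A_{I,\sigma}=\B^{GL}_I\otimes\B_{I,\sigma}$, similarly for $J$, and $\CC^{GL}_{I\cup J}\otimes\CC_{I\cup J;\sigma}$ to modules over $\A_{I\cup J,\sigma}=\B^{GL}_{I\cup J}\otimes\B_{I\cup J,\sigma}$. The isomorphism $\B_{I\cup J,\sigma}\simeq\B_{I,\sigma}\otimes\B_{J,\sigma}$ (and its $GL$-analogue $\B^{GL}_{I\cup J}\simeq\B^{GL}_I\otimes\B^{GL}_J$) from the remark after Corollary~\ref{c:2.7} underlies $\Phi_{I,\sigma;J,\sigma}$ and $\Phi^{GL}_{I,J}$; under the identification of the two orderings of the tensor factors this gives an isomorphism $\A_{I\cup J,\sigma}\simeq\A_{I,\sigma}\otimes\A_{J,\sigma}$, which underlies $\Phi^{GL}_{I,J}\otimes\Phi_{I,\sigma;J,\sigma}$. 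The functor $M_{I\cup J,\sigma}$ corresponds, as recalled just above, to the base-change functor $M\mapsto M\otimes_{\A_{I\cup J,\sigma}}\B_{I\cup J,\sigma}$ along the parabolic-induction map $\A_{I\cup J,\sigma}\to\B_{I\cup J,\sigma}$, and similarly $M_{I,\sigma}$, $M_{J,\sigma}$ correspond to base change along $\A_{I,\sigma}\to\B_{I,\sigma}$, $\A_{J,\sigma}\to\B_{J,\sigma}$.

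With this dictionary the theorem becomes a purely algebraic commutativity statement: I would show that the square
\begin{equation}\label{e:square}
\begin{CD}
\A_{I,\sigma}\otimes\A_{J,\sigma} @>{\mathrm{par.ind.}\otimes\mathrm{par.ind.}}>> \B_{I,\sigma}\otimes\B_{J,\sigma}\\
@V{\wr}VV @VV{\wr}V\\
\A_{I\cup J,\sigma} @>{\mathrm{par.ind.}}>> \B_{I\cup J,\sigma}
\end{CD}
\end{equation}
commutes, where the vertical arrows are the isomorphisms above. Granting \eqref{e:square}, one gets a canonical isomorphism of the two base-change functors: for a right $\A_{I,\sigma}\otimes\A_{J,\sigma}$-module $M\otimes N$ (and then in general by the usual argument, since such modules generate),
\[
(M\otimes N)\otimes_{\A_{I,\sigma}\otimes\A_{J,\sigma}}(\B_{I,\sigma}\otimes\B_{J,\sigma})\;\simeq\;(M\otimes_{\A_{I,\sigma}}\B_{I,\sigma})\otimes(N\otimes_{\A_{J,\sigma}}\B_{J,\sigma}),
\]
and transporting through \eqref{e:square} identifies this with $(M\otimes N)\otimes_{\A_{I\cup J,\sigma}}\B_{I\cup J,\sigma}$. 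Unwinding the equivalences of categories (Corollary~\ref{c:2.6} for each of $I$, $J$, $I\cup J$, and the tensor-product-of-categories construction of Corollary~\ref{c:2.7} on both sides) then yields precisely that $M_{I\cup J,\sigma}\circ(\Phi^{GL}_{I,J}\otimes\Phi_{I,\sigma;J,\sigma})$ and $\Phi_{I,\sigma;J,\sigma}\circ(M_{I,\sigma}\otimes M_{J,\sigma})$ are equivalent in the stated sense, the equivalences $\Phi$, $\Psi$ being built from the category equivalences and the reorderings of tensor factors.

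The main obstacle is verifying the commutativity \eqref{e:square} at the level of the explicit generators. Both the parabolic-induction maps $\A_{\bullet,\sigma}\to\B_{\bullet,\sigma}$ and the tensor-decomposition isomorphisms are described in \cite{Ro} and \cite[section~5]{H1}, \cite[section~3.1]{H1}, \cite{H3} via the operators $T_w$, $J_r$ and the scalar algebra $B_{\so}$, and in the decomposition of Proposition~\ref{p:2.3} the subalgebras attached to orbits with support in $I$ commute with those attached to orbits with support in $J$ because the corresponding root subsystems $\Sigma_{\so_i}$, $\Sigma_{\so_j'}$ are orthogonal direct summands. So the point is that parabolic induction, being defined block-by-block on the summands indexed by sets of homogeneous orbits, respects the splitting of each such set into its $I$-part and its $J$-part; this is exactly the transitivity/functoriality of parabolic induction used already in the construction of the embeddings $\End(\CP_{\so_i,\sigma})\hookrightarrow\End(\CP_{\so,\sigma})$ in the subsection preceding Proposition~\ref{p:2.3}, together with the compatibility of the chosen intertwining isomorphisms with the reordering that separates $I$- and $J$-indexed factors. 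I would handle this by reducing to the one-orbit embeddings, checking the square on each $\End(\CP_{\so_i,\sigma})$ (where it holds by functoriality of induction and the fixed choices of Remark~\ref{r:2.6}(ii) read with $\CO'=\CO^{(i)}$), and then invoking that these generate, together with $\B_I$-$\B_J$ commutativity, to conclude \eqref{e:square}. A secondary point to keep track of is the bookkeeping between the two bracketings $(\CC^{GL}_I\otimes\CC^{GL}_J)\otimes(\CC_{I;\sigma}\otimes\CC_{J;\sigma})$ and $(\CC^{GL}_I\otimes\CC_{I;\sigma})\otimes(\CC^{GL}_J\otimes\CC_{J;\sigma})$; this is just the associativity/commutativity constraints of the tensor product of categories from \cite{D}, and can be absorbed into the equivalences $\Phi$, $\Psi$ in the definition of ``equivalent functors''.
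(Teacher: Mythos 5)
Your proposal follows essentially the same route as the paper's proof: pass via Corollary~\ref{c:2.6} to module categories over $\A_{\bullet,\sigma}$ and $\B_{\bullet,\sigma}$, identify $M_{I,\sigma}$, $M_{J,\sigma}$, $M_{I\cup J,\sigma}$ with base change along the parabolic-induction maps, identify the $\Phi$'s with the tensor-decompositions $\B_{I\cup J,\sigma}\simeq\B_{I,\sigma}\otimes\B_{J,\sigma}$, and reduce to the (immediate) tensor-product identity $(M'\otimes M''\otimes N'\otimes N'')\otimes_{\A_{I,\sigma}\otimes\A_{J,\sigma}}(\B_{I,\sigma}\otimes\B_{J,\sigma})\simeq((M'\otimes M'')\otimes_{\A_{I,\sigma}}\B_{I,\sigma})\otimes((N'\otimes N'')\otimes_{\A_{J,\sigma}}\B_{J,\sigma})$. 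You spend more effort than the paper does on explicitly verifying that the algebra square relating parabolic induction to the decomposition isomorphisms commutes; the paper subsumes this into ``the compatibility remarked above'' (i.e.\ the facts recalled just before the theorem about $M_{I,\sigma}$ corresponding to $M\mapsto M\otimes_{\A_{I,\sigma}}\B_{I,\sigma}$) and, more importantly, into the slack built into the definition of ``equivalent functors'', which lets one absorb the reordering/associativity discrepancies into the equivalences $\Phi$, $\Psi$ rather than proving strict commutativity. Your plan is correct; it just does some extra work that the weaker notion of equivalence-of-functors renders optional.
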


\begin{proof} Remark first that the identification in the second sentence of the theorem is justified by \cite[5.16]{D}.

As we are only considering equivalences of functors, we can go over - by equivalence of categories -  to the category of finitely generated right modules over the corresponding endomorphism algebras. Using the compatibility remarked above of the functors  $M_{I,\sigma }$, $M_{J,\sigma }$ and $M_{I\cup J,\sigma }$ with this equivalence of categories, this breaks down to the following tensor product identity for finitely generated $\A _{I,\sigma }$-modules $M'\otimes M''$ and finitely generated $\A _{J,\sigma }$-modules $N'\otimes N''$:

\begin{align*}
&(M'\otimes M''\otimes N'\otimes N'')\otimes_{(\sA _{I,\sigma }\otimes\sA _{J,\sigma })}  (\B _{I,\sigma }\otimes\B _{J,\sigma })\\
&\simeq ((M'\otimes M'')\otimes _{\sA _{I,\sigma }}\B _{I,\sigma })\otimes ((N'\otimes N'')\otimes _{\sA _{J,\sigma }}\B _{J,\sigma }),
\end{align*}
which is immediate.
\end{proof}

\section{Reducibility of induced modules for classical affine Hecke algebras}\label{s:red}

\subsection{The Bernstein presentation} Let $(X,X^\vee,R,R^\vee,\Delta)$ be a based root datum with finite Weyl group $W$. Let $\ell:W\to \bN$ be the length function. For every $\alpha\in R$, denote by $s_\alpha\in W$ the corresponding reflection. For each root $\alpha\in R$, let $\alpha^\vee\in R^\vee$ denote the corresponding coroot. We recall the Bernstein presentation of the affine Hecke algebra following Lusztig \cite{Lu}. Let $\lambda:\Delta\to \bZ$ and $\lambda^*:\{\alpha\in \Delta\mid \alpha^\vee\in 2X^\vee\}\to\bZ$ be $W$-invariant  functions. Let $\bbq$ be an indeterminate.

Let $\CH(W,\bbq)$ denote the finite Hecke algebra, i.e., the associative, unital $\bC[\bbq]$-algebra generated by $T_w$, $w\in W$ subject to the relations:
\begin{enumerate}
\item $T_wT_{w'}=T_{ww'}$, if $\ell(ww')=\ell(w)+\ell(w')$;
\item $(T_{s_\alpha}-\bbq^{\lambda(\alpha)})(T_{s_\alpha}+1)=0$, for all $\alpha\in\Delta$.
\end{enumerate}

\begin{definition}\label{d:bernstein}
The affine Hecke algebra $\CH(\bbq)=\CH(\bbq,\lambda,\lambda^*)$  is the unique associative unital $\bC[\bbq]$-algebra generated by $T_w$, $w\in W$, and $\theta_x$, $x\in X$ subject to finite Hecke algebra relations and the relations:
\begin{enumerate}
\item $\theta_x\theta_{x'}=\theta_{x+x'}$, $x,x'\in X$;
\item $\theta_xT_{s_\alpha}-T_{s_\alpha}\theta_{s_\alpha(x)}=(\theta_x-\theta_{s_\alpha(x)})(\C G(\alpha)-1)$, where
\begin{equation}
\C G(\alpha)=\begin{cases} \frac{\theta_\alpha \bbq^{\lambda(\alpha)}-1}{\theta_\alpha-1},&\text{if }\alpha^\vee\notin 2 X^\vee,\\
 \frac{(\theta_\alpha \bbq^{(\lambda(\alpha)+\lambda^*(\alpha))/2}-1)(\theta_\alpha \bbq^{(\lambda(\alpha)-\lambda^*(\alpha))/2}+1)}{\theta_{2\alpha}-1},&\text{if }\alpha^\vee\in 2 X^\vee.
\end{cases}
\end{equation}
\end{enumerate}
\end{definition}
Denote $\CA=\bC[\bbq]\langle \theta_x:x\in X\rangle$; this is an abelian subalgebra of $\CH(\bbq)$. The center of $\CH(\bbq)$ is $\C Z=\CA^W$ \cite[Proposition 3.11]{Lu}. Let $\widehat \CA$ denote the field of fractions of the integral domain $\CA.$ Set $\widehat \CH(q)=\CH(q)\otimes_{\CA}\widehat\CA$. The algebra $\widehat\CH(q)$ contains $\CH(q)$ as an $\CA$-subalgebra. Following \cite[section 5.1]{Lu}, define the elements $\tau_\alpha\in \widehat\CH(\bbq)$:
\begin{equation}
\tau_\alpha=(T_{s_\alpha}+1)\C G(\alpha)^{-1}-1.
\end{equation}
Then $\tau_\alpha$ satisfy the braid relations and $\tau_\alpha^2=1$ \cite[Proposition 5.2]{Lu}. This means that the assignment $\tau_\alpha\to s_\alpha$ extend to a group isomorphism between $\<\tau_\alpha:\alpha\in\Delta\>$ and  $W$. Let $\tau_w$ denote the element corresponding to $w\in W$. Then
\begin{equation}
f\tau_w=\tau_w w^{-1}(f), \text{ for all }f\in\widehat\CA.
\end{equation}
We define elements $R_\alpha\in \CH(q)$ by clearing the denominator of $\tau_\alpha$:
\begin{equation}
R_\alpha=\begin{cases} T_{s_\alpha}(\theta_\alpha-1)-(\bbq^{\lambda(\alpha)}-1)\theta_\alpha,&\text{if }\alpha^\vee\notin 2X^\vee,\\
(T_{s_\alpha}+1)(\theta_{2\alpha}-1)-(\theta_\alpha \bbq^{(\lambda(\alpha)+\lambda^*(\alpha))/2}-1)(\theta_\alpha \bbq^{(\lambda(\alpha)-\lambda^*(\alpha))/2}+1),&\text{if }\alpha^\vee\in 2X^\vee.
\end{cases}
\end{equation}
The elements $R_\alpha$ satisfy the following similar properties to $\tau_\alpha$, as can be seen from the commutativity of $\CA$ (and the properties of $\tau_\alpha$).

\begin{lemma}\label{l:R-alpha}
\begin{enumerate}
\item $R_\alpha^2= (\bbq^{\lambda(\alpha)}-\theta_{-\alpha})(\bbq^{\lambda(\alpha)}-\theta_\alpha)$, if $\alpha^\vee\notin 2X^\vee,$ and
$R_\alpha^2=(\bbq^{(\lambda(\alpha)+\lambda^*(\alpha))/2}-\theta_\alpha)(\bbq^{(\lambda(\alpha)-\lambda^*(\alpha))/2}+\theta_\alpha)(\bbq^{(\lambda(\alpha)+\lambda^*(\alpha))/2}-\theta_{-\alpha})(\bbq^{(\lambda(\alpha)-\lambda^*(\alpha))/2}+\theta_{-\alpha})$ if $\alpha^\vee\in 2X^\vee$.
\item $f R_\alpha=R_\alpha s_{\alpha}(f)$, for all $f\in \CA$.
\item $\{R_\alpha\}$ satisfy the braid relations, and hence we may define $R_w$, $w\in W$.
\end{enumerate}
\end{lemma}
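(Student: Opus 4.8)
The plan is to deduce the three properties of $R_\alpha$ directly from the corresponding known properties of $\tau_\alpha$ by "clearing denominators", keeping careful track of how the scalar factor $\C G(\alpha)$ interacts with the commutation relations in $\widehat\CH(\bbq)$.

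\textbf{Setup.} First I would record the exact relation between $R_\alpha$ and $\tau_\alpha$. By definition $\tau_\alpha=(T_{s_\alpha}+1)\C G(\alpha)^{-1}-1$, so $(\tau_\alpha+1)\C G(\alpha)=T_{s_\alpha}+1$, i.e. $\tau_\alpha\C G(\alpha)=T_{s_\alpha}+1-\C G(\alpha)$. In the case $\alpha^\vee\notin 2X^\vee$ one has $\C G(\alpha)=\frac{\theta_\alpha\bbq^{\lambda(\alpha)}-1}{\theta_\alpha-1}$, whose denominator is $\theta_\alpha-1$; multiplying $\tau_\alpha\C G(\alpha)$ through by $\theta_\alpha-1$ gives precisely $R_\alpha=T_{s_\alpha}(\theta_\alpha-1)-(\bbq^{\lambda(\alpha)}-1)\theta_\alpha$ after simplifying $(T_{s_\alpha}+1)(\theta_\alpha-1)-(\theta_\alpha\bbq^{\lambda(\alpha)}-1)=T_{s_\alpha}(\theta_\alpha-1)-(\bbq^{\lambda(\alpha)}-1)\theta_\alpha$. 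Thus $R_\alpha=D_\alpha\,\tau_\alpha\,?$ — more precisely I would establish the clean identity $R_\alpha = \tau_\alpha\cdot s_\alpha(D_\alpha)$ where $D_\alpha\in\CA$ is the denominator of $\C G(\alpha)$ (namely $\theta_\alpha-1$, resp. $\theta_{2\alpha}-1$), using $\C G(\alpha)=N_\alpha/D_\alpha$ with $N_\alpha\in\CA$ the numerator, so that $R_\alpha=(T_{s_\alpha}+1)N_\alpha\cdot? - D_\alpha$. The correct bookkeeping is that $R_\alpha=\tau_\alpha D_\alpha$ when one is careful about whether $D_\alpha$ sits on the left or the right; since $\tau_\alpha f=s_\alpha(f)\tau_\alpha$ for $f\in\widehat\CA$ this only changes $D_\alpha$ into $s_\alpha(D_\alpha)$, which lies in $\CA$ as well (note $s_\alpha(\theta_\alpha-1)=\theta_{-\alpha}-1$, etc.). I would fix the convention $R_\alpha=D_\alpha\tau_\alpha$ once and for all and verify it matches the displayed formula.

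\textbf{Proof of the three items.} Given $R_\alpha=D_\alpha\tau_\alpha$:
\emph{(2)} For $f\in\CA$, $fR_\alpha = fD_\alpha\tau_\alpha = D_\alpha f\tau_\alpha = D_\alpha\tau_\alpha s_\alpha(f) = R_\alpha s_\alpha(f)$, using commutativity of $\CA$ and the relation $f\tau_w=\tau_w w^{-1}(f)$ from the excerpt with $w=s_\alpha$.
\emph{(1)} Compute $R_\alpha^2 = D_\alpha\tau_\alpha D_\alpha\tau_\alpha = D_\alpha\, s_\alpha(D_\alpha)\,\tau_\alpha^2 = D_\alpha s_\alpha(D_\alpha)$ since $\tau_\alpha^2=1$. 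In the non-doubled case $D_\alpha s_\alpha(D_\alpha)=(\theta_\alpha-1)(\theta_{-\alpha}-1)$ — but wait, the claimed value is $(\bbq^{\lambda(\alpha)}-\theta_{-\alpha})(\bbq^{\lambda(\alpha)}-\theta_\alpha)$, so the denominator of $\tau_\alpha^2$ must be accounted for differently: actually $R_\alpha^2$ equals the \emph{numerator} of $\tau_\alpha^2\C G(\alpha)s_\alpha(\C G(\alpha))$-type expression. I would instead argue: $R_\alpha^2 = D_\alpha\tau_\alpha D_\alpha\tau_\alpha$, and since $\tau_\alpha D_\alpha=s_\alpha(D_\alpha)\tau_\alpha$, this is $D_\alpha s_\alpha(D_\alpha)\tau_\alpha^2=D_\alpha s_\alpha(D_\alpha)$. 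Hence the content of (1) is the purely commutative identity in $\CA$ that $D_\alpha s_\alpha(D_\alpha)$ equals the stated product — which it does not as written, so the correct normalization must be $R_\alpha = $ (numerator after clearing) and one gets $R_\alpha^2 = $ [that numerator] $\cdot s_\alpha$[that numerator] divided by nothing; concretely, from $\tau_\alpha=(T_{s_\alpha}+1)\C G(\alpha)^{-1}-1$ and $\tau_\alpha^2=1$ one derives directly that $(R_\alpha)^2$, being $R_\alpha\cdot R_\alpha$ with the explicit formula, expands using $T_{s_\alpha}^2=(\bbq^{\lambda(\alpha)}-1)T_{s_\alpha}+\bbq^{\lambda(\alpha)}$ and the commutation relation (2) just proved; I would carry out this short expansion to land on the stated factorization, checking the doubled case similarly with $T_{s_\alpha}^2=(\bbq^{\lambda(\alpha)}-1)T_{s_\alpha}+\bbq^{\lambda(\alpha)}$ again and the corresponding $R_\alpha$ formula.
\emph{(3)} The braid relations for $\{R_\alpha\}$ follow from those for $\{\tau_\alpha\}$ (Proposition 5.2 of \cite{Lu}) together with (2): writing $R_\alpha=D_\alpha\tau_\alpha$ and pushing all the $D$'s to one side via $\tau_w f=\dots$ shows that a braid word in the $R$'s equals (a product of $\CA$-elements obtained by applying Weyl group elements to the $D_\beta$'s) times the same braid word in the $\tau$'s; since the two sides of a braid relation involve the same set of reflections with the same multiplicities, the accumulated $\CA$-scalars agree on both sides, and the $\tau$-braid relation finishes it. This also legitimizes defining $R_w=D_w\tau_w$ for a reduced word, independent of the reduced expression.

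\textbf{Main obstacle.} The only real subtlety — and where I expect to spend the most care — is the precise normalization: whether $R_\alpha$ equals $D_\alpha\tau_\alpha$, $\tau_\alpha D_\alpha$, or $s_\alpha(D_\alpha)\tau_\alpha$, and consequently getting the scalar $R_\alpha^2$ exactly right (the asymmetry between $\theta_\alpha$ and $\theta_{-\alpha}=s_\alpha(\theta_\alpha)$ in the stated formula is exactly the fingerprint of this choice). Once that identity $R_\alpha=D_\alpha\tau_\alpha$ (with $D_\alpha$ the correct one-sided denominator) is pinned down and checked against the displayed formula for $R_\alpha$, items (1)–(3) are formal consequences of $\tau_\alpha^2=1$, the braid relations for $\tau_\alpha$, and the twisted-commutation $f\tau_w=\tau_w w^{-1}(f)$, exactly as the lemma's preamble ("as can be seen from the commutativity of $\CA$ and the properties of $\tau_\alpha$") suggests. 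The doubled-root case requires only rerunning the same computation with $D_\alpha=\theta_{2\alpha}-1$ and the longer numerator, which is routine.
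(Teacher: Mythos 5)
Your overall approach is correct and is exactly what the paper's preamble ("as can be seen from the commutativity of $\CA$ and the properties of $\tau_\alpha$") intends: write $R_\alpha$ as an $\CA$-factor times $\tau_\alpha$, then read off all three items from $\tau_\alpha^2=1$, the twisted commutation $f\tau_w=\tau_w\,w^{-1}(f)$, and the braid relations for $\tau_\alpha$. The paper gives no further proof, so your plan matches.

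The slip is in identifying the $\CA$-factor. You take $D_\alpha$ to be the \emph{denominator} of $\C G(\alpha)$, i.e.\ $\theta_\alpha-1$ (resp.\ $\theta_{2\alpha}-1$). But $\tau_\alpha=(T_{s_\alpha}+1)\C G(\alpha)^{-1}-1$, and it is $\C G(\alpha)^{-1}$ that gets cleared, whose denominator is the \emph{numerator} $N_\alpha$ of $\C G(\alpha)$: $N_\alpha=\theta_\alpha\bbq^{\lambda(\alpha)}-1$ in the non-doubled case and $N_\alpha=(\theta_\alpha\bbq^{(\lambda+\lambda^*)/2}-1)(\theta_\alpha\bbq^{(\lambda-\lambda^*)/2}+1)$ in the doubled case. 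A direct computation shows $\tau_\alpha N_\alpha=(T_{s_\alpha}+1)\,D_\alpha-N_\alpha=R_\alpha$, so the clean identity is
\[
R_\alpha \;=\; \tau_\alpha\,N_\alpha \;=\; s_\alpha(N_\alpha)\,\tau_\alpha .
\]
You actually noticed that $D_\alpha s_\alpha(D_\alpha)=(\theta_\alpha-1)(\theta_{-\alpha}-1)$ fails to match the stated $R_\alpha^2$, and you correctly guessed that the factor should be "the numerator after clearing", but you never committed to it and retreated to a brute-force expansion for (1). With $R_\alpha=s_\alpha(N_\alpha)\tau_\alpha$ there is nothing to patch: $R_\alpha^2=s_\alpha(N_\alpha)N_\alpha$, which expands to exactly the displayed products (check $(\theta_{-\alpha}\bbq^{\lambda}-1)(\theta_\alpha\bbq^{\lambda}-1)=(\bbq^{\lambda}-\theta_\alpha)(\bbq^\lambda-\theta_{-\alpha})$, using $\theta_\alpha\theta_{-\alpha}=1$, and similarly in the doubled case). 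Item (2) is immediate since $N_\alpha\in\CA$ and $\CA$ is commutative. Item (3) goes through exactly as you outline, once you run it with $N_\alpha$ in place of $D_\alpha$: both sides of a braid word accumulate $\CA$-factors $\prod w_i(N_{\alpha_{j_i}})$, and since $N_\beta$ depends on $\beta$ only through $\theta_\beta$ and the $W$-invariant $\lambda(\beta),\lambda^*(\beta)$, the multiset of roots $\{w_i(\alpha_{j_i})\}$ produced on either side of a braid relation is the same (the familiar "set of negative roots made positive" description), so the accumulated $\CA$-factors agree by commutativity, and the $\tau$-braid relation does the rest. This also gives the well-definedness of $R_w$.

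In short: right method, but pin down $R_\alpha=s_\alpha(N_\alpha)\tau_\alpha$ with $N_\alpha$ the numerator of $\C G(\alpha)$ (not the denominator), and all three items fall out without any ad hoc expansion.
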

Let $V$ be a finite dimensional module of $\CH(\bbq)$ on which $\bbq$ acts by $q\in \bC^\times$. (We will be interested in the case $q>1$.) A weight vector of $V$ with weight $\mu\in X^\vee\otimes_\bZ \bC^\times$ is a vector $0\neq v\in V$ such that
\begin{equation}
\theta_x\cdot v=\<x,\mu\>v,\text{ for all } x\in X.
\end{equation}
Notice that by Lemma \ref{l:R-alpha}, if $v$ is such a weight vector and if $R_w\cdot v\neq 0$ then $R_w\cdot v$ is  also a weight vector with weight $w(\mu)$.

\subsection{Rank one} Consider the $SO(3,\bC)$ root datum with two parameters. More precisely, $X=\bZ\alpha$, $X^\vee=\bZ \alpha^\vee/2$, $R=\{\pm\alpha\}$, $R^\vee=\{\pm\alpha^\vee\}$, $\Delta=\{\alpha\}$, and set $\lambda=\lambda(\alpha)$ and $\lambda^*=\lambda^*(\alpha)$. Specialize $\bbq=q>1$. The affine Hecke algebra $\CH_{A_1}(\lambda,\lambda^*)$ is generated by $T=T_{s_\alpha}$ and $\theta=\theta_{\alpha}$, $\theta^{-1}$ subject to the relations
\begin{equation}
\begin{aligned}
&(T+1)(T-q^\lambda)=0,\\
&\theta T-T\theta^{-1}=(q^\lambda-1)\theta+(q^{(\lambda+\lambda^*)/2}-q^{(\lambda-\lambda^*)/2}).
\end{aligned}
\end{equation}
Let $\CA=\bC[\theta,\theta^{-1}]$. The only proper parabolically induced modules are the two-dimensional modules:
\begin{equation}
I(\nu)=\CH_{A_1}(\lambda,\lambda^*)\otimes_\CA \bC_\nu,\quad \nu\in\bC/(2\pi i\bZ/\log q),
\end{equation}
where $\bC_\nu$ is the one-dimensional module of $\CA$ on which $\theta$ acts by the scalar $q^\nu$. There are four one-dimensional $\CH_{A_1}(\lambda,\lambda^*)$-modules:
\begin{equation}
\begin{aligned}
& [T=-1, \theta=q^{-(\lambda+\lambda^*)/2}],& [T=-1, \theta=-q^{-(\lambda-\lambda^*)/2}],\\
& [T=q^\lambda, \theta=q^{(\lambda+\lambda^*)/2}],& [T=q^\lambda, \theta=-q^{(\lambda-\lambda^*)/2}].
\end{aligned}
\end{equation}
The first of these modules is the analogue of the Steinberg representation and the third is the analogue of the trivial representation.
The following result is well known and easy to prove (for example by using the description of the one-dimensional modules above).

\begin{lemma}\label{l:rank-one1}
The module $I(\nu)$ is reducible if and only if $q^\nu\in\{q^{\pm(\lambda+\lambda^*)/2}, -q^{\pm (\lambda-\lambda^*)/2}\}$.
\end{lemma}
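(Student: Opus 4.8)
The plan is to analyze the $2$-dimensional module $I(\nu)$ directly by exhibiting its natural basis and the action of $T=T_{s_\alpha}$ and $\theta=\theta_\alpha$, then determining exactly when a proper submodule can occur. First I would note that, as an $\CA$-module, $I(\nu)=\CH_{A_1}(\lambda,\lambda^*)\otimes_\CA\bC_\nu$ has basis $\{1\otimes 1,\ T\otimes 1\}$, since $\CH_{A_1}(\lambda,\lambda^*)$ is free of rank $|W|=2$ over $\CA$ with basis $\{1,T\}$. The vector $v_\nu:=1\otimes 1$ is a weight vector of weight $q^\nu$ (in the notation of the excerpt, $\theta_x\cdot v=\langle x,\mu\rangle v$ with $\mu$ corresponding to $q^\nu$), and by Lemma~\ref{l:R-alpha}(2) applied in rank one, $R_\alpha\cdot v_\nu$ is either $0$ or a weight vector of weight $q^{-\nu}$. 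Using the rank-one specialization of $R_\alpha$, namely $R_\alpha=T(\theta-1)-(q^\lambda-1)\theta$ acting on $v_\nu$, one computes $R_\alpha\cdot v_\nu=(q^\nu-1)\,T\cdot v_\nu-(q^\lambda-1)q^\nu\,v_\nu$, and from Lemma~\ref{l:R-alpha}(1) one has $R_\alpha^2=(q^\lambda-\theta_{-\alpha})(q^\lambda-\theta_\alpha)$, which on $I(\nu)$ acts by the scalar $(q^\lambda-q^{-\nu})(q^\lambda-q^\nu)$.

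Next I would run the standard dichotomy. If $q^\nu\neq q^{\pm\nu}$ forces genericity, more precisely if $q^{2\nu}\neq 1$, then $v_\nu$ and $R_\alpha\cdot v_\nu$ are weight vectors for the two distinct characters $q^{\pm\nu}$ of $\CA$, hence linearly independent, so they form a basis diagonalizing $\CA$; a proper submodule would have to be a single $\CA$-weight line, and such a line is $\CH_{A_1}$-stable precisely when $R_\alpha$ kills the appropriate generator, i.e. precisely when $R_\alpha^2$ acts by $0$, which happens exactly when $q^\nu=q^{\pm\lambda}$... but one must be careful: the one-dimensional quotients are governed not by $R_\alpha$ vanishing but by the value of $T$ on the weight line, and the correct reducibility locus is read off from the list of the four one-dimensional modules in the excerpt, whose $\theta$-weights are $q^{\pm(\lambda+\lambda^*)/2}$ and $-q^{\pm(\lambda-\lambda^*)/2}$. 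So the cleaner route, and the one I would actually write, is: $I(\nu)$ is reducible if and only if it has a one-dimensional subquotient, if and only if $q^\nu$ (or $q^{-\nu}$, but the set is visibly stable under $\nu\mapsto-\nu$) equals the $\theta$-weight of one of the four one-dimensional modules, i.e. $q^\nu\in\{q^{\pm(\lambda+\lambda^*)/2},\,-q^{\pm(\lambda-\lambda^*)/2}\}$.

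Concretely the argument has two directions. For the ``if'' direction: suppose $q^\nu$ is one of the four listed scalars; by Frobenius reciprocity (adjunction of $\otimes_\CA\bC_\nu$ with restriction to $\CA$) one has $\Hom_{\CH_{A_1}}(I(\nu),\chi)=\Hom_\CA(\bC_\nu,\chi|_\CA)\neq 0$ for the one-dimensional module $\chi$ with matching $\theta$-weight, so $\chi$ is a quotient of $I(\nu)$ and, $I(\nu)$ being $2$-dimensional, it is reducible. For the ``only if'' direction: if $q^{2\nu}\neq1$ and $q^\nu$ is none of the four scalars, then, as above, $\{v_\nu, R_\alpha v_\nu\}$ is a basis on which $\CA$ acts by the two distinct characters $q^{\pm\nu}$, and $R_\alpha v_\nu\neq0$ since $R_\alpha^2=(q^\lambda-q^{-\nu})(q^\lambda-q^\nu)\neq0$ (as $q^{\pm\nu}\neq q^\lambda$, which follows because $q^\lambda\in\{q^{(\lambda+\lambda^*)/2}\cdot q^{\lambda^*/2},\dots\}$—more simply, if $q^\nu=q^\lambda$ then $q^{2\nu}=q^{2\lambda}$, and one checks $q^{2\lambda}=q^{(\lambda+\lambda^*)/2}\cdot q^{(3\lambda-\lambda^*)/2}$, so it suffices to observe directly that $q^\lambda$ is not in the excluded list unless $\lambda^*\in\{\pm\lambda\}$, which is the degenerate case handled separately); any proper submodule is then a sum of $\CA$-weight lines, but neither $\bC v_\nu$ nor $\bC R_\alpha v_\nu$ is $T$-stable (being $T$-stable would force that line to be one of the four one-dimensional modules, contradicting the assumption on $q^\nu$), so $I(\nu)$ is irreducible. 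The remaining boundary case $q^{2\nu}=1$, i.e. $q^\nu=\pm1$, is checked by hand: $q^\nu=1$ lies in the excluded list iff $\lambda^*=\pm\lambda$, $q^\nu=-1$ iff $\lambda^*=\pm\lambda$ again, and in each sub-case a direct $2\times2$ computation of the $T$-action on the basis $\{1\otimes1,\,T\otimes1\}$ decides reducibility.

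The main obstacle is purely bookkeeping: keeping the two parameters $\lambda,\lambda^*$ straight and verifying that the reducibility locus extracted from the $R_\alpha$-analysis coincides with the explicit four-element set $\{q^{\pm(\lambda+\lambda^*)/2},-q^{\pm(\lambda-\lambda^*)/2}\}$, together with correctly disposing of the finitely many degenerate parameter values where $q^{2\nu}=1$ or where two of the four one-dimensional weights collide. Since the statement is asserted to be ``well known and easy,'' I would keep the write-up short, essentially pointing to the list of one-dimensional modules and the Frobenius-reciprocity argument for the ``if'' direction, with a one-line remark that irreducibility in the complementary range follows from the weight-space decomposition under $\CA$.
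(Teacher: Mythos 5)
Your overall strategy is correct and is exactly what the authors have in mind when they say the result is ``well known and easy to prove (for example by using the description of the one-dimensional modules above)'': the ``if'' direction follows from Frobenius reciprocity and the explicit list of $\theta$-weights of the four one-dimensional modules, and the ``only if'' direction follows by showing that in the generic case both $\CA$-weight lines fail to be $T$-stable.

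However, there is a genuine computational slip in the ``only if'' argument. In this $SO(3,\bC)$ root datum we have $X^\vee=\bZ\alpha^\vee/2$, so $\alpha^\vee=2\cdot(\alpha^\vee/2)\in 2X^\vee$, which means $R_\alpha$ must be taken from the \emph{second} branch of its definition, not the $SL(2)$ branch you used. The correct rank-one expressions are
\[
R_\alpha=(T+1)(\theta^2-1)-(\theta q^{(\lambda+\lambda^*)/2}-1)(\theta q^{(\lambda-\lambda^*)/2}+1),
\]
\[
R_\alpha^2=(q^{(\lambda+\lambda^*)/2}-\theta)(q^{(\lambda-\lambda^*)/2}+\theta)(q^{(\lambda+\lambda^*)/2}-\theta^{-1})(q^{(\lambda-\lambda^*)/2}+\theta^{-1}),
\]
exactly as in the paper's equations (\ref{e:R_n}) and (\ref{e:R_n-squared}) with $m_\pm=(\lambda\pm\lambda^*)/2$. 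Your formula $R_\alpha^2=(q^\lambda-\theta)(q^\lambda-\theta^{-1})$ is the one-parameter $SL(2)$ formula that belongs to Lemma~\ref{l:rank-one2}, not this lemma. With your (incorrect) formula the argument actually breaks at $q^\nu=q^{\pm\lambda}$: your $R_\alpha^2$ then vanishes on $v_\nu$, so $R_\alpha v_\nu$ might be zero, yet $q^\lambda$ need not lie in $\{q^{\pm(\lambda+\lambda^*)/2},-q^{\pm(\lambda-\lambda^*)/2}\}$ (take $\lambda=2$, $\lambda^*=0$). You half-noticed this and inserted the parenthetical digression about ``$q^\lambda$ is not in the excluded list unless $\lambda^*\in\{\pm\lambda\}$'', but that is a non sequitur produced by the wrong formula.

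With the correct $R_\alpha^2$, the argument closes up cleanly and no such digression is needed: $R_\alpha^2$ acts on $v_\nu$ by the scalar $(q^{(\lambda+\lambda^*)/2}-q^\nu)(q^{(\lambda-\lambda^*)/2}+q^\nu)(q^{(\lambda+\lambda^*)/2}-q^{-\nu})(q^{(\lambda-\lambda^*)/2}+q^{-\nu})$, whose vanishing is \emph{literally equivalent} to $q^\nu\in\{q^{\pm(\lambda+\lambda^*)/2},-q^{\pm(\lambda-\lambda^*)/2}\}$. Thus outside that set, $R_\alpha v_\nu\neq 0$ is a weight vector of weight $q^{-\nu}$, and when $q^{2\nu}\neq 1$ the two weight lines give a basis on which $\CA$ acts with distinct characters, so any proper submodule would be a $T$-stable $\theta$-eigenline, hence one of the four one-dimensional modules, contradiction. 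I would also tighten your boundary case: when $q^{2\nu}=1$ you should compute the $\theta$-action on $\{1\otimes1,\,T\otimes1\}$ (not the $T$-action) and observe that the upper-triangular matrix of $\theta$ has off-diagonal entry $(q^{(\lambda+\lambda^*)/2}q^\nu-1)(q^{(\lambda-\lambda^*)/2}q^\nu+1)$; this vanishes exactly when $q^\nu$ is in the four-element set, and reducibility follows in that case and only then. Your characterization of when $\pm1$ lies in the excluded set is also a little off: since $q>1$ and $\lambda,\lambda^*\in\bZ$, $q^\nu=1$ lies in the set iff $\lambda^*=-\lambda$, and $q^\nu=-1$ iff $\lambda^*=\lambda$, not ``$\lambda^*=\pm\lambda$'' in both cases.
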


Now consider the $SL(2,\bC)$ root datum with one parameter. In this case, $X=\bZ\alpha/2$, $X^\vee=\bZ \alpha^\vee$, $R=\{\pm\alpha\}$, $R^\vee=\{\pm\alpha^\vee\}$, $\Delta=\{\alpha\}$, and set $\lambda=\lambda(\alpha)$. Specialize $\bbq=q>1$. The affine Hecke algebra $\CH_{A_1}(\lambda)$ is generated by $T=T_{s_\alpha}$ and $\theta=\theta_{\alpha/2}$, $\theta^{-1}$ subject to the relations
\begin{equation}
\begin{aligned}
&(T+1)(T-q^\lambda)=0,\\
&\theta T-T\theta^{-1}=(q^\lambda-1)\theta.
\end{aligned}
\end{equation}
Let $\CA=\bC[\theta,\theta^{-1}]$. As before, the only proper parabolically induced modules are the two-dimensional modules:
\begin{equation}
I(\nu)=\CH_{A_1}(\lambda)\otimes_\CA \bC_\nu,\quad \nu\in\bC/(2\pi i\bZ/\log q).
\end{equation}
There are four one-dimensional $\CH_{A_1}(\lambda)$-modules:
\begin{equation}
[T=-1, \theta=q^{-\lambda/2}],\  [T=-1, \theta=-q^{-\lambda/2}],\  [T=q^\lambda, \theta=q^{\lambda/2}],\ [T=q^\lambda, \theta=-q^{\lambda/2}].
\end{equation}
Notice that in the first two modules, $\theta_\alpha=\theta^2=q^{-\lambda}$ and in the last two $\theta_\alpha=q^{\lambda}$.
Again the following result is well known.

\begin{lemma}\label{l:rank-one2}
The module $I(\nu)$ is reducible if and only if $q^\nu\in\{\pm q^{\pm \lambda/2}\}$.
\end{lemma}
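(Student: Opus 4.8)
The plan is to carry out an elementary rank-one computation inside the two-dimensional module $I(\nu)=\CH_{A_1}(\lambda)\otimes_\CA\bC_\nu$ and to detect reducibility through the eigenvectors of $T$.

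First I would write down the action of the generators in the basis $v_0=1\otimes 1_\nu$, $v_1=T\otimes 1_\nu$ of $I(\nu)$, where $1_\nu$ spans $\bC_\nu$. Since $\theta\in\CA$, we have $\theta v_0=q^\nu v_0$; the relation $\theta T=T\theta^{-1}+(q^\lambda-1)\theta$ gives $\theta v_1=(q^\lambda-1)q^\nu\,v_0+q^{-\nu}v_1$; and $Tv_0=v_1$, while $(T+1)(T-q^\lambda)=0$ gives $Tv_1=q^\lambda v_0+(q^\lambda-1)v_1$. Thus, in the basis $(v_0,v_1)$,
\[
\theta\longmapsto\begin{pmatrix} q^\nu & (q^\lambda-1)q^\nu\\[2pt] 0 & q^{-\nu}\end{pmatrix},\qquad
T\longmapsto\begin{pmatrix} 0 & q^\lambda\\[2pt] 1 & q^\lambda-1\end{pmatrix}.
\]

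Next I would observe that, since $\dim I(\nu)=2$, the module is reducible if and only if it contains a one-dimensional submodule, i.e.\ a line stable under both generators, i.e.\ a common eigenvector of the two matrices above. The operator $T$ always has the two distinct eigenvalues $q^\lambda$ and $-1$ (using $q>1$), with respective eigenvectors $v_0+v_1$ and $q^\lambda v_0-v_1$, both independent of $\nu$. It then suffices to test when one of these two fixed vectors is also a $\theta$-eigenvector. A direct computation gives $\theta(v_0+v_1)=q^{\lambda+\nu}v_0+q^{-\nu}v_1$ and $\theta(q^\lambda v_0-v_1)=q^{\nu}v_0-q^{-\nu}v_1$. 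Hence $v_0+v_1$ is a $\theta$-eigenvector exactly when $q^{\lambda+\nu}=q^{-\nu}$, i.e.\ $q^\nu=\pm q^{-\lambda/2}$, and $q^\lambda v_0-v_1$ is a $\theta$-eigenvector exactly when $q^{2\nu}=q^{\lambda}$, i.e.\ $q^\nu=\pm q^{\lambda/2}$. Combining, $I(\nu)$ is reducible precisely when $q^\nu\in\{\pm q^{\pm\lambda/2}\}$, and in each of these four cases the common eigenvector realizes one of the four one-dimensional modules listed above, so the sub/quotient is explicit.

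I expect no real obstacle here; the computation is routine. The only point worth a remark is that ``reducible'' also covers $I(\nu)$ splitting as a direct sum of two one-dimensional modules, which can happen only if the two reducibility conditions hold simultaneously, forcing $q^\lambda=q^{-\lambda}$; this case is automatically included, being still detected by a common eigenvector. Alternatively, one may avoid matrices: by the adjunction $\Hom_{\CH_{A_1}(\lambda)}(I(\nu),\xi)\cong\Hom_\CA(\bC_\nu,\xi|_\CA)$, a one-dimensional module $\xi$ is a quotient of $I(\nu)$ iff $\theta$ acts on $\xi$ by $q^\nu$, which yields the ``if'' direction; and any one-dimensional submodule of $I(\nu)$ must share its central character $q^\nu+q^{-\nu}$, whereas the four one-dimensional modules have central character $\pm(q^{\lambda/2}+q^{-\lambda/2})$ only, so $q^\nu+q^{-\nu}=\pm(q^{\lambda/2}+q^{-\lambda/2})$, i.e.\ $q^\nu\in\{\pm q^{\pm\lambda/2}\}$, for the ``only if'' direction.
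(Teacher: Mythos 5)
Your proof is correct and takes essentially the approach the paper intimates: the authors state that the lemma ``is well known and easy to prove (for example by using the description of the one-dimensional modules above),'' and your Frobenius-reciprocity/central-character argument is precisely that route, while your explicit $2\times 2$ matrix computation (finding the $T$-eigenvectors $v_0+v_1$ and $q^\lambda v_0 - v_1$, which works since $T$ always has distinct eigenvalues $q^\lambda\neq -1$ for $q>1$, and testing when they are $\theta$-eigenvectors) is a direct and correct elaboration of the same idea.
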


\subsection{Type $B_n$} (Hecke algebra with root datum of type $SO(2n+1)$ and unequal parameters) Let us specialize now to the case when $X=\bZ^n$ and $R$ is the root system of type $B_n$. Let $\Delta=\{\epsilon_1-\epsilon_2,\epsilon_2-\epsilon_3,\dots,\epsilon_{n-1}-\epsilon_n,\epsilon_n\}$ in the usual coordinates. Let $m_+,m_-$ be real parameters and set $\bbq=q>1$,
\begin{equation}
\lambda(\epsilon_i-\epsilon_{i+1})=q,\ \lambda(\epsilon_n)=q^{m_++m_-},\ \lambda^*(\epsilon_n)=q^{m_+-m_-}.
\end{equation}
Denote by $\CH_B=\CH_B(m_+,m_-)$ this specialization of the affine Hecke algebra. Then $\CH_B$ is generated by $T_i=T_{\alpha_i}$ and $\theta_i=\theta_{\epsilon_i}$ subject to the (finite) Hecke relations and the cross-relations:
\begin{equation}
\begin{aligned}
&\theta_i T_i-T_i\theta_{i+1}=(q-1)\theta_i, &1\le i\le n-1,\\
&\theta_i T_j=T_j \theta_i, &|i-j|\ge 2,\\
&\theta_{n-1} T_n=T_n \theta_{n-1},\\
&\theta_n T_n-T_n\theta_n^{-1}=(q^{m_++m_-}-1)\theta_n+(q^{m_+}-q^{m_-}).
\end{aligned}
\end{equation}
Then $\CA=\bC[\theta_i^{\pm 1}, 1\le i\le n]$. The elements $R_\alpha$ become
\begin{equation}\label{e:R_n}
\begin{aligned}
&R_{i}=T_i(\theta_{\alpha_i}-1)-(q-1)\theta_{\alpha_i},\text{ where }\theta_{\alpha_i}=\theta_i \theta_{i+1}^{-1}, \ 1\le i\le n-1,\\
&R_n=(T_n+1)(\theta_n^2-1)-(\theta_n q^{m_+}-1)(\theta_n q^{m_-}+1).
\end{aligned}
\end{equation}
We remark that with our notation
\begin{equation}\label{e:R_n-squared}
R_n^2=(q^{m_+}-\theta_n)(q^{m_+}-\theta_n^{-1})(q^{m_-}+\theta_n)(q^{m_-}+\theta_n^{-1}).
\end{equation}

\subsection{Induced modules: overlapping supports}
Let $\CH_A$ denote the subalgebra of $\CH_B$ generated by $T_i$, $1\le i\le n-1$ and by $\theta_j$, $1\le j\le n.$ This is isomorphic to the affine Hecke algebra of $\mathfrak{gl}(n)$. We will think of the weights of $\CH_A$-modules and of $\CH_B$-modules as $n$-tuples
\[\nu=(q^{\lambda_1},q^{\lambda_2},\dots,q^{\lambda_n}),
\]
where $\lambda_i\in \bC/{2\pi i\log(q)}$, $1\le i\le n$. The Weyl group $W_n$ of type $B_n$ acts by permutations and inversions on such a $\nu$.

Let $W_n^{S_n}$ denote the set of (unique) minimal length representatives of the cosets $W_n/S_n$. Let $\pi$ be a finite dimensional $\CH_A$-module and form the parabolically induced $\CH_B$ module
\[I(\pi)=\CH_B\otimes_{\CH_A}\pi.
\]
The following lemma is a particular case of a well-known result, see for example \cite{BM}.

\begin{lemma}\label{l:BM}
The weights of $I(\pi)$ are $w(\nu)$, where $\nu$ ranges over the weights on $\pi$ and $w\in W_n^{S_n}$.
\end{lemma}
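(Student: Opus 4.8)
The plan is to identify the weights of the induced module $I(\pi) = \CH_B \otimes_{\CH_A} \pi$ by using the Bernstein presentation and a basis for $\CH_B$ as a left $\CH_A$-module. First I would note that, by the Bernstein presentation of $\CH_B$ relative to the subalgebra $\CH_A$ (which is the affine Hecke algebra attached to the root datum of $\mathfrak{gl}(n)$ sitting inside the type $B_n$ datum), $\CH_B$ is a free left $\CH_A$-module with basis $\{T_w : w \in W_n^{S_n}\}$, where $W_n^{S_n}$ is the set of minimal-length coset representatives for $W_n/S_n$. This is the standard consequence of the fact that $\CH_B = \bigoplus_{w\in W_n} \CA T_w$ together with $\CH_A = \bigoplus_{u \in S_n} \CA T_u$ and the length additivity $\ell(wu) = \ell(w) + \ell(u)$ for $w \in W_n^{S_n}$, $u \in S_n$. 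Hence $I(\pi) = \bigoplus_{w\in W_n^{S_n}} T_w \otimes \pi$ as a vector space, and $\dim I(\pi) = |W_n^{S_n}| \cdot \dim \pi$.

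Next I would compute the $\CA$-weights. Rather than working with the $T_w$ basis directly (whose interaction with $\CA$ is governed by the messy cross-relations of Definition \ref{d:bernstein}), I would pass to the normalized intertwiners: working in $\widehat\CH_B = \CH_B \otimes_\CA \widehat\CA$, the elements $\tau_w$ form a basis of $\widehat\CH_B$ over $\widehat\CA$ for $w \in W_n^{S_n}$ on the appropriate localization, and they satisfy the clean commutation rule $f\tau_w = \tau_w w^{-1}(f)$ for $f \in \widehat\CA$. Therefore, for a weight vector $v \in \pi$ of weight $\nu$, the vector $\tau_w \otimes v$ (when nonzero) is a weight vector of weight $w(\nu)$; and since the $\tau_w$, $w \in W_n^{S_n}$, span $\widehat I(\pi) = I(\pi)\otimes_\CA\widehat\CA$ over $\widehat\CA$, every weight of $\widehat I(\pi)$ is of the form $w(\nu)$ for some weight $\nu$ of $\pi$ and $w \in W_n^{S_n}$. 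Finally, since $I(\pi) \hookrightarrow \widehat I(\pi)$ and $I(\pi)$ is finite-dimensional with $\CA$ acting (so $\CA$ acts through a finite quotient), the weights of $I(\pi)$ as an $\CA$-module coincide with those of $\widehat I(\pi)$; a dimension count using $\dim I(\pi) = |W_n^{S_n}|\cdot\dim\pi$ confirms that all such $w(\nu)$ genuinely occur, giving the lemma.

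The main technical obstacle is the denominator problem with the $\tau_w$: the element $\tau_\alpha$ is only defined in the localized algebra $\widehat\CH_B$, where one inverts $\C G(\alpha)$, so $\tau_w \otimes v$ could a priori be zero on exactly the weight where the denominator vanishes, i.e. when $\nu$ lies on a singular hyperplane for some root in the relevant positive system. To handle this cleanly I would instead argue with the polynomial versions $R_\alpha$ (equation \eqref{e:R_n}) and $R_w$ from Lemma \ref{l:R-alpha}(3): for generic $\nu$ one gets all weights $w(\nu)$ and full dimension, and then one deduces the general case either by a specialization/continuity argument (the weight multiset of $I(\pi)$ varies nicely, and one checks the set of weights is always contained in $\{w(\nu)\}$ by the $R_w$-calculation while the dimension forces equality), or simply by citing the well-known result as the statement does — the lemma is quoted from \cite{BM} and the proof above is the standard one. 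I would phrase the write-up so as to reduce to that reference, with the $\tau_w$/$R_w$ discussion serving only to make the weight statement transparent.

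\begin{proof}
This follows from the Bernstein presentation. As a left $\CH_A$-module, $\CH_B$ is free with basis $\{T_w : w\in W_n^{S_n}\}$, since $\ell(wu)=\ell(w)+\ell(u)$ for $w\in W_n^{S_n}$ and $u\in S_n$; hence $I(\pi)=\bigoplus_{w\in W_n^{S_n}}T_w\otimes\pi$ as a vector space. Passing to the localization $\widehat\CH_B=\CH_B\otimes_\CA\widehat\CA$, the normalized intertwiners $\tau_w$ ($w\in W_n^{S_n}$) give another basis of $\widehat\CH_B$ over $\widehat\CA$, and they satisfy $f\tau_w=\tau_w\,w^{-1}(f)$ for $f\in\widehat\CA$. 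Thus if $v$ is a weight vector of $\pi$ of weight $\nu$, then $\tau_w\otimes v$, when nonzero, is a weight vector of $\widehat I(\pi)=I(\pi)\otimes_\CA\widehat\CA$ of weight $w(\nu)$. Since the $\tau_w$ span $\widehat\CH_B$ over $\widehat\CA$, every weight of $\widehat I(\pi)$, and hence of $I(\pi)$, has this form. Conversely, a dimension count using $\dim I(\pi)=|W_n^{S_n}|\cdot\dim\pi$ together with the polynomial intertwiners $R_w$ of Lemma \ref{l:R-alpha} (which for generic $\nu$ produce $|W_n^{S_n}|\cdot\dim\pi$ linearly independent weight vectors with the weights $w(\nu)$, $w\in W_n^{S_n}$) shows that all the weights $w(\nu)$ occur. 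See \cite{BM} for details.
\end{proof}
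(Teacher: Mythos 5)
You're proving a statement the paper itself does not prove: immediately before the lemma the authors write ``The following lemma is a particular case of a well-known result, see for example \cite{BM},'' and they supply no argument of their own. So there is no ``paper proof'' to compare against; what you have written is the standard argument that \cite{BM} carries out, and your overall plan is correct. Two small points are worth tightening. First, you write that $\CH_B$ is a free \emph{left} $\CH_A$-module with basis $\{T_w\}$; since you want $I(\pi)=\CH_B\otimes_{\CH_A}\pi=\bigoplus_w T_w\otimes\pi$, the relevant statement is that $\CH_B=\bigoplus_{w\in W_n^{S_n}}T_w\CH_A$ is free as a \emph{right} $\CH_A$-module (which indeed follows from $T_{wu}=T_wT_u$ for $\ell(wu)=\ell(w)+\ell(u)$ and $\CH_A=\bigoplus_{u\in S_n}T_u\CA$). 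Second, the ``generic $\nu$ plus specialization'' step is a bit loose, because $\nu$ is a weight of a fixed $\pi$ and may well be singular. The cleaner route, which sidesteps both the $\tau_w$-denominator issue and genericity, is to use the Bruhat filtration of $I(\pi)$ by the subspaces $\sum_{w'\le w}T_{w'}\CH_A\otimes\pi$ ($w,w'\in W_n^{S_n}$, Bruhat order): the Bernstein cross-relations show this is an $\CA$-stable filtration whose $w$-th graded piece is $\CA$-isomorphic to $\pi$ with the $\CA$-action twisted by $w$, so its weights are precisely $\{w(\nu):\nu\text{ a weight of }\pi\}$ with matching multiplicities. Summing over $w\in W_n^{S_n}$ gives the asserted multiset of weights without any genericity hypothesis. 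Your $\tau_w$/$R_w$ discussion is fine as motivation, but the filtration argument is the one that makes the ``all $w(\nu)$ occur, counted with multiplicity'' claim clean in the singular case, and it is the shape of the proof in \cite{BM}.
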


Notice that in particular $\{s_n,s_{n-1}s_n,s_{n-2}s_{n-1}s_n,\dots,s_1 s_2\dotsb s_n\}\subset W_n^{S_n}$. This means that if $\nu=(q^{\lambda_1},q^{\lambda_2},\dots,q^{\lambda_n})$ is a weight of $\pi$, then every $(q^{\lambda_1},\dots,q^{\lambda_{i-1}},q^{-\lambda_n},q^{\lambda_{i+1}},\dots,q^{\lambda_{n-1}}, q^{\lambda_i})$ is a weight of $I(\pi)$.

Before proving our reducibility result, we need a lemma about finite Hecke algebras. Let $\CH_{W_n}$ be the finite Hecke algebra for $W_n$, and $\CH_{S_n}\subset \CH_{W_n}$ the finite Hecke algebra for $S_n$.
Denote by $\CH_{W_1}\subset \CH_{W_n}$ the subalgebra generated by $T_n$.

Notice that the map \[T_i\mapsto T_i,\ 1\le i\le n-1, \quad T_n\mapsto q\] defines a surjective algebra homomorphism $\CH_{W_n}\twoheadrightarrow \CH_{S_n}$. If $\sigma$ is a simple $\CH_{S_n}$-module, let $\sigma\times 0$ denote the simple $\CH_{W_n}$-module obtained by the pull-back.  Similarly, there is another surjective algebra homomorphism $\CH_{W_n}\twoheadrightarrow \CH_{S_n}$ defined by $T_i\mapsto T_i,\ 1\le i\le n-1, \quad T_n\mapsto -1$. Denote by $0\times\sigma$ the resulting $\CH_{W_n}$-module.

\begin{lemma}\label{l:finite} Let $\sigma$ be a simple $\CH_{S_n}$-module and $\sigma_0$ a simple (trivial or Steinberg) $\CH_{W_1}$-module. Suppose \[\phi: \CH_{W_n}\otimes_{\CH_{W_1}}\sigma_0\to \CH_{W_n}\otimes_{\CH_{S_n}}\sigma\] is an $\CH_{W_n}$-homomorphism. Then $\phi$ is not surjective.
\end{lemma}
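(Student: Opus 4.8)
The plan is to distinguish the two induced modules by the multiplicity of one carefully chosen simple $\CH_{W_n}$-module. First I would record two standard facts. Since $q>1$, the finite Hecke algebra $\CH_{W_n}$ has positive real parameters (namely $q$ for $T_1,\dots,T_{n-1}$ and $q^{m_++m_-}$ for $T_n$), and at such parameters it is semisimple; hence every finite-dimensional $\CH_{W_n}$-module is a direct sum of simple modules. Second, for any subalgebra $\CH'\subseteq\CH_{W_n}$ the tensor-hom adjunction (Frobenius reciprocity) gives $\Hom_{\CH_{W_n}}(\CH_{W_n}\otimes_{\CH'}V,\tau)\cong\Hom_{\CH'}(V,\tau|_{\CH'})$, so in the present (semisimple) situation the multiplicity of a simple module $\tau$ in $\CH_{W_n}\otimes_{\CH'}V$ equals $\dim\Hom_{\CH'}(V,\tau|_{\CH'})$.

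The heart of the argument is to exhibit one simple $\CH_{W_n}$-module $\tau_0$ that occurs in $\CH_{W_n}\otimes_{\CH_{S_n}}\sigma$ but not at all in $\CH_{W_n}\otimes_{\CH_{W_1}}\sigma_0$. I would take $\tau_0=\sigma\times 0$ when $\sigma_0$ is the Steinberg module of $\CH_{W_1}$ (so that $\sigma_0(T_n)=-1$), and $\tau_0=0\times\sigma$ when $\sigma_0$ is the trivial module (so that $\sigma_0(T_n)=q^{m_++m_-}$); recall that $\sigma\times 0$ and $0\times\sigma$ are the pull-backs of $\sigma$ along the two surjective algebra homomorphisms $\CH_{W_n}\twoheadrightarrow\CH_{S_n}$, the first sending $T_n$ to $q^{m_++m_-}$ (the eigenvalue of $T_n$ other than $-1$) and the second sending $T_n$ to $-1$. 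In either case one checks: (a) $\tau_0$ is simple, because the pull-back of a simple module along a surjection is simple; (b) $\tau_0|_{\CH_{S_n}}\cong\sigma$, because both homomorphisms $\CH_{W_n}\to\CH_{S_n}$ restrict to the identity on the subalgebra $\CH_{S_n}=\langle T_1,\dots,T_{n-1}\rangle$; and (c) $T_n$ acts on $\tau_0$ by a single scalar, which is \emph{not} $\sigma_0(T_n)$ — by $q^{m_++m_-}$ on $\sigma\times 0$, and by $-1$ on $0\times\sigma$.

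By (b) and Schur's lemma, $\tau_0$ occurs in $\CH_{W_n}\otimes_{\CH_{S_n}}\sigma$ with multiplicity $\dim\Hom_{\CH_{S_n}}(\sigma,\sigma)=1$. By (c), $\tau_0|_{\CH_{W_1}}$ is a direct sum of copies of the one-dimensional $\CH_{W_1}$-module on which $T_n$ acts by a scalar different from $\sigma_0(T_n)$; since $q^{m_++m_-}\neq-1$, that module is not isomorphic to $\sigma_0$, so $\Hom_{\CH_{W_1}}(\sigma_0,\tau_0|_{\CH_{W_1}})=0$ and $\tau_0$ does not occur in $\CH_{W_n}\otimes_{\CH_{W_1}}\sigma_0$ at all. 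If $\phi$ were surjective it would split (a surjection of semisimple modules splits), so $\tau_0$, being a direct summand of $\CH_{W_n}\otimes_{\CH_{S_n}}\sigma$, would also be a direct summand of $\CH_{W_n}\otimes_{\CH_{W_1}}\sigma_0$ — a contradiction. Hence $\phi$ is not surjective. I do not expect a real obstacle here: the one genuine point is the combinatorial observation that $\sigma\times 0$ and $0\times\sigma$ are exactly the two ``extremal'' simple modules restricting to $\sigma$ over $\CH_{S_n}$ while forcing $T_n$ to act by a single scalar; once this test module is identified, the rest is formal, using the semisimplicity of $\CH_{W_n}$ at these parameters to read multiplicities off from $\dim\Hom$, and in particular making no use of the bipartition classification of the simple $\CH_{W_n}$-modules.
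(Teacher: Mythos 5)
Your proof is correct and is essentially the argument the paper intends: you test the two induced modules against $\sigma\times 0$ and $0\times\sigma$, use Frobenius reciprocity to decide which one occurs where, and conclude non-surjectivity. Two small remarks. First, the published proof appears to have the two cases interchanged (it pairs $\sigma_0$ trivial with ``$\sigma\times 0$ cannot appear'' and $\sigma_0$ Steinberg with ``$0\times\sigma$ cannot appear''); your assignment -- $\sigma_0$ trivial excludes $0\times\sigma$, $\sigma_0$ Steinberg excludes $\sigma\times 0$ -- is the correct one, exactly as dictated by Frobenius reciprocity from $\CH_{W_1}$. Second, you invoke semisimplicity of $\CH_{W_n}$ (true here, since $q>1$ and $q^{m_++m_-}>0$ imply nonvanishing of all Schur elements) in order to pass from ``$\tau_0$ is a quotient'' to ``$\tau_0$ is a summand''; the paper's shorter route avoids this by arguing with composition factors of a quotient module, which works without semisimplicity, but your version is equally valid under the stated hypotheses.
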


\begin{proof} We use the fact that both $\sigma\times 0$ and $0\times\sigma$ occur in $\CH_{W_n}\otimes_{\CH_{S_n}}\sigma$.
Suppose first that $\sigma_0$ is the trivial $\CH_{W_1}$-module. Then every simple $\CH_{W_n}$-module that appears in $\CH_{W_n}\otimes_{\CH_{W_1}}\sigma_0$ has a vector on which $T_n$ acts by $-1$. This means that $\sigma\times 0$ cannot appear in $\CH_{W_n}\otimes_{\CH_{W_1}}\sigma_0$.

On the other hand, if $\sigma_0$ is the Steinberg module, the same argument applies to  see that $0\times\sigma$ (on which $T_n$ acts by $q\Id$) does not appear in $\CH_{W_n}\otimes_{\CH_{W_1}}\sigma_0$.
\end{proof}

\begin{theorem}\label{t:reducible} Assume that $m_+\neq 0$ and $m_-\neq 0$.
Let $\pi$ be a simple $\CH_A$-module and suppose that $\pi$ has a weight $(q^{\lambda_1},\dots,q^{\lambda_n})$ such that $q^{\lambda_i}\in\{q^{\pm m_+},-q^{\pm m_-}\}$ for some $i$. Then $I(\pi)$ is reducible.
\end{theorem}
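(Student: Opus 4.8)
The plan is to exploit the weight combinatorics from Lemmas~\ref{l:BM} and \ref{l:rank-one1} together with the finite Hecke algebra statement of Lemma~\ref{l:finite}. First I would reduce, using Lusztig's reduction theorem (Theorem~\ref{t:reduction}, cited later), to a situation controlled by finite Hecke algebras at a suitable point of the spectrum of the center. Concretely, let $(q^{\lambda_1},\dots,q^{\lambda_n})$ be the weight of $\pi$ guaranteed by the hypothesis, with, say, $q^{\lambda_i}\in\{q^{\pm m_+},-q^{\pm m_-}\}$, and after applying an element of $S_n$ we may assume $i=n$, so $q^{\lambda_n}\in\{q^{\pm m_+},-q^{\pm m_-}\}$. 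By Lemma~\ref{l:rank-one1}, this is exactly the reducibility value for the rank-one algebra $\CH_{A_1}(\lambda(\epsilon_n),\lambda^*(\epsilon_n))=\CH_{A_1}(m_++m_-,m_+-m_-)$ sitting at the last node: the intertwining operator $R_n$ has $R_n^2=0$ on the corresponding weight space, by \eqref{e:R_n-squared}. So the last coordinate $\epsilon_n$ is a "wall" in the relevant sense.

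The second step is to argue that reducibility of $I(\pi)$ at such a point is detected by a finite-dimensional problem. Using the geometric/reduction picture, the $\CH_B$-module $I(\pi)$ restricted to the generalized weight spaces for the central character in question is governed by a module over the finite Hecke algebra $\CH_{W_n}$ of type $B_n$ (or a product of smaller such algebras), where the weight $q^{\lambda_n}\in\{q^{\pm m_+},-q^{\pm m_-}\}$ plays the role that forces a Steinberg-or-trivial type $\CH_{W_1}$-factor at the node $s_n$; the $S_n$-part of $\pi$ contributes a simple $\CH_{S_n}$-module $\sigma$. Thus the question of whether $I(\pi)$ has a proper submodule/quotient translates into whether a natural map $\CH_{W_n}\otimes_{\CH_{W_1}}\sigma_0 \to \CH_{W_n}\otimes_{\CH_{S_n}}\sigma$ can be an isomorphism; by Lemma~\ref{l:finite} it cannot even be surjective. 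Hence $I(\pi)$ is reducible. I would also need to check the boundary bookkeeping: that the weight with $q^{\lambda_n}\in\{q^{\pm m_+},-q^{\pm m_-}\}$ actually survives into $I(\pi)$ with the right $T_n$-eigenvector structure — this is where Lemma~\ref{l:BM} and the explicit remark that $\{s_n,s_{n-1}s_n,\dots\}\subset W_n^{S_n}$ come in, guaranteeing that $w(\nu)$ for appropriate $w$ appears, and the $R_n$-operator identities produce the required $T_n$-weight vectors.

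The main obstacle I expect is precisely making rigorous the passage from the infinite-dimensional induced module $I(\pi)$ to the finite Hecke algebra module in Lemma~\ref{l:finite}, i.e., ensuring that the reducibility is genuinely controlled by the "finite part" at the wall. One clean way is: suppose for contradiction $I(\pi)$ is irreducible. Then $\Hom_{\CH_B}(I(\pi'),I(\pi))$ and $\Hom_{\CH_B}(I(\pi),I(\pi''))$ for suitable auxiliary induced modules built from a Steinberg/trivial factor at the node $\epsilon_n$ would force, after passing to the finite algebra via the reduction, a surjection of the type forbidden by Lemma~\ref{l:finite}; this gives the contradiction. Alternatively, one shows directly that the $R_n$-operator, which squares to zero on the weight space of $\nu$ with $q^{\lambda_n}\in\{q^{\pm m_+},-q^{\pm m_-}\}$, produces a nonzero nilpotent endomorphism-like structure incompatible with irreducibility together with semisimplicity of the $\CA$-action would be too strong, so one instead uses that the socle filtration is nontrivial. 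I would adopt the first, $\Hom$-based route, since it most directly reduces to Lemma~\ref{l:finite}, and the remaining steps are then the routine verification that the finite algebra appearing is the type-$B_n$ one with the node $s_n$ carrying a $\CH_{W_1}$-factor of Steinberg/trivial type.
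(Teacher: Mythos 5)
Your high-level cast of characters is correct---the weight combinatorics of Lemma~\ref{l:BM}, the identity $R_n^2=0$ at a weight of the special form, and Lemma~\ref{l:finite} are indeed the three ingredients the paper's proof uses---but the decisive step is missing, and the two routes you propose for supplying it do not actually work.

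First, the invocation of Lusztig's reduction theorem (Theorem~\ref{t:reduction}) is a red herring here: that theorem passes from an arbitrary central character to a \emph{real positive} central character over a possibly smaller \emph{affine} Hecke algebra for a centralizer root datum, not to a finite Hecke algebra; it does not ``detect reducibility by a finite-dimensional problem'' in the way you need. The paper never uses it in the proof of Theorem~\ref{t:reducible}.

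Second, the real content of the proof is to produce an explicit nonzero vector $y\in I(\pi)$ that generates a one-dimensional $\CH_{B_1}$-module $\pi_0$ (recall $\CH_{B_1}$ is generated by $T_n$ and all of $\CA$). This is done by choosing a weight vector $x$ for a weight $\nu$ of $I(\pi)$ with $q^{\lambda_n'}\in\{q^{\pm m_+},-q^{\pm m_-}\}$ (here you do need the statement after Lemma~\ref{l:BM} about $s_n,s_{n-1}s_n,\dots\in W_n^{S_n}$; merely ``applying an element of $S_n$'' is not enough since the weights of $\pi$ need not form an $S_n$-orbit), then setting $y=R_n\cdot x$ if this is nonzero and $y=x$ otherwise. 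The point, which you do not carry out, is the computation using the cross-relation \eqref{e:R_n}: since $R_n\cdot y=0$, one solves for $T_n\cdot y$ and finds it is a \emph{scalar} multiple of $y$, \emph{provided} $q^{-2\lambda_n'}-1\neq 0$---and this is exactly where the hypothesis $m_+\neq 0$, $m_-\neq 0$ is used. Without this step there is no $\CH_{B_1}$-line $\pi_0$, hence no surjection $\CH_B\otimes_{\CH_{B_1}}\pi_0\twoheadrightarrow U_y$ to compare against $I(\pi)$, and Lemma~\ref{l:finite} has nothing to bite on. Your alternative $\Hom$-based route, using ``suitable auxiliary induced modules,'' is not specified at the level needed to see why a surjection of the form forbidden by Lemma~\ref{l:finite} arises; you have in effect deferred the hard part. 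In short: the correct argument is the direct one you set aside---construct the submodule $U_y=\CH_B y$ from the $R_n$-annihilated vector $y$, show $T_n$ acts on $y$ by a scalar, and then (if $U_y$ were all of $I(\pi)$) pass to the induced surjection $\CH_{W_n}\otimes_{\CH_{W_1}}\sigma_0\to\CH_{W_n}\otimes_{\CH_{S_n}}\sigma$, contradicting Lemma~\ref{l:finite}.
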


\begin{proof}
Let $j$ be the largest index such that $q^{\lambda_j}\in\{ q^{\pm m_+}, -q^{\pm m_-}\}$. Using the remark after Lemma \ref{l:BM}, there exists a weight $\nu=(q^{\lambda_1'},\dots,q^{\lambda_n'})$ of $I(\pi)$ such that $q^{\lambda_n'}\in\{ q^{\pm m_+}, -q^{\pm m_-}\}$. Let $0\neq x\in I(\pi)$ be a weight vector for this weight. Then by (\ref{e:R_n-squared})
\begin{equation}
R_n^2\cdot x=0.
\end{equation}
There are two cases.

(a) $y=R_n\cdot x\neq 0$. Then $R_n\cdot y=0$. Notice that $y$ is a weight vector with weight $s_n(\nu)$, hence $\theta_n\cdot y=q^{-\lambda_n'}y$. Using formula (\ref{e:R_n}), we get
\[(q^{-2\lambda'_n}-1) T_n\cdot y=(q^{m_++m_--2\lambda'_n}+q^{m_+-\lambda'_n}-q^{m_- -\lambda_n'}-q^{-2\lambda_n'}) y.
\]
The assumption on $m_+$ and $m_-$ implies that $(q^{-2\lambda'_n}-1)\neq 0$, hence $T_n$ acts by a scalar on $y$. Since $y$ is also a weight vector, it follows that $\bC\<y\>$ is a one-dimensional module $\pi_0$ of the parabolic Hecke subalgebra $\CH_{B_1}$ of type $B_1$.

(b) $R_n\cdot x=0$. Then set for uniformity $y=x$ and the same argument as in (a) applies with the only change being that in the above formula $\lambda_n'$ should be replaced by $-\lambda_n'$.

In conclusion, we have constructed a one-dimensional $\CH_{B_1}$-module $\pi_0=\bC y$, generated by the element $y$, inside the $\CH_B$-module $I(\pi).$ We emphasize that the parabolic subalgebra $\CH_{B_1}$ is generated by $T_{s_n}$ and the full abelian subalgebra $\CA$.  Let $U_y=\CH_B y$ denote the $\CH_B$-submodule of $I(\pi)$ generated by $y$. There is a natural $\CH_B$-homomorphism
\begin{equation}
\phi: \CH_B\otimes_{\CH_{B_1}}\pi_0\twoheadrightarrow U_y, \text{ defined by } h\otimes y\mapsto h\cdot y.
\end{equation}
By definition, this is surjective.

We claim that $U_y$ is a proper submodule of $I(\pi)=\CH_B\otimes_{\CH_A}\pi$. Assume by contradiction that $U_y=I(\pi)$. Then $\phi$ is a surjective $\CH_B$-homomorphism
\[
\phi: \CH_B\otimes_{\CH_{B_1}}\pi_0\twoheadrightarrow I(\pi).
\]
Denote by $\sigma _0$ the $\CH_{B_1}$-module generated by $y$ (the space is the same than the one for $\pi _0$) and by $\sigma $ the $\CH_{S_n}$-module generated by $y$ in the space of $\pi $. These are simple modules. The $\CH_{W_n}$-homomorphism $\CH_{W_n}\otimes_{\CH_{W_1}}\sigma_0\rightarrow \CH_{W_n}\otimes_{\CH_{S_n}} \sigma $ induced by $\phi $ is clearly surjective. But this is a contradiction with Lemma \ref{l:finite}.
\end{proof}

\begin{remark}\label{r:red}
The same reducibility result in Theorem \ref{t:reducible} and the same proof hold when $m_-=0$, but $m_+\neq 0$ and $q^{\lambda_i}\in\{q^{\pm m_+}\}$ for some $i$.
\end{remark}
The remaining case is discussed next.
\begin{theorem}\label{t:reducible2} Assume that  $m_+=m_-=0$.
Let $\pi$ be a simple $\CH_A$-module and suppose that $\pi$ has a weight $\nu=(q^{\lambda_1},\dots,q^{\lambda_n})$ such that $q^{\lambda_i}\in\{\pm 1\}$ for some $i$. Then $I(\pi)$ is reducible.
\end{theorem}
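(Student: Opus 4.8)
The plan is to mirror the proof of Theorem~\ref{t:reducible}, with the one change forced by the degeneration of the intertwining element at $m_+=m_-=0$. In this case the quadratic relation for $T_n$ becomes $(T_n-1)(T_n+1)=0$, so $T_n^2=1$; correspondingly $\CG(\epsilon_n)=1$ and $R_n=T_n(\theta_n^2-1)$, which annihilates every weight vector on which $\theta_n$ acts by $\pm 1$. Thus the mechanism of Theorem~\ref{t:reducible} --- exploiting $R_n^2=0$ and formula~(\ref{e:R_n}) to extract a one-dimensional $\CH_{B_1}$-submodule --- is unavailable, and I would replace it by the observation that, precisely because $T_n^2=1$, the operator $T_n$ acts semisimply on the relevant weight space.

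First I would use the remark after Lemma~\ref{l:BM} to pass from the given weight $(q^{\lambda_1},\dots,q^{\lambda_n})$ of $\pi$, with $q^{\lambda_i}\in\{\pm1\}$ for some $i$, to a weight $\nu'=(q^{\lambda_1'},\dots,q^{\lambda_n'})$ of $I(\pi)$ with $q^{\lambda_n'}\in\{\pm1\}$. Since $q^{\lambda_n'}$ is its own inverse we have $s_n(\nu')=\nu'$, and the cross-relations (for $m_+=m_-=0$: $\theta_jT_n=T_n\theta_j$ for $j<n$, and $\theta_nT_n=T_n\theta_n^{-1}$) then show that $T_n$ preserves the nonzero, finite-dimensional $\nu'$-weight space $V_{\nu'}$ of $I(\pi)$. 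As $T_n^2=1$ on $V_{\nu'}$, this space splits into the $(\pm1)$-eigenspaces of $T_n$; pick $0\neq y\in V_{\nu'}$ with $T_ny=\eta y$, $\eta\in\{1,-1\}$. Then $\bC y$ is a one-dimensional module $\pi_0$ over the parabolic subalgebra $\CH_{B_1}=\langle T_n,\CA\rangle$, and $\pi_0|_{\CH_{W_1}}=[T_n=\eta]$ is the ``trivial'' module if $\eta=1$ (here $q^{\lambda(\epsilon_n)}=q^{m_++m_-}=1$) and the ``Steinberg'' module if $\eta=-1$.

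From here the argument is as in Theorem~\ref{t:reducible}. Set $U_y=\CH_B\cdot y\subseteq I(\pi)$; it is nonzero and $\phi:\CH_B\otimes_{\CH_{B_1}}\pi_0\to U_y$, $h\otimes y\mapsto h\cdot y$, is surjective. Assume for contradiction that $U_y=I(\pi)$. Restricting $\phi$ along $\CH_{W_n}\subset\CH_B$ and using that affine parabolic induction restricts to finite parabolic induction, i.e. $(\CH_B\otimes_{\CH_{B_1}}\pi_0)|_{\CH_{W_n}}\cong\CH_{W_n}\otimes_{\CH_{W_1}}(\pi_0|_{\CH_{W_1}})$ and $(\CH_B\otimes_{\CH_A}\pi)|_{\CH_{W_n}}\cong\CH_{W_n}\otimes_{\CH_{S_n}}(\pi|_{\CH_{S_n}})$, one obtains a surjective $\CH_{W_n}$-homomorphism $\CH_{W_n}\otimes_{\CH_{W_1}}\sigma_0\twoheadrightarrow\CH_{W_n}\otimes_{\CH_{S_n}}(\pi|_{\CH_{S_n}})$ with $\sigma_0=\pi_0|_{\CH_{W_1}}$. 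Composing with the surjection onto $\CH_{W_n}\otimes_{\CH_{S_n}}\sigma$ induced by any simple quotient $\sigma$ of the finite-dimensional module $\pi|_{\CH_{S_n}}$ yields a surjective $\CH_{W_n}$-map $\CH_{W_n}\otimes_{\CH_{W_1}}\sigma_0\twoheadrightarrow\CH_{W_n}\otimes_{\CH_{S_n}}\sigma$ with $\sigma_0$ trivial or Steinberg and $\sigma$ simple, contradicting Lemma~\ref{l:finite}. Therefore $U_y$ is a proper nonzero submodule of $I(\pi)$, so $I(\pi)$ is reducible.

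The one point requiring genuine care --- already implicit in the proof of Theorem~\ref{t:reducible} --- is the compatibility invoked in the previous paragraph: that $\CH_B$ is free as a right module over each of the standard parabolic subalgebras $\CH_{B_1}$ and $\CH_A$, with a basis of elements $T_w$ ($w$ running over minimal-length coset representatives) lying in the finite Hecke algebra $\CH_{W_n}$, so that restriction to $\CH_{W_n}$ of the parabolically induced modules is computed by the corresponding finite-type induction. This is the only step where a (routine) structural check is needed; once $T_n^2=1$ has been used to produce $y$, everything else is formal.
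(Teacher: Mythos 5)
Your proof is correct and takes essentially the same route as the paper's. The only cosmetic difference is in producing the $T_n$-eigenvector $y$: the paper argues by cases (either $T_n\cdot x$ is proportional to $x$, or one takes $y=T_n\cdot x\pm x$), while you observe directly that $T_n$ preserves the weight space and, being an involution there, is diagonalizable; both yield the same one-dimensional $\CH_{B_1}$-module and conclude via Lemma~\ref{l:finite} exactly as in Theorem~\ref{t:reducible}.
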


\begin{proof}
Without loss of generality, as in the proof of Theorem \ref{t:reducible}, we may assume that $i=n$.
Suppose that $m_+=m_-=0$. Then the last relation in the Bernstein presentation of $\CH_B$ becomes simply
\begin{equation}
\theta_n T_n=T_n \theta_n^{-1}.
\end{equation}
We remark that also in the finite Hecke algebra, the relation for $T_n$ is just $T_n^2=1$.

Let $x\neq 0$ be a weight vector in $I(\pi)$ with weight $\nu$. Because in our situation, $\theta_z T_n=T_n \theta_{s_n(z)}$ for all $z\in X$, we see that $T_n\cdot x$ is also a weight vector with weight $s_n(\nu)=\nu$. Since $T_n$ is invertible, $T_n\cdot x\neq 0$. If $T_n\cdot x$ is a multiple of $x$ (necessarily $\pm x$), then we set $y=x$ and we are in the same situation as in the last paragraph of the proof of Theorem \ref{t:reducible}, hence Lemma \ref{l:finite} may be used to finish the argument.

Otherwise, $x$ and $T_n\cdot x$ are linearly independent. Set
\[y=T_n\cdot x - x,\quad  \text{(also, $y=T_n\cdot x+x$ would work equally well)},
\]
which is a nonzero weight vector with weight $\nu$ and which transforms like the Steinberg (sign) representation of $\CH_{W_1}(m_+=0)\cong \bC[\bZ/2]$. Now we can finish the proof in the same way as in the proof of Theorem \ref{t:reducible}.

\end{proof}

\subsection{Induced modules: disjoint supports}\label{s:3.5} We return to the general case of the affine Hecke algebra $\CH_B=\CH_B(m_+,m_-)$. We now consider the opposite situation for an induced $\CH_B$-module $I(\pi)$, where $\pi$ is a simple $\CH_A$-module. Namely, we assume that the central character of $\pi$ (and hence of $I(\pi)$) is a Weyl group orbit of $(q^{\lambda_1},\dots,q^{\lambda_n})$, where $q^{\lambda_i}\notin \{q^{\pm m_+}, -q^{\pm m_-}\}$ for any $i$. We say that an $\CH_A$-module (or an $\CH_B$-module) has {\it separated support} if its central character satisfies this condition.

Consider the nonzero element of $\CA$:
\begin{equation}
\Omega=\prod_{i=1}^n (q^{m_+}-\theta_i)(q^{m_-}+\theta_i)(q^{m_+}-\theta_i^{-1})(q^{m_-}+\theta_i^{-1}).
\end{equation}
Since $\Omega$ is $W_n$-invariant, we have $\Omega\in\C Z$ and $\Omega\in Z(\CH_A)=\CA^{S_n}$ as well. Define
\begin{equation}
\wti{\C Z}=\C Z[\Omega^{-1}],\  \wti{Z(\CH_A)}=\CA^{S_n}[\Omega^{-1}]\quad\wti \CH_A=\CH_A\otimes_{Z(\CH_A)} \wti{Z(\CH_A)},\text{ and } \wti \CH_B=\CH_B\otimes_{\C Z} \wti{\C Z}.
\end{equation}
It is clear that there is a natural equivalence of categories between $\CH_A$-modules (respectively, $\CH_B$-modules) with separated support and $\wti\CH_A$-modules (respectively, $\wti\CH_B$-modules). We also note that $\wti H_B$ is a subalgebra of $\widehat \CH_B$.

The element $\tau_n=R_n(\theta_n q^{m_+}-1)^{-1}(\theta_n q^{m_-}+1)^{-1}$ defined previously lives in $\wti\CH_B$. Since $\tau_n$ involves in its definition only the elements $T_n$ and $\theta_n$, it is immediate that $\tau_n$ commutes with all $T_i$, $1\le i\le n-2$.

\begin{lemma}\label{l:commute-T}
In $\wti H_B$, $\tau_n T_{n-1} \tau_n T_{n-1}=T_{n-1} \tau_n T_{n-1} \tau_n$.
\end{lemma}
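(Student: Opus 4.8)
The plan is to verify the braid relation $\tau_n T_{n-1}\tau_n T_{n-1} = T_{n-1}\tau_n T_{n-1}\tau_n$ by reducing it to the braid relation $\tau_n \tau_{n-1}\tau_n\tau_{n-1} = \tau_{n-1}\tau_n\tau_{n-1}\tau_n$ among the normalized intertwiners $\tau_\alpha$, which we already know holds by \cite[Proposition 5.2]{Lu} (stated in the excerpt right after the definition of $\tau_\alpha$). The point is that $\tau_{n-1}$ and $T_{n-1}$ differ by an element of $\widehat\CA$, and conjugation by such elements is controlled. Concretely, from the definition $\tau_\alpha = (T_{s_\alpha}+1)\CG(\alpha)^{-1}-1$ we have, for the short root $\alpha_{n-1} = \epsilon_{n-1}-\epsilon_n$ (which has $\alpha^\vee\notin 2X^\vee$), the relation $T_{n-1} = (\tau_{n-1}+1)\CG(\alpha_{n-1}) - 1 = \tau_{n-1}\CG(\alpha_{n-1}) + (\CG(\alpha_{n-1})-1)$, where $\CG(\alpha_{n-1}) = \frac{\theta_{\alpha_{n-1}} q - 1}{\theta_{\alpha_{n-1}} - 1}\in\widehat\CA$ with $\theta_{\alpha_{n-1}} = \theta_{n-1}\theta_n^{-1}$.

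The key structural inputs are: (1) $\tau_n$ commutes with $\theta_{n-1}$ and with $\theta_i$ for $i\le n-2$ — indeed $\tau_n$ involves only $T_n$ and $\theta_n$, and $\theta_n T_n = T_n\theta_n^{-1}$ forces $\tau_n$ to commute with everything fixed by $s_n$, in particular with $\theta_{n-1}$ and $\theta_n\theta_{n-1}^{-1}$ is not relevant here; what matters is $s_n$ fixes $\epsilon_{n-1}$, so $\tau_n$ commutes with $\theta_{\alpha_{n-1}}$ up to the $s_n$-action — more precisely $s_n(\alpha_{n-1}) = \epsilon_{n-1}+\epsilon_n$, so one must be slightly careful; and (2) the commutation rule $f\tau_w = \tau_w\, w^{-1}(f)$ for $f\in\widehat\CA$. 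First I would substitute $T_{n-1} = \tau_{n-1}\CG(\alpha_{n-1}) + (\CG(\alpha_{n-1})-1)$ into both sides of the claimed identity and expand, pushing all $\widehat\CA$-coefficients to one side using $f\tau_n = \tau_n s_n(f)$ and the braid relation for $\tau_n,\tau_{n-1}$. Since both sides are built from the same data, the $\widehat\CA$-coefficients that emerge on the two sides must be compared; because $\tau_n\tau_{n-1}\tau_n\tau_{n-1} = \tau_{n-1}\tau_n\tau_{n-1}\tau_n$ already holds, the discrepancy between $\tau_n T_{n-1}\tau_n T_{n-1}$ and $T_{n-1}\tau_n T_{n-1}\tau_n$ reduces to an identity in $\widehat\CA$ which can be checked by a direct (but finite) computation in the rank-two subalgebra generated by $\theta_{n-1},\theta_n, \tau_{n-1},\tau_n$ — equivalently in the affine Hecke algebra of type $B_2$ localized at $\widehat\CA$.

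An alternative, cleaner route I would actually prefer: work entirely inside $\widehat\CH_B$, where all the $\tau_\alpha$ are defined and satisfy the braid relations, and observe that $T_{n-1}\tau_n T_{n-1}\tau_n$ and $\tau_n T_{n-1}\tau_n T_{n-1}$ can each be rewritten using $T_{n-1} = (T_{n-1}+1)-1$ and the fact that $T_{n-1}+1 = (\tau_{n-1}+1)\CG(\alpha_{n-1})$. Then the claim becomes a consequence of: $\tau_n$ commutes with $\CG(\alpha_{n-1})$-type elements appropriately twisted, and the $\tau$-braid relation. The main obstacle, and the step deserving the most care, is bookkeeping the $s_n$-twists on the $\widehat\CA$-coefficients: $s_n$ exchanges $\epsilon_n\leftrightarrow -\epsilon_n$ and fixes $\epsilon_{n-1}$, so $s_n(\theta_{\alpha_{n-1}}) = s_n(\theta_{n-1}\theta_n^{-1}) = \theta_{n-1}\theta_n = \theta_{\epsilon_{n-1}+\epsilon_n}$, which is a genuinely different element of $\CA$; thus $\tau_n$ does not literally commute with $\CG(\alpha_{n-1})$, and one must track how $\CG(\epsilon_{n-1}-\epsilon_n)$ and $\CG(\epsilon_{n-1}+\epsilon_n)$ interact across $\tau_n$. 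Once this twisting is organized correctly, the identity should fall out by matching terms against the known $\tau$-braid relation, with no residual computation beyond simplifying a rational function in $\theta_{n-1},\theta_n,q$.
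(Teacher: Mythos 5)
Your approach is correct in spirit and lands on essentially the same mechanism as the paper: reduce the braid relation for $\{T_{n-1},\tau_n\}$ to the known braid relation for normalized intertwiners, then compare the $\widehat\CA$-coefficients that emerge when pushing things past $\tau_n$. The difference is which normalized intertwiner you use for the $A_1$-part at position $n-1$. You propose to substitute $T_{n-1}=\tau_{n-1}\,\CG(\alpha_{n-1})+(\CG(\alpha_{n-1})-1)$ and work with the $\tau$-braid relation; the paper instead starts from the $R$-braid relation $R_{n-1}R_n R_{n-1}R_n=R_n R_{n-1}R_n R_{n-1}$ (Lemma~\ref{l:R-alpha}(3)), divides out $R_n$'s scalar factor to obtain $R_{n-1}\tau_n R_{n-1}\tau_n=\tau_n R_{n-1}\tau_n R_{n-1}$ in $\wti\CH_B$, and then expands $R_{n-1}=T_{n-1}(\theta_{\alpha_{n-1}}-1)-(q-1)\theta_{\alpha_{n-1}}$. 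The advantage of using $R_{n-1}$ is that the $\CA$-coefficient is polynomial, so all the bookkeeping remains inside $\CH_B$ (and hence $\wti\CH_B$) and the only localization needed comes at the very end, when one divides both sides by $(\theta_{\alpha_{n-1}}-1)(\theta_{\bar\alpha_{n-1}}-1)$ in $\widehat\CH_B$ and observes that the quotients already lie in $\wti\CH_B$. Your route with $\tau_{n-1}\in\widehat\CH_B$ requires carrying the rational coefficient $\CG(\alpha_{n-1})\in\widehat\CA$ and its $s_n$-twist $\CG(\bar\alpha_{n-1})$ through every step, which is workable but heavier; and in your sketch you have correctly identified exactly this $s_n$-twisting as the delicate point, though you have not actually carried out the computation. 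One detail to keep in mind whichever route you take: after canceling, both sides are a priori only equal in $\widehat\CH_B$, and you must still observe (as the paper does) that the four-fold products $\tau_n T_{n-1}\tau_n T_{n-1}$ and $T_{n-1}\tau_n T_{n-1}\tau_n$ actually lie in the subalgebra $\wti\CH_B$, so the identity descends there.
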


\begin{proof}
By Lemma \ref{l:R-alpha}, $R_{n-1}R_nR_{n-1}R_n= R_n R_{n-1} R_n R_{n-1}$, which also implies that
\[R_{n-1}\tau_n R_{n-1} \tau_n=\tau_n R_{n-1} \tau_n R_{n-1}.
\]
Here $R_{n-1}=T_{n-1}(\theta_{\alpha_{n-1}}-1)-(q-1)\theta_{\alpha_{n-1}}$. Denote $\bar\alpha_{n-1}=s_n(\alpha_{n-1})$. Notice that $s_{n-1}(\bar\alpha_{n-1})=\bar\alpha_{n-1}$, hence $T_{n-1}$ commutes with $\theta_{\bar\alpha_{n-1}}.$ We use this and the relation $\theta_x\tau_n=\tau_n\theta_{s_n(x)}$ to compute the left hand side of the formula directly:
\begin{equation}
\begin{aligned}
R_{n-1}\tau_nR_{n-1}\tau_n&=T_{n-1}\tau_n T_{n-1} \tau_n (\theta_{\alpha_{n-1}}-1)(\theta_{\bar\alpha_{n-1}}-1)-(q-1)\zeta,
\text{ where }\\
\zeta&=\tau_n T_{n-1} \tau_n \theta_{\alpha_{n-1}}(\theta_{\bar\alpha_{n-1}}-1)+ T_{n-1} (\theta_{\alpha_{n-1}}-1) \theta_{\bar\alpha_{n-1}}-(q-1) \theta_{\alpha_{n-1}}\theta_{\bar\alpha_{n-1}}.
\end{aligned}
\end{equation}
Since $\tau_n^2=1$ to obtain the right hand side, we can conjugate the above formula by $\tau_n$. Remark that $\tau_n\zeta\tau_n=\zeta$, which then leads to the identity:
\begin{equation}
T_{n-1}\tau_n T_{n-1} \tau_n (\theta_{\alpha_{n-1}}-1)(\theta_{\bar\alpha_{n-1}}-1)=\tau_nT_{n-1}\tau_n T_{n-1} (\theta_{\alpha_{n-1}}-1)(\theta_{\bar\alpha_{n-1}}-1).
\end{equation}
In $\widehat \CH_B$, we may divide both sides by $(\theta_{\alpha_{n-1}}-1)(\theta_{\bar\alpha_{n-1}}-1)$, which means that $T_{n-1}\tau_n T_{n-1} \tau_n =\tau_nT_{n-1}\tau_n T_{n-1}$ holds in $\widehat\CH_B$, but then also in $\wti\CH_B$ since the two sides are in $\wti\CH_B$ (which is a subalgebra of $\widehat\CH_B$).
\end{proof}

Define $\wti\CH_n^0$ to be the $\CA$-subalgebra of $\wti\CH_B$ generated by $T_1,\dots,T_{n-1}$, $\tau_n$.

\begin{remark}\label{r:diff}
The algebra $\wti\CH_n^0$ is not equal to $\wti\CH_B$. The reason is that in order to write $T_n$ in terms of $\tau_n$, one needs to invert $\theta_n^2-1$.
\end{remark}

Let $\CA[(\theta^2-1)^{-1}]$ denote the extension of $\CA$ where we adjoin $(\theta_i^2-1)^{-1}$ for all $1\le i\le n.$ Define $\wti\CH_n^0[(\theta^2-1)^{-1}]=\wti\CH_n^0\otimes_\CA \CA[(\theta^2-1)^{-1}]$, $\wti\CH_B[(\theta^2-1)^{-1}]=\wti\CH_B\otimes_\CA \CA[(\theta^2-1)^{-1}]$ etc.

\begin{proposition}\label{p:isom}The algebras $\wti\CH_n^0[(\theta^2-1)^{-1}]$ and $\wti\CH_B[(\theta^2-1)^{-1}]$ are isomorphic to each other, and they are naturally isomorphic to $\CH_B(0,0)[(\theta^2-1)^{-1}]$, where $\CH_B(0,0)$ is the affine Hecke algebra of type $B_n$ with parameters $m_+=m_-=0$.
\end{proposition}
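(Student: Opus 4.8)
The plan is to prove the two isomorphisms separately and then to notice they are compatible. First I would establish that $\wti\CH_n^0[(\theta^2-1)^{-1}] = \wti\CH_B[(\theta^2-1)^{-1}]$ as subalgebras of $\widehat\CH_B$. One inclusion is clear since $T_1,\dots,T_{n-1}$ and $\tau_n$ all lie in $\wti\CH_B \subseteq \widehat\CH_B$, and adjoining $(\theta_i^2-1)^{-1}$ is harmless. For the reverse inclusion, the only generator of $\wti\CH_B$ not obviously in $\wti\CH_n^0[(\theta^2-1)^{-1}]$ is $T_n$; but from the definition $\tau_n = (T_n+1)\C G(\epsilon_n)^{-1}-1$ (equivalently $R_n = (T_n+1)(\theta_n^2-1) - (\theta_n q^{m_+}-1)(\theta_n q^{m_-}+1)$), one solves for $T_n$ in terms of $\tau_n$, $\theta_n^{\pm 1}$ and $(\theta_n^2-1)^{-1}$, exactly as flagged in Remark \ref{r:diff}. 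Since $(\theta_n^2-1)^{-1}$ is available in the localized algebra, $T_n \in \wti\CH_n^0[(\theta^2-1)^{-1}]$, giving the first isomorphism (in fact an equality).

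Next I would build the isomorphism with $\CH_B(0,0)[(\theta^2-1)^{-1}]$. The idea is that both algebras are generated over $\CA[(\theta^2-1)^{-1}]$ by elements satisfying the braid relations of type $B_n$ together with quadratic relations, and the parameter $(m_+,m_-)$ has been "washed out'' by passing to $\tau_n$. Concretely, in $\CH_B(0,0)$ the element $T_n$ satisfies $T_n^2=1$ and $\theta_x T_n = T_n \theta_{s_n(x)}$ (the last cross-relation degenerates since $q^{m_+}=q^{m_-}=1$), so $T_n$ already plays the role of "$\tau_n$'' there; write $\tau_n^{(0)}$ for it. I would define a map sending $T_i \mapsto T_i$ for $1\le i\le n-1$, $\theta_x \mapsto \theta_x$, and $\tau_n \mapsto \tau_n^{(0)}$, and check it respects all defining relations: the finite braid relations among $T_1,\dots,T_{n-1}$ are identical on both sides; $\tau_n^2 = 1 = (\tau_n^{(0)})^2$ by \cite[Proposition 5.2]{Lu}; the cross-relation $\theta_x \tau_n = \tau_n \theta_{s_n(x)}$ holds on both sides (formula (2.5) for $\tau_n$, and the degenerate relation for $\tau_n^{(0)}$); and the mixed braid relation $T_{n-1}\tau_n T_{n-1}\tau_n = \tau_n T_{n-1}\tau_n T_{n-1}$ holds on the source by Lemma \ref{l:commute-T} and on the target because $\CH_B(0,0)$ is a genuine type-$B_n$ Hecke algebra. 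One should also note $\tau_n$ commutes with $T_1,\dots,T_{n-2}$ on both sides (immediate from the definition). To see this map is an isomorphism after inverting $(\theta^2-1)$, I would exhibit the inverse explicitly: on the target, solve for $T_n^{(0)}$... but here $T_n^{(0)} = \tau_n^{(0)}$ already, while on the source $T_n$ is recovered from $\tau_n$ by the formula above — so the inverse sends $\tau_n^{(0)}\mapsto \tau_n$ and $T_i,\theta_x$ to themselves, and one checks it is well-defined by the same relation-checking. Alternatively, a clean ``coordinate-free'' argument: $\wti\CH_n^0$ has an $\CA$-basis $\{T_w : w\in W_n\}$ with $T_w$ built from $T_1,\dots,T_{n-1},\tau_n$ via a reduced expression (the $\tau_w$-type basis), and the map carries it bijectively to the analogous basis of $\CH_B(0,0)[(\theta^2-1)^{-1}]$, so it is an $\CA[(\theta^2-1)^{-1}]$-module isomorphism and hence an algebra isomorphism.

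For the write-up I would therefore: (1) recall the solved-for expression of $T_n$ in terms of $\tau_n$ and $(\theta_n^2-1)^{-1}$ to get $\wti\CH_n^0[(\theta^2-1)^{-1}] = \wti\CH_B[(\theta^2-1)^{-1}]$; (2) list the defining relations of $\CH_B(0,0)[(\theta^2-1)^{-1}]$ in the generators $T_1,\dots,T_{n-1},T_n,\theta_x^{\pm 1}$, observing the degeneration of the last cross-relation to $\theta_n T_n = T_n\theta_n^{-1}$; (3) define the correspondence $T_i\leftrightarrow T_i$, $\theta_x\leftrightarrow\theta_x$, $\tau_n\leftrightarrow T_n$ and verify each relation matches, citing Lemma \ref{l:commute-T} for the crucial mixed braid relation and \cite[Proposition 5.2]{Lu} for $\tau_n^2=1$; (4) conclude via the matching $T_w$-bases that the correspondence is an isomorphism.

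The main obstacle I anticipate is (3)–(4): one must be careful that the relations presented really are a full set of defining relations for $\CH_B(0,0)[(\theta^2-1)^{-1}]$ — i.e. that no relation is lost or gained by the localization — and that the braid relation of Lemma \ref{l:commute-T} together with the cross-relations and quadratic relations genuinely suffice to present $\wti\CH_n^0$ (equivalently, that $\{T_w\}_{w\in W_n}$ is an $\CA[(\theta^2-1)^{-1}]$-basis, with no collapse). This is essentially a Bernstein–Lusztig-style PBW/straightening argument; the parameter-independence then drops out because the only place $(m_+,m_-)$ entered the structure constants was through $\C G(\epsilon_n)$, which has been absorbed into $\tau_n$.
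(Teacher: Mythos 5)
Your proposal follows essentially the same route as the paper: Remark \ref{r:diff} handles the first equality (solve for $T_n$ once $(\theta_n^2-1)^{-1}$ is available), and the second isomorphism is the assignment $T_i\mapsto T_i$, $\theta_j\mapsto\theta_j$, $\tau_n\mapsto T_n$, with Lemma \ref{l:commute-T} supplying the mixed braid relation after the factor $(\theta_{\alpha_{n-1}}-1)(\theta_{\bar\alpha_{n-1}}-1)$ is cancelled. The paper's own proof is terser — it simply declares that "the relations between generators are all respected" — so your explicit relation check ($\tau_n^2=1$, the degenerate cross-relation $\theta_n T_n = T_n\theta_n^{-1}$ on the target, commutation with $T_1,\dots,T_{n-2}$) and the closing $T_w$-basis argument for bijectivity are additional detail, not a different method; the caveat you raise at the end about needing a genuine PBW/straightening argument is a real point that the paper silently elides.
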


\begin{proof}
The first claim follows from Remark \ref{r:diff}. For the second, the isomorphism between $\wti\CH_n^0[(\theta^2-1)^{-1}]$ and $\CH_B(0,0)[(\theta^2-1)^{-1}]$ is given by $T_i\mapsto T_i$, $1\le i\le n-1$, $\tau_n\mapsto T_n$, and $\theta_j\mapsto \theta_j$, $1\le j\le n$. Using Lemma \ref{l:commute-T}, where now we can cancel $(\theta_{\alpha_{n-1}}-1)(\theta_{\bar\alpha_{n-1}}-1)$, we see that the relations between generators are all respected.
\end{proof}

Moreover, the affine Hecke algebra  $\CH_A$  of $\mathfrak{gl}(n)$  is a subalgebra of $\wti\CH_n^0$. To summarize, we have the inclusions:
\begin{equation}
\CH_A\subset \wti\CH_n^0\subset \wti\CH_B.
\end{equation}

\begin{theorem}\label{t:reducible3}
 Let $\CH_B\supset\CH_A$ be the affine Hecke algebra of type $B_n$ with parameters $m_+$ and $m_-$. Let $\pi$ be a simple $\CH_A$-module with separated support. If $\pi$ has a central character $(q^{\lambda_1},\dots,q^{\lambda_n})$ such that $q^{\lambda_i}\notin\{\pm 1\}$ for all $i$, then $I(\pi)$ is reducible if and only if $\wti\CH_n^0\otimes_{\CH_A}\pi$ (or equivalently, $\CH_B(0,0)\otimes _{\CH_A}\pi$) is reducible.
\end{theorem}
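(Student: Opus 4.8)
The plan is to reduce the reducibility question for $I(\pi)=\CH_B\otimes_{\CH_A}\pi$ to the corresponding question for the ``deformed'' algebra $\wti\CH_n^0$ (equivalently $\CH_B(0,0)$), by interpolating through $\widehat\CH_B$ and using the localization $(\theta^2-1)^{-1}$. The key point is that since $\pi$ has separated support and central character avoiding $\{\pm1\}$, we may work over the appropriate localized base rings without losing information: the hypothesis $q^{\lambda_i}\notin\{q^{\pm m_+},-q^{\pm m_-}\}$ means $\pi$ is naturally a $\wti\CH_A$-module (the element $\Omega$ acts invertibly), and the hypothesis $q^{\lambda_i}\notin\{\pm1\}$ means that after inducing, every weight of $I(\pi)$ — which by Lemma \ref{l:BM} is of the form $w(\nu)$ for $w\in W_n^{S_n}$, hence a permutation-and-inversion of $(q^{\lambda_1},\dots,q^{\lambda_n})$ — still avoids $\pm1$, so $\theta_i^2-1$ acts invertibly on $I(\pi)$ as well.

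First I would observe that, because $\theta_i^2-1$ acts invertibly on $\pi$ and on all weights appearing in the induced module, the natural maps
\[
\CH_B\otimes_{\CH_A}\pi\ \longrightarrow\ \CH_B[(\theta^2-1)^{-1}]\otimes_{\CH_A[(\theta^2-1)^{-1}]}\pi,\qquad
\wti\CH_n^0\otimes_{\CH_A}\pi\ \longrightarrow\ \wti\CH_n^0[(\theta^2-1)^{-1}]\otimes_{\CH_A[(\theta^2-1)^{-1}]}\pi
\]
are isomorphisms of modules (the localization is already ``built in'' on the level of the module). Then I would invoke Proposition \ref{p:isom}: over the localized base, $\wti\CH_B[(\theta^2-1)^{-1}]=\wti\CH_n^0[(\theta^2-1)^{-1}]$, and this algebra is isomorphic to $\CH_B(0,0)[(\theta^2-1)^{-1}]$ via $T_i\mapsto T_i$ ($i\le n-1$), $\tau_n\mapsto T_n$, $\theta_j\mapsto\theta_j$. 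Under this isomorphism the $\CH_A$-module structure on $\pi$ is preserved (since $\CH_A\subset\wti\CH_n^0$ and the isomorphism is the identity on $\CH_A$), so induction commutes with it: $\wti\CH_B[(\theta^2-1)^{-1}]\otimes_{\CH_A[(\theta^2-1)^{-1}]}\pi$ corresponds to $\CH_B(0,0)[(\theta^2-1)^{-1}]\otimes_{\CH_A[(\theta^2-1)^{-1}]}\pi$.

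The remaining step — and the one I expect to be the main obstacle — is to identify $\wti\CH_B$-modules (over the separated-support locus) with $\CH_B$-modules and to check that passing between $\CH_B$ and $\wti\CH_n^0$ does not change whether $I(\pi)$ is reducible. For the first half I would use the remark in Section \ref{s:3.5} that there is a natural equivalence of categories between $\CH_B$-modules with separated support and $\wti\CH_B$-modules; since $I(\pi)$ has separated support whenever $\pi$ does (same central character), reducibility of $I(\pi)$ as $\CH_B$-module is equivalent to reducibility as $\wti\CH_B$-module. For the second half, the subtle point is that $\wti\CH_n^0$ is a proper subalgebra of $\wti\CH_B$ (Remark \ref{r:diff}): one must check that an $\wti\CH_B$-submodule of $I(\pi)$ is the same data as an $\wti\CH_n^0$-submodule. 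This holds because, on $I(\pi)$, $T_n$ can be recovered from $\tau_n$ and $\theta_n$ — precisely, from $R_n=\tau_n(\theta_nq^{m_+}-1)(\theta_nq^{m_-}+1)$ together with \eqref{e:R_n}, which expresses $T_n(\theta_n^2-1)$ in terms of $R_n$ and $\theta_n$, and $\theta_n^2-1$ is invertible on $I(\pi)$ by the central character hypothesis. Hence the $\wti\CH_B$-action on $I(\pi)$ is already determined by the $\wti\CH_n^0$-action, so the two module structures have literally the same submodule lattice, and $I(\pi)$ is reducible over $\CH_B$ iff $\wti\CH_n^0\otimes_{\CH_A}\pi$ is reducible; by Proposition \ref{p:isom} this is equivalent to reducibility of $\CH_B(0,0)\otimes_{\CH_A}\pi$, which completes the proof.
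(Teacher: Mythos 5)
Your proof is correct and follows essentially the same line as the paper's: you localize at $\prod_i(\theta_i^2-1)$ and at $\Omega$, apply Proposition~\ref{p:isom} to identify the localized algebras, and deduce that the two induced modules coincide after localization, which is invisible on the given modules since the localized elements already act invertibly. The only difference is that the paper's proof is terser — it writes $\CH_B[(\theta^2-1)^{-1}]\cong\wti\CH_n^0[(\theta^2-1)^{-1}]$ and cites Proposition~\ref{p:isom} directly, whereas you correctly insert the intermediate localization at $\Omega$ (needed because Proposition~\ref{p:isom} is stated for $\wti\CH_B$, not $\CH_B$) and spell out why the natural maps to the localized inductions are isomorphisms and why an $\wti\CH_n^0$-submodule is automatically a $\wti\CH_B$-submodule on $I(\pi)$; these are details the paper takes for granted, and your additions are accurate.
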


\begin{proof} We have that $\pi$ does not have $\pm 1$ in its central character. We may invert therefore all $\theta_i^2-1$.  The module $\pi$ can be viewed as a module for $\CA[(\theta^2-1)^{-1}]$ and we have:
\begin{equation}
I(\pi)=\CH_B[(\theta^2-1)^{-1}]\otimes_{\CA[(\theta^2-1)^{-1}]}\pi.
\end{equation}
But now $\CH_B[(\theta^2-1)^{-1}]$ is naturally isomorphic to $\wti\CH_n^0[(\theta^2-1)^{-1}]$ by Proposition \ref{p:isom}. Hence
\begin{equation}
I(\pi)=\wti\CH_n^0[(\theta^2-1)^{-1}]\otimes_{\CA[(\theta^2-1)^{-1}]}\pi=\wti\CH_n^0\otimes_{\CH_A}\pi.
\end{equation}
This proves the claim.
\end{proof}

Theorem \ref{t:reducible3} says, in particular, that if $\pi$ has separated support not containing $\pm 1$, then the reducibility of $I(\pi)$ only depends on $\pi$ and not on the labels $m_+,m_-$ of the Hecke algebra of $B_n$. This proves in the regular case of separated support not containing $\pm 1$, the following theorem, which is a precise Hecke algebras formulation of \cite[Conjecture 1.3]{LT}.

\begin{theorem}\label{t:lapid}
Consider the Hecke algebras $\CH_B(m_1^+,m_1^-)$ and $\CH_B(m_2^+,m_2^-)$ and let $\pi$ be an irreducible $\CH_A$-module with separated support with respect to both algebras. Let $I(\pi)_1=\CH_B(m_1^+,m_1^-)\otimes_{\CH_A}\pi$ and $I(\pi)_1=\CH_B(m_2^+,m_2^-)\otimes_{\CH_A}\pi$ be the induced modules. Then $I(\pi)_1$ is reducible if and only if $I(\pi)_2$ is.
\end{theorem}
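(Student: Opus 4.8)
The plan is to split the argument according to whether $\pm 1$ occurs in the common central character $(q^{\lambda_1},\dots,q^{\lambda_n})$ of $\pi$, $I(\pi)_1$ and $I(\pi)_2$. \emph{The regular case $q^{\lambda_i}\notin\{\pm 1\}$ for all $i$} requires nothing new: for each $j\in\{1,2\}$ the module $\pi$ has separated support with respect to $\CH_B(m_j^+,m_j^-)$ and avoids $\pm 1$ in its central character, so Theorem~\ref{t:reducible3} applies directly and shows that $I(\pi)_j$ is reducible if and only if $\CH_B(0,0)\otimes_{\CH_A}\pi$ is reducible. Since the right-hand condition is independent of $j$, $I(\pi)_1$ is reducible exactly when $I(\pi)_2$ is. Equivalently, by Proposition~\ref{p:isom}, after inverting the elements $\theta_i^2-1$ (invertible on the central character of $\pi$ by hypothesis) both $\CH_B(m_j^+,m_j^-)$ become the same localization of $\CH_B(0,0)$, hence induce isomorphic modules from $\pi$.

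\emph{The remaining case is $q^{\lambda_i}\in\{\pm 1\}$ for some $i$.} Separated support then forces $m_j^+\neq 0$ (if $1$ occurs) or $m_j^-\neq 0$ (if $-1$ occurs) for each $j$; in particular neither $(m_j^+,m_j^-)$ is $(0,0)$, and the localization trick above is unavailable, since an irreducible $\CH_A$-module cannot be split so as to separate the $\pm 1$-coordinates and is not a module over $\CA[(\theta^2-1)^{-1}]$. For this case I would pass to Kato's exotic-geometry realization \cite{Ka} of the affine Hecke algebra of type $B_n$. Kato constructs the \emph{generic} algebra, with the two parameters $q^{m_+},q^{m_-}$ realized as coordinates on an auxiliary torus $S$ acting on the exotic nilpotent cone, as an equivariant $K$-group; each $\CH_B(m_j^+,m_j^-)$ is its specialization at the point $(q^{m_j^+},q^{m_j^-})\in S$. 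In this model $I(\pi)=\CH_B\otimes_{\CH_A}\pi$ is identified, after base change, with a geometric standard module attached to the semisimple parameter carrying the central character of $\pi$ together with the $GL_n$-nilpotent datum of $\pi$, and the composition series of $I(\pi)$ — in particular whether it is irreducible — is computed by stalks of intersection cohomology complexes on the associated exotic Springer fibre. The crux is that the \emph{separated support} hypothesis $q^{\lambda_i}\notin\{q^{\pm m^+},-q^{\pm m^-}\}$ is precisely the condition ensuring that this fibre, and the local systems on it that enter the multiplicity formula, lie entirely off the locus where $S$ acts nontrivially: the ``vector'' part of the exotic cone drops out and what survives is the ordinary type-$A$ Springer geometry of $\pi$ alone. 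Hence the IC-multiplicities — and therefore the reducibility of $I(\pi)$ — depend only on $\pi$, not on the point of $S$ at which one specializes; applying this to $(m_1^+,m_1^-)$ and $(m_2^+,m_2^-)$ gives the theorem.

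The hard part is this last claim, and it is where the real work lies (carried out in the next section, with Theorem~\ref{c:4.16} as endpoint): one must identify exactly which configurations of the semisimple parameter force the exotic Springer fibre to ``see'' the torus $S$, verify that this locus is the complement of the separated-support locus, and then identify the remaining geometry — component groups and local systems included — with the classical type-$A$ Springer geometry of $\pi$ in a manner that is visibly independent of $m_+,m_-$. A secondary technical point is the compatibility of algebraic parabolic induction $\CH_B\otimes_{\CH_A}(-)$ with Kato's geometric induction, so that $I(\pi)$ genuinely is the standard module claimed; this uses the dictionary between the Bernstein presentation of Section~\ref{s:red} and Kato's construction, together with Lusztig's reduction Theorem~\ref{t:reduction}, which lets one replace $\CH_B$ by its completion at the central character of $\pi$ and work in the geometrically convenient setting. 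I note finally that a purely algebraic deformation argument — moving $(m_+,m_-)$ over the connected separated locus and invoking flatness of the resulting family of induced modules — does not by itself yield constancy of reducibility, since the number of composition factors can jump; this is why the intersection-cohomology input appears to be essential.
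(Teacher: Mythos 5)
Your overall direction is the same as the paper's (Kato's exotic geometry is indeed the key tool, and you correctly resist the flat-deformation shortcut), but the organization and the Case 2 argument differ in ways worth flagging.

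Your Case 1 (no $\pm 1$ in the central character) is a legitimate shortcut via Theorem~\ref{t:reducible3}; the paper does not split this way but the shortcut is sound. For Case 2, the paper's actual argument is considerably more economical than what you sketch, and it avoids precisely the IC computation that you identify as ``the hard part.'' The decisive observation is Corollary~\ref{c:exotic-separated}: for a \emph{separated, positive real} $a=(s,-1,q^m,q)$, the fixed-point variety $\mathbb V^a$ collapses to the single component $V_2(q)$, which is \emph{literally the same variety} for every separated value of $m$. Plugging this into Kato's categorical equivalence $\CH_a\text{-mod}\cong \mathsf{PSh}_{G(a)}(\mathbb V^a)$ (Theorem~\ref{t:exotic-DLL}) gives at once an equivalence $\C F=\C F_{a_1}\circ\C F_{a_2}^{-1}$ between $\CH_{a_2}$-mod and $\CH_{a_1}$-mod (Corollary~\ref{c:equiv}), and the only remaining step is that $\C F$ commutes with parabolic induction from $\CH_A$ (Corollary~\ref{c:conj-true}, citing \cite[Theorem 1.22]{CK}). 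No multiplicity formula, and no analysis of \emph{which} composition factors appear, is needed — it is a pure category-equivalence argument, preserving the entire lattice of submodules by Lemma~\ref{l:4.10}. Your route via ``stalks of intersection cohomology on the exotic Springer fibre'' would in principle work but is a genuinely harder program; the paper explicitly comments (Remark following Corollary~\ref{c:conj-true}) that computing those multiplicities is difficult, and deliberately circumvents it.

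Two substantive imprecisions in your sketch: (i) You describe what survives in the separated case as ``the ordinary type-$A$ Springer geometry of $\pi$ alone.'' That is not accurate as stated: what survives is the $q$-eigenspace $V_2(q)$ inside the exotic datum $V_2={\bigwedge}^2 V_1$, acted on by $G(s)$. After the real-positive reduction, some $\lambda_i$ may still vanish, in which case $G(s)$ has $Sp$-factors, and the geometry is emphatically not a classical $GL$ Steinberg variety. What matters is not that it is type~$A$ but that it does not depend on $m$; this is weaker, cleaner, and exactly what Corollary~\ref{c:exotic-separated} records. (ii) You under-emphasize the role of Lusztig's reduction theorem (Theorem~\ref{t:reduction}): the exotic-geometry machinery as set up in the paper is for real positive $a$ only, so the passage from arbitrary central character to the real-positive case (and the treatment of the type-$D$/isogeny/extended cases, Lemmas~\ref{l:typeD} and~\ref{l:isogeny}) is an unavoidable and nontrivial layer of the proof, not an afterthought. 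As written, your proposal is a sketch with acknowledged deferrals rather than a complete argument; but the deferred ``hard part'' is in fact soluble by a much easier step than you anticipate.
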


We prove Theorem \ref{t:lapid} in general in the next section (see Corollaries \ref{c:conj-true} and \ref{c:true2}) using Kato's exotic geometry \cite{Ka}.

\begin{example}
It is possible for $I(\pi)$ to be reducible even if $\pi$ has separated support. For example, take $\pi$ to equal an irreducible minimal principal series of $\CH_A$ with real central character $(q^{\lambda_1},q^{\lambda_2},\dots,q^{\lambda_n})$, $\lambda_i\in\bR$ for all $i$, such that:
\begin{equation}
\lambda_i-\lambda_j\neq 1,\text{ for all }i\neq j,\ \lambda_i\neq \pm m_+,\text{ for all } i.
\end{equation}
The first condition means that $\pi$ is irreducible, while the second that it has separated support. By induction in stages $I(\pi)$ is the minimal principal series of $\CH_B$ which, given the conditions already imposed, is reducible if and only if
\begin{equation}
\lambda_i+\lambda_j= \pm 1,\text{ for some }i\neq j.
\end{equation}
Notice that this reducibility condition is independent of $m_+,m_-$, as predicted by Theorem \ref{t:reducible3}.
\end{example}

\section{Kato's exotic geometry} For the comparison between the representation theories of affine Hecke algebras for the same classical root datum, but with different parameters, Kato's geometric realization \cite{Ka}, via his ``exotic geometry", is particularly well-suited. Theorem \ref{t:lapid} will follow from this geometric realization, but we need to summarize the main relevant results of Kato's work and the particular applications of it from \cite{CK}. The geometric construction in Kato's work follows the main techniques of Kazhdan-Lusztig and Ginzburg, but the main geometric object is the exotic Steinberg variety (\ref{e:Steinberg}) rather than the classical Steinberg variety. We mention that unlike the classical Kazhdan-Lusztig picture, the exotic geometry seems to not fit so well with the Langlands classification in terms of parabolical induction from tempered modules (a classification of the tempered modules in the exotic geometry is achieved in \cite{CK}, but the description is quite involved). However, as we will see, the geometric geometry is very useful when one needs to compare the representation theory of two affine Hecke algebras of the same type, but with different parameters.

To simplify notation, write $\CH_{B_n}$  for the generic affine Hecke algebra defined in terms of the root datum of $SO(2n+1,\bC)$ and with three parameters $(\bbq_0,\bbq_1,\bbq_2)$. We may drop the subscript when the context is clear. The notation of \cite{Ka} differs a little bit from our previous notation, and we will explain in subsection \ref{s:special} the relation between his parameters and the ones we used before. It suffices to say here that the affine Hecke algebra $\CH_B(m_+,m_-)$ that we considered before is obtained in the exotic geometry by specializing $(\bbq_0,\bbq_1,\bbq_2)=(-q^{m_-},q^{m_+},q).$ Notice the minus sign in front of the first entry.

\

We follow \cite{CK}. We fix some notation first. Let $G=Sp(2n,\bC)$ acting on its vector representation $V_1=\bC^{2n}$. Let $V_2={\bigwedge}^2 V_1$ and set
\[\mathbb V=V_1\oplus V_1\oplus V_2.
\]
Let $P_0$ be a Borel subgroup and $T\subset P_0$ be a maximal torus. We will write the roots of $G$ with respect to $T$ in the standard coordinates $R'=\{\pm\epsilon_i\pm\epsilon_j,\ i\neq j,\ 2\epsilon_i\}$, and we make the convention that the simple roots with respect to $P_0$ are $\Delta'=\{\epsilon_i-\epsilon_{i+1},\ 1\le i\le n-1,\ 2\epsilon_n\}$.

 Let $\C G=G\otimes (\bC^\times)^3$ which acts on $\mathbb V$ as follows: $G$ acts diagonally, and $(c_0,c_1,c_2)\in (\bC^\times)^3$ acts by multiplication by $(c_0^{-1},c_1^{-1},c_2^{-1})$. 

Let $\mathbb V^+$ denote the sum of the positive weight spaces in $\mathbb V$ and set $F=G\times_{P_0} \mathbb V^+$ with the action map
\begin{equation}
\mu: F\to \mathbb V,\quad (g,v^+)\mapsto g\cdot v^+.
\end{equation}
Let $\mathfrak N$ denote the image of $\mu$.  If $a=(s,q_0,q_1,q_2)$ is a semisimple element in $\C G$, denote by $F^a$, $\mathfrak N^a$ etc. its fixed points, and let $G(a)=G(s)$ denote the centralizer of $s$ in $G$ (this is a connected group, as $G$ is simply connected). Let
\begin{equation}
pr_{P_0}:F\to G/P_0,\quad (g,v^+)\mapsto gP_0,
\end{equation}
denote the projection onto the flag variety. The exotic Springer fiber is
\[\C E_X^a=pr_{P_0}(\mu^{-1}(X)^a)\subset G/P_0,\quad X\in \mathfrak N.
\]
Denote \[\CH_a=\CH\otimes_{Z(\CH)}\bC_a\] the specialization of the generic Hecke algebra $\CH$ at the central character defined by $a$.

Let
\begin{equation}\label{e:Steinberg}
Z=F\times_{\fk N} F\cong\{(g_1 P_0,g_2 P_0,X): g_1 P_0,g_2 P_0\in G/P_0,\ X\in g_1 \mathbb V^+\cap g_2\mathbb V^+\}
\end{equation}
be the exotic Steinberg variety \cite[section 1.1]{Ka}. The $\CG$-equivariant $K$-theory of $Z$, $K^\CG(Z)$ has a natural structure of an associative ring (\cite[section 2.7]{CG}). Using the elements of the theory developed in \cite{CG}, the first main result of \cite{Ka} is that
\begin{equation}
\CH\cong \bC\otimes_\bZ K^\CG(Z)\text{ and }\CH_a\cong \bC\otimes_\bZ K^\CG(Z^a)\text{ as }\bC\text{-associative algebras},
\end{equation}
see \cite[Theorems 2.8 and 2.11]{Ka}.

\subsection{Real positive central character}\label{s:real positive}
In this subsection, we discuss the case of real positive central character. The general case will be reduced to this one in subsection \ref{s:general}.
Every $s\in T$ admits a unique polar decomposition $s=s_e s_h$, where $s_e$ is elliptic (compact) and $s_h$ is hyperbolic.
\begin{definition}
We say that a semisimple element $a\in \C G$ is real positive if $a=(s,-1,q^m,q)$, where $q>1$, $m\in\bR$, and $s\in G$ is hyperbolic. A simple $\CH$-module $L$ is said to have real positive central character if the central character of $L$ is $W\cdot a$, where $a$ is real positive.
\end{definition}
Suppose from now on that $a\in\C G$ is a positive real semisimple element. Let $\widehat W_n$ denote the set of (isomorphism classes) of irreducible $W_n$-representations.  By the Tits deformation theorem, we know that the simple modules of the finite Hecke algebra $\CH_{W_n}$ are in one-to-one correspondence with $\widehat W_n$. By abusing notation, we may denote by $\sigma$ both the irreducible $W_n$-representation and the corresponding $\CH_{W_n}$-module. For $X\in\mathfrak N^a$, the exotic standard $\CH_a$-module is realized in the total Borel-Moore homology:
\begin{equation}
M_{(a,X)}=H_\bullet(\C E^a_X,\bC).
\end{equation}
Let $a_0=(1,-1,1,1)\in \C G$. The exotic nilpotent cone is $\mathfrak N^{a_0}$ on which $G$ acts with finitely many orbits. The parameterization of the orbits is described in \cite[section 1.3]{Ka}, and we will recall it in the next subsection. An important feature of this geometry is that for each $X\in \mathfrak N^{a_0}$,
\begin{equation}
\textup{Stab}_{G}(X)\text{ is connected (\cite[Proposition 4.5]{Ka})},
\end{equation}
where $\textup{Stab}_{G}(X)=\{g\in G\mid g\cdot X=X\}.$
This will have the effect that in this theory, unlike the classical settings of the Kazhdan-Lusztig theory \cite{KL} or Springer theory \cite{Sp}, there are only trivial local systems present in the parameterizations. The following exotic Springer correspondence should be compared with the classical Springer correspondence \cite{Sp}.

\begin{theorem}[{\cite[Theorem 8.3]{Ka}}] Let $X\in\mathfrak N^{a_0}$ be given. The top Borel-Moore homology group $H_{2 d_X}(\C E^{a_0}_X,\bC)$ has a structure of an irreducible $W_n$-representation, denoted by $\sigma_X.$
 This induces a one-to-one correspondence
\begin{equation}
G\backslash \mathfrak N^{a_0}\longleftrightarrow \widehat W_n,\quad X\mapsto \sigma_X.
\end{equation}
\end{theorem}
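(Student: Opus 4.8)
The plan is to run the argument for the classical Springer correspondence \cite{Sp} in Kato's exotic setting, exploiting the fact, recorded above, that every $\textup{Stab}_G(X)$ with $X\in\mathfrak N^{a_0}$ is connected. I would first recall from \cite[section 1.3]{Ka} the explicit description of the $G$-orbits on the exotic nilpotent cone $\mathfrak N^{a_0}$: they are parameterized by ordered pairs of partitions $(\mu,\nu)$ with $|\mu|+|\nu|=n$, the same combinatorial data that parameterizes $\widehat W_n$. In particular $\#(G\backslash\mathfrak N^{a_0})=\#\widehat W_n$, so it suffices to produce an assignment $O\mapsto V_O\in\widehat W_n$ from $G$-orbits to irreducible $W_n$-modules which is injective with irreducible values, and then identify $V_{O}$ with the top Borel--Moore homology $H_{2d_X}(\C E^{a_0}_X,\bC)$ for $X\in O$.

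\textbf{The exotic Springer sheaf.} For $X\in\mathfrak N^{a_0}$ the projection $pr_{P_0}$ restricts to an isomorphism $\mu^{-1}(X)\cap F^{a_0}\xrightarrow{\ \sim\ }\C E^{a_0}_X$. The geometric fact I would establish is that $\mu^{a_0}\colon F^{a_0}\to\mathfrak N^{a_0}$ is semismall (the exotic analogue of the semismallness of the Springer resolution); via the orbit classification this reduces to the dimension bounds $2\dim\C E^{a_0}_X\le\dim F^{a_0}-\dim\overline{O_X}$. Granting semismallness, $\mathsf S:=\mu^{a_0}_*\bC_{F^{a_0}}[\dim F^{a_0}]$ is a semisimple perverse sheaf and the decomposition theorem gives
\[
\mathsf S\ \cong\ \bigoplus_{O} V_O\otimes\mathrm{IC}(\overline O,\bC),
\]
the sum over those $G$-orbits $O$ on $\mathfrak N^{a_0}$ with nonzero multiplicity space $V_O$, with no nontrivial local systems since the $\textup{Stab}_G(X)$ are connected (\cite[Proposition 4.5]{Ka}). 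Since distinct orbits have distinct closures, the summands $\mathrm{IC}(\overline O,\bC)$ are pairwise non-isomorphic. Proper base change at a point $X$ of the open orbit $O$ of $\overline O$, together with $\mu^{-1}(X)\cap F^{a_0}\cong\C E^{a_0}_X$, yields a canonical identification $V_O\cong H_{2d_X}(\C E^{a_0}_X,\bC)$ (the equality $2d_X=\dim F^{a_0}-\dim O$ holding because $V_O\neq 0$).

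\textbf{The $W_n$-action and the bijection.} The space $H_\bullet(\C E^{a_0}_X,\bC)=M_{(a_0,X)}$ carries a natural action of $\bC[W_n]$, realized in the convolution framework of \cite{Ka,CG} through the subalgebra $\bC[W_n]\subset\CH_{a_0}$ of the specialized algebra $\CH_{a_0}\cong\bC\otimes_\bZ K^{\CG}(Z^{a_0})$, or equivalently through $\End(\mathsf S)$ acting on stalks; this makes each $V_O$ a $W_n$-module. The heart of the matter is that each $V_O$ is irreducible as a $W_n$-module, equivalently $\End(\mathsf S)\cong\bC[W_n]$ --- the exotic analogue of Borho--MacPherson's theorem that the endomorphism algebra of the Springer sheaf is the group algebra of the Weyl group. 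I would deduce this from a Grothendieck--Springer-type construction of the $W_n$-action (or from Kato's $K$-theoretic presentation $\CH\cong\bC\otimes_\bZ K^{\CG}(Z)$ specialized suitably), which exhibits $\bC[W_n]\hookrightarrow\End(\mathsf S)$, combined with the dimension bound $\dim_\bC\End(\mathsf S)=\sum_O(\dim V_O)^2\le|W_n|$ coming from the structure of the exotic Steinberg variety $Z^{a_0}$; together these force $\End(\mathsf S)\cong\bC[W_n]$. Once this is known, the simple summands of $\mathsf S$ --- hence the $V_O$ --- are in bijection with the simple $\End(\mathsf S)$-modules, i.e.\ with $\widehat W_n$; so $\#\{O:V_O\neq 0\}=\#\widehat W_n=\#(G\backslash\mathfrak N^{a_0})$, which forces $V_O\neq 0$ for \emph{every} orbit and $O\mapsto V_O$ to be a bijection onto $\widehat W_n$. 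Setting $\sigma_X:=V_{O_X}\cong H_{2d_X}(\C E^{a_0}_X,\bC)$ completes the proof.

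\textbf{Main obstacle.} The two substantive steps are the semismallness of $\mu^{a_0}$ and, above all, the identification $\End(\mathsf S)\cong\bC[W_n]$ (equivalently, the irreducibility of the top homology $H_{2d_X}(\C E^{a_0}_X,\bC)$). The latter is where the specific structure of Kato's exotic geometry --- as opposed to the classical nilpotent cone of $\mathfrak{sp}_{2n}$ --- is genuinely used: one needs either a family carrying the $W_n$-monodromy whose special fibre recovers $\mu^{a_0}$, or a dimension/parity count tailored to the exotic Springer fibres and the variety $Z^{a_0}$. Everything else (the orbit count, the base-change computation of $V_O$, and the passage from $\End(\mathsf S)\cong\bC[W_n]$ to the bijection with $\widehat W_n$) is then formal.
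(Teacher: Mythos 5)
This theorem is not proved in the present paper: it is quoted from Kato's \cite[Theorem~8.3]{Ka} and recorded as an input, so there is no ``paper's own proof'' to compare your sketch against. What the paper does supply, immediately surrounding the statement, are exactly the ingredients your sketch relies on: the identifications $\CH\cong\bC\otimes_\bZ K^\CG(Z)$ and $\CH_a\cong\bC\otimes_\bZ K^\CG(Z^a)$ (\cite[Theorems~2.8 and 2.11]{Ka}), and the connectedness of $\textup{Stab}_G(X)$ for $X\in\mathfrak N^{a_0}$ (\cite[Proposition~4.5]{Ka}).

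Your outline is a sound reconstruction of the Borho--MacPherson/Chriss--Ginzburg strategy, and you correctly isolate the two nonformal steps (semismallness of $\mu^{a_0}$ and $\End(\mathsf S)\cong\bC[W_n]$) as the substance of Kato's argument. Two comments. First, for the $W_n$-action and the isomorphism $\End(\mathsf S)\cong\bC[W_n]$, Kato does not need a fresh Grothendieck--Springer construction with a deformation/monodromy argument as you suggest: since $a_0=(1,-1,1,1)$, the finite Hecke part of $\CH$ specializes to $\bC[W_n]$, and the action of $\bC\otimes_\bZ K^\CG(Z^{a_0})\cong\CH_{a_0}$ on $H_\bullet(\C E^{a_0}_X,\bC)$ by convolution already carries a $\bC[W_n]$-action; this is the route the present paper implicitly points at by recalling the $K$-theory isomorphism. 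Second, your upper bound $\dim\End(\mathsf S)\le|W_n|$ is best justified not as a loose estimate but via the Chriss--Ginzburg identification $\End(\mathsf S)\cong H_{\mathrm{top}}(Z^{a_0},\bC)$ (semismallness again), followed by a count of top-dimensional irreducible components of the exotic Steinberg variety $Z^{a_0}$. With those adjustments your sketch matches the expected argument, and the combinatorial matching $\#(G\backslash\mathfrak N^{a_0})=\#\widehat{W_n}$ through bipartitions of $n$ (recalled from \cite[section~1.3]{Ka} and \cite{CK}) closes the counting exactly as you describe.
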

Since the $G$-action on $\mathfrak N^{a_0}$ has finitely many orbits, there exists a unique open dense orbit $\fk O_0^{a_0}$ in $\mathfrak N^{a_0}$. The exotic Springer correspondence is such that if $X\in \fk O_0^{a_0}$, then $\sigma_X$ is the sign representation, while if $X=0$, then $\sigma_0$ is the trivial representation.

 Let $X\in \mathfrak N^{a}$ be given. Under the assumption that $a$ is positive real, the restriction of $M_{(a,X)}$ to $\CH_{W_n}$ is given by \cite[Theorem 9.2]{Ka}, see \cite[Corollary 1.19]{CK}:
\begin{equation}
 M_{(a,X)}|_{W_n}=H_\bullet(\C E^{a_0}_X,\bC).
 \end{equation}
  By the exotic Springer correspondence, this means that $\sigma_X= H_{2 d_X}(\C E^{a_0}_X,\bC)$ appears with multiplicity one in $M_{(a,X)}$.
 \begin{definition}
For  $X\in\mathfrak N^{a_0}$, denote by $L_{(a,X)}$ the unique irreducible subquotient of $M_{(a,X)}$ containing $\sigma_X$. We will refer to $L_{(a,X)}$ as the exotic simple module. \end{definition}

\begin{remark} Since $H_0(\C E^{a_0}_X,\bC)$ is the sign $W_n$-representation, it is also interesting to remark that in this construction, every exotic standard module $M_{(a,X)}$, $X\in\mathfrak N^{a_0}$, contains the sign $\CH_{W_n}$-representation!
\end{remark}

We remark that when $a$ is a positive real semisimple element, then $\mathfrak N^a\subset\mathfrak N^{a_0}$, which allows us to use the main classification result of \cite{Ka}, in the case of positive real central character, as follows.

\begin{theorem}[{\cite[Theorem 10.1]{Ka}}]\label{t:exotic-DLL}
Let $a$ be a positive real semisimple element. There is a natural equivalence of categories \[\C F_a: \mathsf{PSh}_{G(a)}(\mathbb V^a)\longrightarrow \CH_a\text{-modules},\] where $\mathsf{PSh}_{G(a)}(\mathbb V^a)$ is the category of $G(a)$-equivariant perverse sheaves on $\mathbb V^a$.
In particular, there is a one-to-one correspondence
\[G(a)\backslash \mathfrak N^a\longleftrightarrow \text{simple }\CH_a\text{-modules},\quad X\mapsto L_{(a,X)}.
\]
\end{theorem}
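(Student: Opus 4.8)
This is \cite[Theorem 10.1]{Ka}; I describe how I would argue. The plan is to run the Chriss--Ginzburg convolution machinery \cite{CG} for the algebra $\bC\otimes_\bZ K^{\CG}(Z^a)$, which by Kato's first main theorem is already identified with $\CH_a$, while keeping track of the key simplification particular to the exotic setting: all relevant equivariant local systems are trivial because $\textup{Stab}_G(X)$ is connected. First I would fix a real positive semisimple $a=(s,-1,q^m,q)$ and pass to $a$-fixed points. The map $\mu^a:F^a\to\fk N^a$ remains proper, $\fk N^a\subseteq\fk N^{a_0}$ is a finite union of $G(a)$-orbits (Kato, \S1.3), and for $X\in\fk N^a$ the fibre $\mu^{-1}(X)^a$ projects onto the exotic Springer fibre $\C E^a_X$; convolution makes $M_{(a,X)}=H_\bullet(\C E^a_X,\bC)$ into an $\CH_a$-module, the exotic standard module --- the analogue of a Deligne--Langlands--Lusztig standard module.

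Next I would classify the simples. The decisive input, already recorded above, is the restriction formula $M_{(a,X)}|_{W_n}=H_\bullet(\C E^{a_0}_X,\bC)$ together with the exotic Springer correspondence $X\mapsto\sigma_X$: this shows the Springer representation $\sigma_X=H_{2d_X}(\C E^{a_0}_X,\bC)$ occurs in $M_{(a,X)}$ with multiplicity one, so that $L_{(a,X)}$ is well defined, and that the transition matrix writing $[M_{(a,X)}]$ in the basis $\{[L_{(a,X')}]\}$ is unitriangular with respect to the closure order on $G(a)\backslash\fk N^a$ (the extra constituents $L_{(a,X')}$ have $X'$ in the boundary of $\ovl{\fk O}_X$, and the $\sigma_X$-isotypic piece pins the leading term). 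Unitriangularity then gives simultaneously that the $L_{(a,X)}$ are pairwise non-isomorphic and that they exhaust the simple $\CH_a$-modules, since the $[M_{(a,X)}]$, $X\in\fk N^a$, span the relevant Grothendieck group --- here connectedness of $\textup{Stab}_G(X)$ is what removes any local-system labels, so $\fk N^a$ itself, rather than pairs (orbit, local system), indexes the simples.

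For the categorical assertion I would identify $\CH_a$ with the Yoneda algebra $\End^\bullet_{D_{G(a)}(\bb V^a)}\!\big(\mu^a_*\underline{\bC}_{F^a}\big)$, using the decomposition theorem to write $\mu^a_*\underline{\bC}_{F^a}[\dim F^a]\cong\bigoplus_X IC(\ovl{\fk O}_X,\bC)^{\oplus \dim\sigma_X}$ with no shifts and no non-trivial local systems (by the exotic Springer correspondence and connected stabilizers), and then matching this $\Ext$-algebra with $\bC\otimes_\bZ K^{\CG}(Z^a)$ through the bivariant Riemann--Roch/Chern character of \cite{CG}. The functor $\C F_a$ would be $P\mapsto\Hom^\bullet_{D_{G(a)}(\bb V^a)}\!\big(\mu^a_*\underline{\bC}_{F^a},P\big)$; it is exact, sends $IC(\ovl{\fk O}_X,\bC)$ to $L_{(a,X)}$, is faithful, and is essentially surjective onto finite-dimensional $\CH_a$-modules by the standard highest-weight/quasi-hereditary argument, provided every $G(a)$-equivariant perverse sheaf on $\bb V^a$ lies in the subcategory generated by the $IC(\ovl{\fk O}_X,\bC)$.

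The hard part will be exactly this last provision: controlling the \emph{entire} category $\mathsf{PSh}_{G(a)}(\bb V^a)$ rather than only the subcategory generated by $\mu^a_*\underline{\bC}_{F^a}$ --- i.e.\ ruling out ``stray'' equivariant perverse sheaves outside the image of $\C F_a$ --- and doing this uniformly as the parameter $\bbq_1=q^m$ varies. This is where Kato's deformation argument, degenerating $a$ to $a_0$ while keeping the $K$-theory flat over the parameter, is indispensable, and where the comparison between the exotic Steinberg variety and the ordinary Springer/Steinberg picture must be made precise; establishing the clean form of the decomposition (multiplicities $\dim\sigma_X$, trivial local systems) is itself a nontrivial consequence of Kato's earlier results that I would invoke rather than reprove.
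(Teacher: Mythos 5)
The paper does not give a proof of this statement: it is recorded verbatim as \cite[Theorem 10.1]{Ka} and used as a black box, the only added commentary being the one--sentence remark immediately following it (that, for every $v\in \mathbb V^a$, $\mathsf{Stab}_{G(a)}(v)$ is connected by \cite[Theorem 4.10]{Ka}, so that all $G(a)$-equivariant local systems on orbits are constant). There is therefore nothing in the paper to compare your argument against step-by-step; one can only say whether your reconstruction is consistent with the ingredients the authors use and with Kato's methodology.

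On that count your outline is broadly faithful. You correctly single out the two inputs the paper itself emphasizes: (1) the first main theorem of \cite{Ka} identifying $\CH_a\cong \bC\otimes_\bZ K^\CG(Z^a)$, and (2) connectedness of $\mathsf{Stab}_{G(a)}(v)$ as the reason no nontrivial local systems appear, which is precisely what lets $G(a)\backslash\mathfrak N^a$ alone index the simples. Your unitriangularity argument via the exotic Springer correspondence and the restriction $M_{(a,X)}|_{W_n}=H_\bullet(\C E^{a_0}_X,\bC)$ matches how the paper defines $L_{(a,X)}$. Two caveats, though. First, you describe the categorical equivalence via the Yoneda/$\Ext$-algebra picture $\End^\bullet(\mu^a_*\underline{\bC}_{F^a})$ and then ``match'' it to the $K$-theory picture through bivariant Riemann--Roch; this passage is real but nontrivial, and Kato's proof works directly with $K$-theory and its localization rather than via a graded $\Ext$-algebra, so you should not present these as freely interchangeable without doing the comparison. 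Second, your stated worry about ``stray'' perverse sheaves---essential surjectivity of $\C F_a$ onto all of $\mathsf{PSh}_{G(a)}(\mathbb V^a)$---is the right thing to worry about, but the resolution you gesture at (a deformation of $a$ to $a_0$) is not how this is usually handled; what one actually needs is (i) that $G(a)$ acts on $\mathbb V^a$ with finitely many orbits, hence the simple objects of $\mathsf{PSh}_{G(a)}(\mathbb V^a)$ are exactly the $IC$ sheaves of orbit closures (with trivial local systems by the connectedness statement), and (ii) that the pushforward $\mu^a_*\underline{\bC}_{F^a}$ contains every such $IC$ as a summand, which is what the exotic Springer correspondence delivers. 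Since the paper relies entirely on \cite{Ka} here, you should state the theorem as borrowed rather than attempt a self-contained proof.
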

An essential observation for the second part of Theorem \ref{t:exotic-DLL} is that for every $v\in \mathbb V^a$, the stabilizer $\mathsf{Stab}_{G(a)}(v)$ is a connected group \cite[Theorem 4.10]{Ka}, and therefore every $G(a)$-local system supported on the $G(a)$-orbit of $v$ is a constant sheaf.

\smallskip

Notice that the assumption that $a=(s,-1,q^m,q)$ is real positive means that $\mathbb V^a$ equals the fixed points of the action of $s\times (q^m,q)$ on $V_1\oplus V_2$. This is because the action of $-1$ on $V_1$ does not have nontrivial fixed points for hyperbolic $s$.

\begin{definition}\label{d:exotic-separated}
We will say that $a=(s,-1,q^m,q)$ is a positive real semisimple separated element if $s=(q^{\lambda_1},\dots, q^{\lambda_n})$, $\lambda_i\in \bR\setminus\{\pm m\}$, for all $1\le i\le n$.
\end{definition}

\begin{remark}\label{r:compatible}
The case of positive real semisimple characters $a=(s,-1,q^m,q)$ in the exotic geometry corresponds to the affine Hecke algebra $\CH_B(m_+,m_-)$ with $m_-=0$ and $m_+=m$ from section \ref{s:red}. Thus Definition \ref{d:exotic-separated} gives precisely the separatedness condition from section \ref{s:red} for these values of $m_-$ and $m_+$.
\end{remark}


\begin{corollary}\label{c:exotic-separated}
Let $a$ be a positive real semisimple separated element. Then $ \mathbb V^a$ equals the $q$-eigenspace $V_2(q)$ of $s$ on $V_2$, and therefore $G(a)\backslash \mathbb V^a=G(s)\backslash V_2(q)$ is independent of $m$.
\end{corollary}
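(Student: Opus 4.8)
The plan is to compute the fixed‑point space $\mathbb V^a$ explicitly for $a=(s,-1,q^m,q)$ and to show that the separatedness hypothesis forces both copies of $V_1$ to drop out. First I would record, as in the discussion preceding Definition \ref{d:exotic-separated}, how $a$ acts on $\mathbb V=V_1\oplus V_1\oplus V_2$: by the definition of the $\C G$-action, $a=(s,-1,q^m,q)$ acts on the three summands by $-s$, $q^{-m}s$ and $q^{-1}s$ respectively, where $s$ acts through its natural action on $V_1$ and on $V_2={\bigwedge}^2 V_1$. Hence
\[
\mathbb V^a=V_1(-1)\oplus V_1(q^m)\oplus V_2(q),
\]
where for $c\in\bC^\times$ I write $V_1(c)$ (resp.\ $V_2(c)$) for the $c$-eigenspace of $s$ on $V_1$ (resp.\ on $V_2$).

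Next I would eliminate the first two summands using that $s$ is hyperbolic in $Sp(2n,\bC)$. The eigenvalues of $s$ on $V_1=\bC^{2n}$ are the positive reals $q^{\pm\lambda_1},\dots,q^{\pm\lambda_n}$, so $-1$ is never among them and $V_1(-1)=0$ — this is exactly the remark already in the text explaining why the first copy of $V_1$ plays no role. For the second copy, $V_1(q^m)\neq 0$ would force $q^m=q^{\pm\lambda_i}$ for some $i$, that is, since $q>1$, $m=\pm\lambda_i$ for some $i$; but this is precisely what the separatedness condition $\lambda_i\in\bR\setminus\{\pm m\}$ rules out. Hence $V_1(q^m)=0$ as well, and $\mathbb V^a=V_2(q)$.

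Finally, since $G(a)=G(s)$ by definition, this gives $G(a)\backslash\mathbb V^a=G(s)\backslash V_2(q)$; as neither the group $G(s)$ nor the eigenspace $V_2(q)$ involves $m$ in any way, the orbit set is independent of $m$ (hence, via Theorem \ref{t:exotic-DLL}, so is the resulting parameterization of the simple $\CH_a$-modules, which is the point of the statement). I do not anticipate a genuine difficulty here. The only step needing care is the bookkeeping of which of the three parameters $(\bbq_0,\bbq_1,\bbq_2)$ controls which summand of $\mathbb V$; this is pinned down by the $\C G$-action together with the normalization $(\bbq_0,\bbq_1,\bbq_2)=(-q^{m_-},q^{m_+},q)$ recorded in subsection \ref{s:special}, which is exactly what ensures the surviving summand is the $q$-eigenspace on $V_2$ and not an eigenspace on one of the copies of $V_1$.
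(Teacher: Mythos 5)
Your proof is correct and takes essentially the same route as the paper: the paper's one-line argument also eliminates the $q^{-m}s$-copy of $V_1$ via separatedness (the $-s$-copy having already been dispatched in the remark before Definition \ref{d:exotic-separated}, which you also invoke), leaving only $V_2(q)$. You merely supply the explicit eigenspace decomposition and bookkeeping that the paper leaves implicit.
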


\begin{proof}
Since $q^{\pm m}$ does not appear in $s$, it is immediate that $q^{-m} s$ does not have any nonzero fixed points in $V_1$. The claim follows.
\end{proof}

\begin{corollary}\label{c:equiv}
Let $m_1,m_2$ be two real numbers and let $s$ be a hyperbolic semisimple element. Set $a_i=(s,-1,q^{m_i},q)$, $i=1,2$ and suppose that both $a_1$ and $a_2$ are separated. Then $\C F=\C F_{a_1}\circ \C F_{a_2}^{-1}$ is an equivalence of categories between $\CH_{a_2}$-modules and $\CH_{a_1}$-modules.
\end{corollary}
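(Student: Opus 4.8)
The plan is to recognize that $\C F_{a_1}$ and $\C F_{a_2}$ are equivalences out of one and the \emph{same} source category, so that $\C F$ is automatically a composition of equivalences. I would begin by unwinding the two source categories appearing in Theorem \ref{t:exotic-DLL}. By the very definition of $G(a)$ recalled above, $G(a_i)=G(s)$ for $i=1,2$; in particular this centralizer does not depend on $m_i$. Since $a_1$ and $a_2$ are assumed separated, Corollary \ref{c:exotic-separated} applies to each and gives $\mathbb V^{a_i}=V_2(q)$, the $q$-eigenspace of $s$ on $V_2$, with its ambient $G(s)$-action — again independently of $m_i$. Consequently the categories $\mathsf{PSh}_{G(a_1)}(\mathbb V^{a_1})$ and $\mathsf{PSh}_{G(a_2)}(\mathbb V^{a_2})$ are not merely equivalent but literally the same category, namely $\mathsf{PSh}_{G(s)}(V_2(q))$.

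Next I would invoke Theorem \ref{t:exotic-DLL} for $a=a_1$ and for $a=a_2$: it provides equivalences of categories
\[
\C F_{a_1},\ \C F_{a_2}:\ \mathsf{PSh}_{G(s)}(V_2(q))\longrightarrow \CH_{a_i}\text{-modules}.
\]
Since both have the same source, $\C F_{a_2}^{-1}$ is a well-defined equivalence from $\CH_{a_2}$-modules onto $\mathsf{PSh}_{G(s)}(V_2(q))$, and $\C F=\C F_{a_1}\circ\C F_{a_2}^{-1}$ is a composite of equivalences, hence an equivalence between $\CH_{a_2}$-modules and $\CH_{a_1}$-modules, as claimed.

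I do not expect a genuine obstacle at this stage: the substance of the argument has been front-loaded into Corollary \ref{c:exotic-separated} (which itself rests on the observation that $-1$ acting on $V_1$ has no nonzero fixed points for hyperbolic $s$, so that $\mathbb V^a$ reduces to a fixed subspace of $V_2$) and into Kato's classification Theorem \ref{t:exotic-DLL}. The only point deserving an explicit line is that $\mathbb V^{a_1}=\mathbb V^{a_2}$ as $G(s)$-spaces, not merely up to an abstract isomorphism; this is transparent since both coincide with the subspace $V_2(q)\subset V_2$ carrying the restriction of the $G$-action, and it is precisely this on-the-nose equality of equivariant perverse-sheaf categories that makes the composition $\C F_{a_1}\circ\C F_{a_2}^{-1}$ meaningful and, moreover, geometric — which is what will be needed in the sequel to compare standard modules, simple modules, and parabolically induced modules across the two parameters.
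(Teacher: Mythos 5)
Your argument is correct and is exactly the route the paper takes: the paper's proof is the terse "clear from Corollary \ref{c:exotic-separated} and Theorem \ref{t:exotic-DLL}," and your write-up simply fleshes out why those two references suffice — namely that $G(a_i)=G(s)$ and $\mathbb V^{a_i}=V_2(q)$ for both $i$, so the two instances of Theorem \ref{t:exotic-DLL} share the same source category $\mathsf{PSh}_{G(s)}(V_2(q))$ and the composite $\C F_{a_1}\circ\C F_{a_2}^{-1}$ makes sense. Nothing is missing and no alternative idea is introduced.
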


\begin{proof}
This is clear from Corollary \ref{c:exotic-separated} and Theorem \ref{t:exotic-DLL}.
\end{proof}
Two (partially-ordered) lattices $(\C L_1,\le_1)$ and $(\C L_2,\le_2)$ are said to be isomorphic if there exists a bijection $\kappa:\C L_1\to \C L_2$ such that $\kappa(l)\le_2 \kappa(l')$ if and only if $l\le_1  l'$. We regard the set of submodules of a finite length module as a lattice with the partial order given by inclusion.

\begin{lemma}\label{l:4.10}
Let $\C C$ and $\C D$ be abelian categories of modules (over possibly different algebras) and $\C F:\C C\to \C D$ an equivalence of categories. Then $\C F$ induces an equivalence of categories between the corresponding full subcategories of modules of finite length. In addition, for every finite length module $X$ in $\C C$, $X$ and $\C F(X)$ have the same length and isomorphic lattices of submodules.
\end{lemma}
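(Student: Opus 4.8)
The statement is essentially formal, so the plan is to reduce everything to two standard facts: an equivalence of abelian categories is exact and carries simple objects to simple objects, and it induces isomorphisms of subobject posets; the module-theoretic part is then just the identification of subobjects with honest submodules.

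\emph{Step 1 (finite length is preserved).} First I would note that an equivalence $\C F$ between abelian categories is automatically additive and exact, and likewise a quasi-inverse $\C F^{-1}$; in particular both preserve monomorphisms, epimorphisms and short exact sequences. Since ``$S$ simple'' is the categorical condition that $S\neq 0$ and $S$ has no subobject besides $0$ and $S$, both $\C F$ and $\C F^{-1}$ take simple objects to simple objects. Hence a composition series $0=X_0\subset X_1\subset\dots\subset X_\ell=X$ is carried by $\C F$ to a chain in $\C D$ with simple successive quotients, i.e.\ to a composition series of length $\ell$; applying the same to $\C F^{-1}$ shows that $X$ has finite length iff $\C F(X)$ does, with $\ell(X)=\ell(\C F(X))$. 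Thus $\C F$ restricts to an equivalence between the full subcategories of finite-length modules.

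\emph{Step 2 (lattice isomorphism).} Now fix a finite-length $X\in\C C$ and put $Y=\C F(X)$. To a submodule $X'\subseteq X$, with inclusion $\iota_{X'}\colon X'\hookrightarrow X$, I would associate $\kappa(X')=\im\bigl(\C F(\iota_{X'})\bigr)\subseteq Y$; since $\C F(\iota_{X'})$ is a monomorphism this is a submodule of $Y$ canonically isomorphic to $\C F(X')$. If $X'\subseteq X''$ then $\iota_{X'}$ factors through $\iota_{X''}$, whence $\kappa(X')\subseteq\kappa(X'')$, so $\kappa$ is monotone. Performing the same construction with $\C F^{-1}$ yields a monotone map $\kappa'$ from submodules of $Y$ to submodules of $X$, and, using the unit and counit natural isomorphisms of the equivalence (via the naturality identity $\C F^{-1}\C F(\iota_{X'})\circ\eta_{X'}=\eta_X\circ\iota_{X'}$), one checks $\kappa'\circ\kappa=\Id$ and $\kappa\circ\kappa'=\Id$. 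Hence $\kappa$ is an order isomorphism between the posets of submodules of $X$ and of $Y$; as these posets are lattices under intersection and sum of submodules, an order isomorphism between them is automatically a lattice isomorphism. This is the final assertion, and counting maximal chains in the lattice recovers once more the equality of lengths.

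\emph{Where the work is.} There is no genuine obstacle here. The only points needing a little attention are verifying that the given equivalence is exact and preserves simple objects (immediate for module categories, e.g.\ via a quasi-inverse and the adjunction isomorphisms), and the bookkeeping in Step 2 that replaces subobjects by honest submodules through images and checks, using the natural isomorphisms $\C F^{-1}\C F\cong\Id$ and $\C F\C F^{-1}\cong\Id$, that $\kappa$ and $\kappa'$ are mutually inverse.
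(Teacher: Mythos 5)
Your argument is correct, and for the lattice-isomorphism part it takes a genuinely more direct route than the paper. Both proofs begin the same way: observe that an equivalence of abelian categories is exact (the paper derives this from the existence of left and right adjoints, citing Popescu; you derive it from the quasi-inverse and the adjunction isomorphisms), and hence that it preserves simple objects, monomorphisms, and lengths. Where you diverge is in passing to the lattice of submodules. The paper sketches an iterative socle-type argument: form the sum of simple submodules $M_1\subseteq M$, check via the quasi-inverse that multiplicities transfer, quotient, and repeat until the finite length is exhausted. You instead transport the entire subobject poset in one step, sending $X'\subseteq X$ to $\kappa(X')=\im(\C F(\iota_{X'}))\subseteq \C F(X)$, noting monotonicity from factorizations of inclusions and bijectivity from the quasi-inverse together with the naturality of $\eta$. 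This yields the order-isomorphism (hence lattice isomorphism) directly, and in fact without invoking finite length at all; the length equality then follows either from your Step~1 or by counting maximal chains. Your version is arguably cleaner and more self-contained, while the paper's is terser and leaves more to the reader.
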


\begin{proof}
Let us first remark that an equivalence of abelian categories is always an exact functor. (It has a left and a right adjoint \cite[${\S}$1 Theorem 5.3]{P}, which implies that it is right and left exact \cite[${\S}$3 Corollary 2.3]{P}.) In particular, $\C F$ preserves the length of a module and the injectivity of a morphism. Denote the corresponding modules by $M$ and $M'=\C F(M)$ and denote by $M_1$ the direct sum of simple submodules of $M$ and by $M_1'$ the corresponding submodule of $M'$. By considering the quasi-inverse of $\C F$, one sees that the multiplicities are preserved. Taking for each simple submodule the quotient-module, reconsidering the sum of simple submodules of these quotient-modules and repeating this process, the lemma follows. (The procedure has to finish because of the finite length.)
\end{proof}

\begin{corollary}\label{c:conj-true}
Retain the same notation as in Corollary \ref{c:equiv}. Let $\pi$ be a simple $\CH_A$-module with central character $s$ and form the induced modules $I(\pi)_i=\CH_{a_i}\otimes_{(\CH_A)_{s}} \pi$, $i=1,2$. Then the modules $I(\pi)_1$ and $I(\pi)_2$ have the same length and the functor $\C F$ induces an isomorphisms of their lattices of submodules.
\end{corollary}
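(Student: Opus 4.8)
The plan is to reduce the Corollary to the single isomorphism $\C F(I(\pi)_2)\cong I(\pi)_1$ of $\CH_{a_1}$-modules. Granting this, the statement follows at once from Lemma \ref{l:4.10} applied with $\C C$ the category of $\CH_{a_2}$-modules, $\C D$ the category of $\CH_{a_1}$-modules, $\C F$ the equivalence of Corollary \ref{c:equiv}, and $X=I(\pi)_2$: since $\C F$ is an equivalence of abelian categories, $X=I(\pi)_2$ and $\C F(X)\cong I(\pi)_1$ have the same length, and $\C F$ carries the lattice of submodules of $I(\pi)_2$ isomorphically onto that of $I(\pi)_1$.

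So the whole content is the isomorphism $\C F(I(\pi)_2)\cong I(\pi)_1$, or equivalently, after using Corollary \ref{c:exotic-separated} to identify $\mathbb V^{a_1}=\mathbb V^{a_2}=V_2(q)$ and $G(a_1)=G(a_2)=G(s)$, the equality of the perverse sheaves $\C F_{a_1}^{-1}(I(\pi)_1)$ and $\C F_{a_2}^{-1}(I(\pi)_2)$ in $\mathsf{PSh}_{G(s)}(V_2(q))$. The strategy is to describe the parabolic induction functor $\CH_{a_i}\otimes_{(\CH_A)_s}(-)$ in Kato's exotic realization in a way that is visibly independent of $m_i$. Recall that $\CH_A\subset\CH_{a_i}$ is the Levi subalgebra attached to the Levi $GL_n\subset Sp(2n,\bC)$ cut out by the simple roots $\epsilon_i-\epsilon_{i+1}$, so that $\CH_{a_i}\otimes_{(\CH_A)_s}(-)$ really is parabolic induction. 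Using the $K$-theoretic setup of \cite{Ka} for the exotic Steinberg variety (\ref{e:Steinberg}), together with the geometric parabolic-induction formalism of Ginzburg \cite{CG} and its exotic counterpart developed in \cite{CK}, I would check that under $\C F_{a_i}$ this functor is realized by pull-back and proper push-forward along a correspondence relating $\mathbb V^{a_i}$ to the classical $GL_n$ Kazhdan--Lusztig variety for the data $(s,q)$, attached to the parabolic with Levi $GL_n$. The ingredients of this correspondence are only: the $G(s)$-variety $V_2(q)=\mathbb V^{a_i}$, the group $G(s)$, the parameter $q$, and the classical $GL_n$-geometric realization of $\pi$ (the exotic geometry restricted to a $GL_n$-Levi being the ordinary $GL_n$ picture). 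By Corollary \ref{c:exotic-separated} and the separatedness of $a_1$ and $a_2$, none of these ingredients depends on $m_i$.

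It then follows that $\C F_{a_1}^{-1}(I(\pi)_1)=\C F_{a_2}^{-1}(I(\pi)_2)$ in $\mathsf{PSh}_{G(s)}(V_2(q))$, and hence $\C F(I(\pi)_2)=\C F_{a_1}\bigl(\C F_{a_2}^{-1}(I(\pi)_2)\bigr)=\C F_{a_1}\bigl(\C F_{a_1}^{-1}(I(\pi)_1)\bigr)=I(\pi)_1$, which completes the argument. The \emph{main obstacle} is precisely this $m$-uniform geometric description of parabolic induction from $\CH_A$: one has to verify that restricting Kato's exotic Steinberg correspondence to the $GL_n$-parabolic recovers the classical $GL_n$ Kazhdan--Lusztig geometry --- so that the object attached to $\pi$ makes sense independently of $m$ --- and that the resulting geometric functor coincides on the nose with the algebraic parabolic induction $\CH_{a_i}\otimes_{(\CH_A)_s}(-)$; this compatibility is what must be extracted from \cite{Ka} and \cite{CK}. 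Everything else is formal, via Corollaries \ref{c:exotic-separated} and \ref{c:equiv} and Lemma \ref{l:4.10}. The simplicity of $\pi$ is not essential here --- the same reasoning gives the statement for any finite-length $\CH_A$-module with central character $s$.
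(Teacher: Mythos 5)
Your proposal is essentially the paper's proof: both arguments hinge entirely on the fact that the geometric equivalence $\C F$ is compatible with parabolic induction from the $GL_n$-Levi subalgebra $\CH_A$ (so that the diagram $\C F \circ \Ind_A = \Ind_A$ commutes), after which Lemma \ref{l:4.10} gives the length and lattice statements formally. The only difference is that the paper simply cites \cite[Theorem 1.22]{CK} for this compatibility, whereas you sketch how one would re-derive it via Ginzburg-style convolution correspondences in the exotic geometry; you correctly identify this as the crux and correctly point to \cite{Ka} and \cite{CK} as the place where it is established, so there is no gap, just a re-derivation in place of a citation.
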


\begin{proof}
We have the following diagram
\begin{equation}\label{diagram}
\xymatrix{\CH_{a_2}\text{-mod} \ar[r]^{\C F} &\CH_{a_1}\text{-mod} \\
(\CH_A)_{s}\text{-mod}\ar[u]^{\Ind_A}\ar[r]_{=} &(\CH_A)_{s}\text{-mod}\ar[u]_{\Ind_A}.
}
\end{equation}
Here $\Ind$ denote the parabolic induction functor from $\CH_A$. The correspondence $\C F$ is compatible with the parabolic induction, see for example, \cite[Theorem 1.22]{CK}; in other words, diagram (\ref{diagram}) is commutative.
Since the functor $\C F$ is exact,  the claim follows.
\end{proof}

\begin{remark}One may ask if it is possible to compute explicitly the composition factors and multiplicities in an induced module $I(\pi)$. This appears to be a difficult problem. One possible approach using the exotic geometry would have two steps.

Firstly, one needs to analyze the relation between $I(\pi)$ and the exotic standard modules $M_{(a,X)}$. For this, the induction theorem in this setting is \cite[Theorem 7.4]{Ka}.

Secondly, one would need to know the composition series of exotic standard modules in terms of the exotic simple modules. These are given by intersection cohomology as in the classical setting of Kazhdan-Lusztig and Ginzburg. For every $G(a)$-orbit $\CO=G(a)\cdot X$, let $IC(\CO)$ denote the corresponding intersection cohomology $G(a)$-equivariant simple perverse sheaf on $\mathfrak N^a$. Then \cite[Theorem 11.2]{Ka} says that the multiplicity of $L_{(a,X')}$ as a composition factor of $M_{(a,X)}$ is
\begin{equation}\label{e:KL}
[M_{(a,X)}: L_{(a,X')}]=\dim H^\bullet_\CO(IC(\CO')),\text{ where }\CO'=G(a)\cdot X'.
\end{equation}
In particular, if $[M_{(a,X)}: L_{(a,X')}]\neq 0$, then $\CO\subset\overline{\CO'}.$ As one can see from \cite{Ka} (cf. \cite{CK}), when $a$ is separated, the geometry underlying this calculation is very similar to the classical one of Zelevinsky \cite{Ze}.
\end{remark}

\begin{remark}\label{AS}In general, the problem of computing explicitly the multiplicities (\ref{e:KL}) is very difficult. In the classical setting of the Kazhdan-Lusztig geometry for affine Hecke algebras \cite{KL}, an algorithm does not exist (at least to our knowledge). In the setting of the geometric graded affine Hecke algebras \cite{L1}, a complete and difficult algorithm was obtained by Lusztig in \cite{L2}. In the particular case of the graded affine Hecke algebra $\mathbb H_A$ of type $A$ (which appears for example for the $p$-adic group $GL(n)$), there exist exact functors defined by Arakawa and Suzuki \cite{AS} from the category $\C O$ for $\mathfrak{gl}(n,\bC)$ to the category of finite-dimensional $\bH_A$-modules, and, using results of Lusztig and Zelevinsky, the multiplicity question for $\bH_A$-modules can be resolved in terms of the usual Kazhdan-Lusztig polynomials for $S_n$ (i.e.,  Schubert cells for $GL(n,\bC)$). For graded affine Hecke algebras of classical types (which appear in the representation theory of classical $p$-adic groups), certain functors were defined in \cite{CT} from the category of $(\fg,K)$-modules of classical real reductive groups. It is expected that, similarly to the type $A$ case, the multiplicity question for these Hecke algebras can be resolved in terms of Kazhdan-Lusztig-Vogan polynomials.
\end{remark}

\subsection{Specializations of the parameters}\label{s:special}
The affine Hecke algebra $\CH$ with three parameters $(\bbq_0,\bbq_1,\bbq_2)$ considered in the previous discussion of Kato's exotic geometry gives rise to the following specializations.

The most general Hecke algebra corresponding to the root datum of type $B_n$ in the Bernstein presentation or, equivalently, to the labeled affine diagram of type $C_n$ in the Iwahori-Matsumoto presentation \cite{IM}
\begin{equation}
\xymatrix{q^{\lambda_0}\ar@{=>}[r]&q^{\lambda_1}\ar@{-}[r]&q^{\lambda_1}\ar@{-}[r]&\dotsb&q^{\lambda_1}\ar@{-}[l]&q^{\lambda_n}\ar@{=>}[l]}.
\end{equation}
is obtained by specializing $(\bbq_0,\bbq_1,\bbq_2)=(-q^{(\lambda_n-\lambda_0)/2},q^{(\lambda_n+\lambda_0)/2},q^{\lambda_1}).$ We recall that in the Iwahori-Matsumoto presentation, this algebra $\CH$ is generated by $\{T_0,T_1,\dots,T_n\}$, where $T_0$ corresponds to the affine simple reflection, subject to the braid relations in the affine Weyl group and the quadratic relations:
\begin{equation}
(T_0+1)(T_0-q^{\lambda_0})=0,\quad (T_n+1)(T_n-q^{\lambda_n})=0,\quad (T_i+1)(T_i-q^{\lambda_1})=0, \  1\le i\le n-1.
\end{equation}
For example, when $\lambda_0=1$, $\lambda_1=2$, and $\lambda_n=2$, we get the Iwahori-Hecke algebra of the special unitary group of a quaternion form in $2n$ variables, denoted $^2C_{2n}$ in \cite[page 64]{Ti}.

The classical Hecke algebras are obtained as specializations as follows. The affine Hecke algebra defined in terms of the root datum of $SO(2n+1,\bC)$ with two parameters, equivalently for the labeled affine Dynkin diagram in the Iwahori-Matsumoto presentation
\begin{equation}
\xymatrix{q^m&q\ar@{<=}[l]\ar@{-}[r]&q\ar@{-}[r]&\dotsb&q\ar@{-}[l]&q^m\ar@{=>}[l]}
\end{equation}
is obtained by setting $(\bbq_0,\bbq_1,\bbq_2)=(-1,q^m,q).$ For example, when $m=1$, we get the Iwahori-Hecke algebra of the symplectic group $Sp(2n,F)$.

The affine Hecke algebra for the root datum of $Sp(2n,\bC)$ with two parameters, equivalently for the labeled affine Dynkin diagram in the Iwahori-Matsumoto presentation
\begin{equation}
\xymatrix{q\ar@{-}[rd]\ar@/_/@{<.>}[dd]\\&q\ar@{-}[r]&q\ar@{-}[r]&\dotsb&q\ar@{-}[l]&q^m\ar@{=>}[l]\\q\ar@{-}[ru]}.
\end{equation}
is isomorphic to the one for the diagram
\begin{equation}
\xymatrix{1&q\ar@{<=}[l]\ar@{-}[r]&q\ar@{-}[r]&\dotsb&q\ar@{-}[l]&q^m\ar@{=>}[l]}
\end{equation}
and this is obtained by setting $(\bbq_0,\bbq_1,\bbq_2)=(-q^{m/2},q^{m/2},q).$ When $m=1$, this is the Iwahori-Hecke algebra of the split orthogonal $p$-adic group $SO(2n+1,F)$.

The Iwahori-Hecke algebra $\CH(SO(2n))$ of the special orthogonal group $SO(2n,F)$ (equivalently, defined in terms of the root datum of $SO(2n,\bC)$) corresponds to the Iwahori-Matsumoto diagram
\begin{equation}
\xymatrix{q\ar@{-}[rd]\ar@/_/@{<.>}[dd] &&&&&q\ar@{-}[ld]
\\&q\ar@{-}[r]&q\ar@{-}[r]&\dotsb&q\ar@{-}[l]
\\q\ar@{-}[ru]&&&&&q\ar@{-}[lu]}
\end{equation}
and it is isomorphic to the one for the diagram
\begin{equation}\label{e:O2n}
\xymatrix{&&&&&q\ar@{-}[ld]\\1&q\ar@{<=}[l]\ar@{-}[r]&q\ar@{-}[r]&\dotsb&q\ar@{-}[l]
\\&&&&&q\ar@{-}[lu]}
\end{equation}
Consider the specialization $\CH_B(0)$, $(\bbq_0,\bbq_1,\bbq_2)=(-1,1,q)$, which corresponds to the Iwahori-Matsumoto diagram
\begin{equation}\label{e:H_B0}
\xymatrix{1&q\ar@{<=}[l]\ar@{-}[r]&q\ar@{-}[r]&\dotsb&q\ar@{-}[l]&1\ar@{=>}[l]}.
\end{equation}
The algebra $\CH(SO(2n))$ has an outer automorphism $\delta$ defined on the Iwahori-Matsumoto generators by \[\delta(T_i)=T_i, \text{ if }0\le i\le n-2, \text{ and }\delta(T_{n-1})=T_n,\]
i.e., $\delta$ flips the branched nodes in (\ref{e:O2n}). Define the Hecke algebra of $O(2n)$ to be
\[\CH(O(2n))=\CH(SO(2n))\rtimes\<\delta\>.\] Then
\begin{equation}
\CH(O(2n))\cong \CH_B(0);
\end{equation}
more precisely, if the generators of $\CH_B(0)$ are $\{T_i': 0\le i\le n\}$ satisfying the Iwahori-Matsumoto relations for the diagram (\ref{e:H_B0}), then the algebra isomorphism is given by
\[T_i\mapsto T_i',\ 0\le i\le n-1, \text{ and }\delta\mapsto T_n'.
\]
 Thus the representation theory of $\CH(SO(2n))$ can be deduced from that of $\CH_B(0)$ via Clifford theory.

\smallskip

Notice that $\CH_A\subset \CH(SO(2n))\subset \CH_B(0)$. For every $\CH_A$-module $\pi$, form
\[I_D(\pi)=\CH(SO(2n))\otimes_{\CH_A}\pi\text{ and } I_B(\pi)=\CH_B(0)\otimes_{\CH_A}\pi=\CH_B(0)\otimes_{\CH(SO(2n))} I_D(\pi).
\]
\begin{lemma}\label{l:typeD}
Let $\pi$ be a simple $\CH_A$-module with separated central character with respect to $H_B(0)$, i.e., if the central character of $\pi$ is $S_n\cdot (q^{\lambda_1},\dots, q^{\lambda_n})$, then $q^{\lambda_i}\notin\{\pm 1\}$ for any $i$. Then $I_D(\pi)$ is reducible if and only if $I_B(\pi)$ is reducible.
\end{lemma}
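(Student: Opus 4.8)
The plan is to view $I_B(\pi)$ as a module induced along the index-$2$ subalgebra $\CH(SO(2n))\subset\CH(O(2n))$ and apply Clifford theory for this order-$2$ extension. Write $\CH_D=\CH(SO(2n))$. Since $\CH_B(0)\cong\CH(O(2n))=\CH_D\rtimes\langle\delta\rangle$ with $\delta^2=1$, the algebra $\CH_B(0)=\CH_D\oplus\CH_D\delta$ is free of rank $2$ over $\CH_D$ on each side, so $\Ind:=\CH_B(0)\otimes_{\CH_D}(-)$ is exact, $I_B(\pi)=\Ind(I_D(\pi))$, and for every $\CH_D$-module $M$ there is an isomorphism of $\CH_D$-modules $\operatorname{Res}_{\CH_D}\Ind(M)=(1\otimes M)\oplus(\delta\otimes M)$ with $1\otimes M\cong M$ and $\delta\otimes M\cong{}^{\delta}M$, where ${}^{\delta}M$ denotes $M$ with the $\CH_D$-action precomposed with $\delta$; moreover left multiplication by $\delta$ on $\Ind(M)$ interchanges the two summands. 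In particular $\operatorname{Res}_{\CH_D}\Ind(M)$ contains a copy of $M$, so $\Ind$ takes nonzero modules to nonzero modules.

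Next I would identify the central characters of the two summands of $\operatorname{Res}_{\CH_D}I_B(\pi)$. The automorphism $\delta$ acts on $\CA=\bC[\theta_1^{\pm1},\dots,\theta_n^{\pm1}]$ by $\theta_n\mapsto\theta_n^{-1}$ and fixes $\theta_1,\dots,\theta_{n-1}$, so it normalizes $W(D_n)$ and sends the $W(D_n)$-orbit of $t=(q^{\lambda_1},\dots,q^{\lambda_n})$ to the $W(D_n)$-orbit of $\delta(t)=(q^{\lambda_1},\dots,q^{\lambda_{n-1}},q^{-\lambda_n})$. The one place the hypothesis enters is the elementary fact: if $q^{\lambda_i}\neq\pm1$ for all $i$, then $\operatorname{Stab}_{W(B_n)}(t)\subseteq W(D_n)$. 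Indeed, representing an element of $\operatorname{Stab}_{W(B_n)}(t)$ as a signed permutation, along any cycle the product of the signs must be $+1$ (otherwise the relevant coordinate would satisfy $t_i=t_i^{-1}$, i.e.\ $q^{\lambda_i}=\pm1$), so every cycle involves an even number of sign changes and hence so does the whole element. Therefore $W(B_n)\cdot t=W(D_n)\cdot t\sqcup W(D_n)\cdot\delta(t)$ is a disjoint union, the two $W(D_n)$-orbits are distinct, and the corresponding central characters $\chi_t\neq\chi_{\delta(t)}$ of $Z(\CH_D)=\CA^{W(D_n)}$ differ. Now $\pi$, being simple, has a single $\CH_A$-central character $S_n\cdot t$; since $Z(\CH_D)\subseteq Z(\CH_A)=\CA^{S_n}$ lies in $\CA$ and is central in $\CH_D$, it acts on $I_D(\pi)=\CH_D\otimes_{\CH_A}\pi$ by the scalar character $\chi_t$, and hence on ${}^{\delta}I_D(\pi)$ by $\chi_{\delta(t)}$.

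Finally, since $\chi_t\neq\chi_{\delta(t)}$ there are no nonzero $\CH_D$-module homomorphisms between a module on which $Z(\CH_D)$ acts by $\chi_t$ and one on which it acts by $\chi_{\delta(t)}$; thus in $\operatorname{Res}_{\CH_D}I_B(\pi)=(1\otimes I_D(\pi))\oplus(\delta\otimes I_D(\pi))$ every $\CH_D$-submodule $Y$ splits as $Y=\bigl(Y\cap(1\otimes I_D(\pi))\bigr)\oplus\bigl(Y\cap(\delta\otimes I_D(\pi))\bigr)$. Applying this to $Y=\operatorname{Res}_{\CH_D}P$ for a $\CH_B(0)$-submodule $P\subseteq I_B(\pi)$, and using that left multiplication by $\delta$ preserves $P$ and swaps the two summands, one obtains $\operatorname{Res}_{\CH_D}P=P_1\oplus\delta P_1$ with $P_1:=P\cap(1\otimes I_D(\pi))$, and consequently $P=\CH_B(0)\cdot P_1$. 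Conversely, $Q\mapsto\CH_B(0)\cdot(1\otimes Q)=(1\otimes Q)\oplus(\delta\otimes Q)$ takes an $\CH_D$-submodule $Q\subseteq I_D(\pi)$ to a $\CH_B(0)$-submodule of $I_B(\pi)$. These two assignments are mutually inverse, inclusion-preserving bijections between the lattice of submodules of $I_D(\pi)$ and that of $I_B(\pi)$; in particular one is reducible exactly when the other is. I expect the orbit computation of the second paragraph to be the real (if elementary) content, and the separatedness hypothesis $q^{\lambda_i}\neq\pm1$ is genuinely needed: already for $n=1$ we have $\CH_D=\CH_A$, so $I_D(\pi)=\pi$ is irreducible, whereas $I_B(\pi)=\CH_B(0)\otimes_{\CH_A}\pi$ is the rank-one module $I(\nu)$, which is reducible when $q^\nu=\pm1$ by Lemma~\ref{l:rank-one1}.
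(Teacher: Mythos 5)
Your proof is correct and follows the same strategy as the paper's: both arguments reduce to Clifford theory for the index-two extension $\CH(SO(2n))\subset\CH(O(2n))\cong\CH_B(0)$, and both hinge on the same observation that separatedness forces $\operatorname{Stab}_{W(B_n)}(t)\subseteq W(D_n)$, so that $W(D_n)\cdot t$ and $W(D_n)\cdot\delta(t)$ are distinct and hence $I_D(\pi)$ and ${}^\delta I_D(\pi)$ have different $Z(\CH_D)$-characters. The one substantive difference is in packaging: the paper simply cites the Clifford-theoretic irreducibility criterion from \cite[Appendix]{RR} (note there is a sign typo in the paper's statement of it, $\cong$ for $\not\cong$), whereas you re-derive the relevant facts directly from the central-character splitting of $\operatorname{Res}_{\CH_D}I_B(\pi)$, and in doing so you actually obtain the stronger conclusion that the lattices of submodules of $I_D(\pi)$ and $I_B(\pi)$ are isomorphic --- which is the form implicitly used later when Corollary~\ref{c:true2} speaks of lattice isomorphisms across isogenies. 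Your orbit computation via sign-products along cycles of a signed permutation is also a cleanly stated proof of the stabilizer containment that the paper leaves to the reader.
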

\begin{proof}
It is clear that if $I_D(\pi)$ is reducible, then so is $I_B(\pi)$. For the converse, suppose $I_D(\pi)$ is irreducible. The central character of $I_D(\pi)$ is $W(D_n)\cdot (q^{\lambda_1},\dots, q^{\lambda_n})$, where $W(D_n)$ is the Weyl group of type $D_n$. We recall from Clifford theory (see for example \cite[Appendix]{RR}), that if $L$ is  a simple module of $\CH(SO(2n))$ then $\CH_B(0)\otimes_{\CH(SO(2n))}L$ is irreducible if and only if $^\delta L\cong L$ as $\CH(SO(2n))$-modules. Here $^\delta L$ is the $\delta$-twist of $L$.

At the level of Weyl groups $W_n=W(D_n)\rtimes \<\delta\>$.  The involution $\delta$ acts on the central characters as:
\[\delta(W(D_n)\cdot (q^{\lambda_1},\dots, q^{\lambda_n}))=W(D_n)\cdot (q^{\lambda_1},\dots, q^{-\lambda_n}).
\]
Now, if $q^{\lambda_i}\notin\{\pm 1\}$ for any $i$, then $(q^{\lambda_1},\dots, q^{\lambda_n})$ and $(q^{\lambda_1},\dots, q^{-\lambda_n})$ are not conjugate under $W(D_n)$ (only under $W(B_n)$). This means that $L$ and $^\delta L$ have different central characters, so they are nonisomorphic $\CH(SO(2n))$-modules. The claim is thus proved.
\end{proof}

\subsection{Arbitrary central characters}\label{s:general}
Let $\CR=(X,X^\vee,R, R^\vee,\Delta)$ be a general root datum as in section \ref{s:red}, specialize $\bbq=q>1,$ and let $\CH=\CH(\CR,\lambda,\lambda^*)$ be the generic affine Hecke algebra associated to this root datum and to parameter functions $\lambda,\lambda^*$. Specialize  $\bbq=q>1$. As in section \ref{s:red}, the center of $\CH$ is $\CA^W$, and the central characters are $W$-orbits of elements $s\in T=X^\vee\otimes_\bZ \bC^\times.$ The central character $W\cdot s$ is {\it real positive} if $s$ is hyperbolic. This definition is compatible with the one previously stated in the case of the Hecke algebra arising from Kato's geometry.

As it is well known, the classification of simple modules as well as reducibility questions regarding induced modules can be reduced to the case of real positive central characters, see for example \cite{Lu}, also \cite{BM}, \cite{OS}, or \cite[section 1]{CK}. Firstly, we recall this reduction process in the particular case of $\CH_B$, the affine Hecke algebra with unequal parameters for the root datum $(X,R,X^\vee,R^\vee,\Delta)$ for $SO(2n+1)$.


Fix $s=s_e s_h\in T$. The reflection $r_\alpha:X^\vee\to X^\vee$ with respect to the root $\alpha\in R$ defines an automorphism $r_\alpha:T\to T$. Define
\[R(s)=\{\alpha\in R: r_\alpha(s_e)=s_e\}.
\]
Let $\Delta(s)$ be the basis of $R_0(s)\cap R^+$.
Let $W(s)$ be the reflection subgroup of $W_n$ generated by the reflections with respect to $\Delta(s)$. Since $s$ lies in the simply-connected group $G=Sp(2n,\Bbb C)$, $W(s)$ equals the isotropy group of $s_e$ in $W_n$.
This simplifies the general reduction theorem, since no outer automorphisms of the Dynkin diagram $\Delta(s)$ appear.

Let $q>1$, $m_1,m_2\in\bR$ and $a=(s,-q^{m_1}, q^{m_2},q)$ be a semisimple element of $\CG$. Consider the (possibly non-semisimple) root datum $\CR_s=(X,R(s),X^\vee,R(s)^\vee,\Delta(s))$. As in the Bernstein presentation, define a parameter function $\lambda_s: \Delta(s)\to \bR$ as follows: for every $\epsilon_i\pm\epsilon_j\in \Delta(s)$, set $\lambda_s(\epsilon_i\pm\epsilon_j)=1$, and for every $\epsilon_i\in \Delta(s)$, set
\begin{equation}\label{e:spec}
\lambda_s(\epsilon_i)=(m_2+m_1)/2+\epsilon_i(s_e) (m_2-m_1)/2;
\end{equation}
notice that if $\epsilon_i\in \Delta(s)$, then $\epsilon_i(s_e)\in\{\pm 1\}.$ Set also $\lambda^*_s(\alpha)=\lambda_s(\alpha)$ for all $\alpha\in \Delta(s)$ such that $\alpha^\vee\in 2X^\vee.$

\

Let $\CH(s)=\CH(\CR_s,\lambda_s,\lambda_s^*)$ be the affine Hecke algebra defined as in Definition \ref{d:bernstein}.The following theorem is a particular case of the reduction theorems of Lusztig \cite[Theorems 8.6 and 9.3]{Lu}, see also \cite[Theorem 2.6 and Corollary 2.10]{OS} or \cite[section 1.2]{CK}.

\begin{theorem}\label{t:reduction}
Let $q>1$, $m_1,m_2\in\bR$ and $a=(s,-q^{m_1}, q^{m_2},q)$ be a semisimple element of $\CG$ with $s=s_es_h$. There exists an equivalence of categories between $\CH_a$-mod and $\CH(s)_{s_h}$-mod, where $\CH(s)$ is the affine Hecke algebra defined in terms of the root datum $\CR_s$ and parameter functions $\lambda_s,\lambda_s^*$ as above.\footnote{The more customary reduction theorem is phrased in terms of the graded affine Hecke algebra \cite{Lu}. Since we will not use the graded Hecke algebra anywhere else in the paper, we chose to phrase the reduction theorem in this way.}
\end{theorem}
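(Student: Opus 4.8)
The plan is to deduce the statement from the reduction theorems of Lusztig (\cite[Theorems~8.6 and~9.3]{Lu}), in the formulation reproved in \cite[Theorem~2.6 and Corollary~2.10]{OS} and used in \cite[section~1.2]{CK}, by carrying out the reduction in two steps and matching the parameter normalizations. First I would recall that, after specializing $\bbq=q$, the center of $\CH_B$ is $\CA^{W_n}$, so that $\CH_a$ is the specialization of the generic algebra at the parameters recorded in $a$ and at the maximal ideal of $\CA^{W_n}$ attached to the orbit $W_n\cdot s$; thus the $\CH_a$-modules are precisely the modules of the resulting algebra on which the center acts through that ideal, and similarly $\CH(s)_{s_h}$-modules are the $\CH(s)$-modules on which $Z(\CH(s))=\CA^{W(s)}$ acts through the ideal attached to $W(s)\cdot s_h$. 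Write $s=s_es_h$ for the polar decomposition, so that in particular the completions only see the elliptic part $s_e$ when deciding which roots are relevant.

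The first step is Lusztig's reduction to the graded affine Hecke algebra: the completion of $\CH_B$ at $W_n\cdot s$ is identified with the completion, at the point $\log s_h\in X^\vee\otimes_\bZ\bR$, of the graded affine Hecke algebra $\bH(s)$ attached to the (possibly non-parabolic) root system $R(s)=\{\alpha\in R:\ r_\alpha(s_e)=s_e\}$ with basis $\Delta(s)$, the graded parameter at $\alpha\in\Delta(s)$ being determined by $\lambda,\lambda^*$ and by the value $\alpha(s_e)\in\{\pm1\}$. Here I would check that this dictionary yields exactly the function $\lambda_s$ of \eqref{e:spec}: the specialization $(\bbq_0,\bbq_1,\bbq_2)=(-q^{m_1},q^{m_2},q)$ corresponds to $\lambda(\epsilon_n)=m_1+m_2$ and $\lambda^*(\epsilon_n)=m_2-m_1$, and the two values $m_1,m_2$ are then selected according to whether $\alpha(s_e)=-1$ or $+1$. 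I would also invoke the simplification, noted already in the paragraph above the theorem, that $s$ lies in the simply-connected group $G=Sp(2n,\bC)$: the isotropy group of $s_e$ in $W_n$ is then exactly the reflection subgroup $W(s)$, with no contribution from automorphisms of $\Delta(s)$, so the reduction involves no nontrivial component group and hence no crossed-product correction.

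The second step is to re-exponentiate. Since $s_h$ is hyperbolic, $\exp$ restricts to a $W(s)$-equivariant analytic isomorphism from a neighbourhood of $\log s_h$ onto a neighbourhood of $s_h$ in the hyperbolic subtorus, which identifies the completion of $\CA$ at $s_h$ with the completion of the polynomial part of $\bH(s)$ at $\log s_h$, $W(s)$-equivariantly; applying Lusztig's theorem in the reverse direction to the pair $\bigl(\bH(s),\log s_h\bigr)$ then identifies the completion of $\bH(s)$ at $\log s_h$ with the completion, at $W(s)\cdot s_h$, of the affine Hecke algebra $\CH(s)=\CH(\CR_s,\lambda_s,\lambda_s^*)$ of Definition~\ref{d:bernstein}. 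Composing the two steps gives an equivalence between the category of $\CH_B$-modules with central character $W_n\cdot s$ and the category of $\CH(s)$-modules with central character $W(s)\cdot s_h$, that is, between $\CH_a$-mod and $\CH(s)_{s_h}$-mod, which is the assertion. The substance is contained in \cite{Lu,OS,CK}; I expect the only real obstacle to be the bookkeeping --- transcribing Lusztig's conventions into the normalization used here so that the graded parameters come out as in \eqref{e:spec}, and checking that the simple-connectedness of $G$ removes the component-group twist that would otherwise make the reduction a Morita equivalence with an extra crossed product rather than a plain equivalence of module categories.
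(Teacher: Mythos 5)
Your proposal takes essentially the same route as the paper: the paper does not give a direct proof of Theorem~\ref{t:reduction} either, but simply records it as a special case of Lusztig's reduction theorems \cite[Theorems~8.6, 9.3]{Lu} (also \cite{OS}, \cite{CK}), and in Remark~\ref{r:real-transfer} recalls that these theorems are proved by exhibiting an explicit algebra isomorphism $\overline\CH_a\cong\textup{Mat}_k(\overline\CH(s)_{s_h})$ between completions. What you add, correctly, is an unwinding of how Lusztig's two steps --- completion-to-graded and graded-back-to-affine --- produce precisely the parameter function $\lambda_s$ of~\eqref{e:spec} (the split of $m_1,m_2$ according to $\epsilon_i(s_e)=\mp1$ is right, given the paper's dictionary $m_-=m_1$, $m_+=m_2$), and a correct justification of why no outer automorphism of $\Delta(s)$ intervenes: since $G=Sp(2n,\bC)$ is simply connected, $Z_{W_n}(s_e)$ coincides with the reflection subgroup $W(s)$, so the Morita equivalence carries no extra crossed-product factor. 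This is exactly the simplification the paper flags in the paragraph preceding the theorem.

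One small caveat, which does not affect correctness: the paper's Remark~\ref{r:real-transfer} writes the matrix size as $k=|G/G(s_e)|$; this is a misprint (the quotient is infinite), and the correct index in Lusztig's formulation is the order of the $W_n$-orbit of $s$, i.e.\ $[W_n:W(s)]$, which is what your account implicitly uses.
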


\begin{remark}\label{r:induction-compatible} The equivalence of categories in Theorem \ref{t:reduction} is compatible with the parabolic induction, see for example \cite[Theorem 6.2]{BM}.
\end{remark}

\begin{example}\label{e:example centralizer}
An important example is the following. Assume $s_e=(-1,\dots,-1,1,\dots,1)\in T$ is such that $G(s_e)=Sp(2n_1)\times Sp(2n_2)$ with $n_1+n_2=n$. Suppose $s=s_1s_2\in T$, $s_i\in Sp(2n_i)$, $i=1,2$, has elliptic part $s_e$. Set $a=(s,-q^{m_-}, q^{m_+},q)$. Then $(\CH_{B_n})_a$-mod is equivalent with $(\CH_{B_{n_1}})_{\bar a_1}\times (\CH_{B_{n_2}})_{\bar a_2}$-mod, where $\bar a_1=(s_{1,h},-1,q^{m_-},q)$ and $\bar a_2=(s_{2,h},1,q^{m_+},q)$.
\end{example}

\begin{remark}\label{r:real-transfer}
Theorem \ref{t:reduction} is proved by exhibiting an explicit Morita equivalence between $\CH_a$ and $\CH(s)_{s_h}$. More precisely, there is an algebra isomorphism $\overline\CH_a\cong \textup{Mat}_k(\overline\CH(s)_{s_h})$, where $\textup{Mat}_k$ denotes the $k\times k$-matrix algebra, $\overline\CH_a$ and $\overline\CH(s)_{s_h}$ are certain completions of the algebras, and $k=|G/G(s_e)|.$ See \cite[Theorems 8.6, 9.3]{Lu}, also \cite[Theorems 2.6, 2.8]{OS}, for the details. Because of this, most representation theoretic questions, in particular, our reducibility question, can be transferred to the setting of real positive central character.

\end{remark}

\begin{remark}\label{r:real-gl}
The reduction to real central character for the affine Hecke algebra $\CH_A$ of $GL(n)$ is much simpler. If $S_n\cdot s$ denotes a central character, $s=s_e s_h$, then there is a equivalence of categories between $(\CH_A)_s$-mod and $(\CH_A(s))_{s_h}$-mod, where $\CH_A(s)$ is the affine Hecke algebra for the centralizer of $s$ in $GL(n,\bC)$, which is again a product of general linear groups.
\end{remark}

Suppose $\psi:G_1\twoheadrightarrow G_2$ is an isogeny of the complex reductive groups $G_1$ and $G_2$. Let $\psi^*:\CR_2\to \CR_1$ be the induced isogeny of root data. Let $\CH_i=\CH(\CR_1,\lambda_i,\lambda_i^*)$, $i=1,2$ be affine Hecke algebras with parameters as in Definition \ref{d:bernstein}. If the parameters are compatible with respect to $\psi^*$, which we assume now,  then we have an injective algebra homomorphism $\psi^*: \CH_2\to \CH_1$, see for example \cite[sections 1.4, 1.5]{Re}. An immediate consequence of the reduction to real central character theorems \cite[Theorems 8.6 and 9.3]{Lu} is the following:

\begin{lemma}\label{l:isogeny} The algebra homomorphism $\psi^*$ induces an equivalence of categories between the modules with real positive central characters of $\CH_1$ and $\CH_2$.
\end{lemma}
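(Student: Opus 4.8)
The plan is to reduce everything to real positive central characters, where Lusztig's reduction theorem identifies the relevant module categories with those of a \emph{graded} affine Hecke algebra --- an object that does not distinguish between the two lattices $X_1$ and $X_2$.

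First I would record how $\psi^*$ acts on centres and central characters. The isogeny of root data $\psi^*\colon\CR_2\to\CR_1$ amounts to a finite-index inclusion $X_2\hookrightarrow X_1$ (dually, a finite-index inclusion $X_1^\vee\hookrightarrow X_2^\vee$) carrying $R_2$, $R_2^\vee$ bijectively onto $R_1$, $R_1^\vee$; in particular $W_1=W_2=:W$, the injection $\psi^*\colon\CH_2\hookrightarrow\CH_1$ is $T_w\mapsto T_w$, $\theta_x\mapsto\theta_x$ (cf. \cite[sections 1.4, 1.5]{Re}), and on centres it is the inclusion $\bC[X_2]^W=Z(\CH_2)\hookrightarrow Z(\CH_1)=\bC[X_1]^W$. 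Dually $\psi$ induces a surjection of tori $T_1\twoheadrightarrow T_2$ with finite kernel, whence $T_1/W\twoheadrightarrow T_2/W$ on central characters, and restriction of scalars along $\psi^*$ sends a $\CH_1$-module with central character $W\cdot s$ to a $\CH_2$-module with central character $W\cdot\psi(s)$. The crucial point is that $X_1^\vee\subset X_2^\vee$ has finite cokernel, so $X_1^\vee\otimes_\bZ\bR\hookrightarrow X_2^\vee\otimes_\bZ\bR$ is a $W$-equivariant isomorphism; via $\exp$ this identifies the subgroups of hyperbolic elements, $T_1^{\mathrm h}\xrightarrow{\ \sim\ }T_2^{\mathrm h}$, inverting $\psi$ on them. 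Hence $\psi^*$ induces a $W$-equivariant bijection between the real positive central characters of $\CH_1$ and of $\CH_2$.

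Next I would fix a matching pair of real positive central characters $W\cdot s_1\leftrightarrow W\cdot s_2=W\cdot\psi(s_1)$, let $\sigma\in X_1^\vee\otimes_\bZ\bR=X_2^\vee\otimes_\bZ\bR$ be their common logarithm, and apply Lusztig's reduction theorems in their graded-Hecke-algebra form \cite[Theorems 8.6 and 9.3]{Lu} (cf. the footnote to Theorem~\ref{t:reduction}) to each of $\CH_1$ and $\CH_2$: the block of $\CH_i$ at $W\cdot s_i$ is equivalent to the block at $W\cdot\sigma$ of the graded affine Hecke algebra $\bH_i$ attached to the data $(X_i^\vee\otimes_\bZ\bC,\,R_i,\,W,\text{ parameters})$. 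But $\bH_i$ involves only these complexified data, never the lattice $X_i$ itself; since $X_1^\vee\otimes_\bZ\bC=X_2^\vee\otimes_\bZ\bC$, the Weyl group, the (co)roots, and --- by hypothesis --- the parameters all agree under $\psi^*$, there is a canonical identity $\bH_1=\bH_2$ carrying $\sigma$ to $\sigma$. Composing yields an equivalence between the block of $\CH_1$ at $W\cdot s_1$ and the block of $\CH_2$ at $W\cdot s_2$; assembling these over all real positive central characters gives an equivalence between the full subcategory of $\CH_1$-modules with real positive central character and the corresponding subcategory for $\CH_2$.

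It remains to identify this equivalence with the one induced by $\psi^*$: its underlying functor should be restriction of scalars along $\psi^*$, with quasi-inverse $U\mapsto e_{s_1}\cdot(\CH_1\otimes_{\CH_2}U)$, where $e_{s_1}$ projects onto the $W\cdot s_1$-part of $\CH_1\otimes_{\CH_2}U$ (which is supported on the finitely many orbits $W\cdot(s_1z)$, $z\in\ker\psi$). This reduces to the assertion that, after completing at the matching central characters used in \cite{Lu}, $\psi^*$ becomes the identity map of (a completion of) the common graded algebra $\bH_1=\bH_2$ --- precisely what the compatibility of the parameters with $\psi^*$ is meant to guarantee. I expect this final verification --- that Lusztig's completions and Morita equivalences are natural in the isogeny $\psi$, so that the one piece of data $\CH_1$ and $\CH_2$ do not literally share, the lattice, has genuinely disappeared once one is at a real positive central character --- to be the main, essentially bookkeeping, obstacle; all the conceptual content is the lattice-free argument above.
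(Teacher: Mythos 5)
Your proposal is correct and follows essentially the same approach as the paper, which offers no proof beyond the sentence that the lemma is ``an immediate consequence of the reduction to real central character theorems [Lu, Theorems 8.6 and 9.3].'' You have made explicit exactly what those theorems buy — that at real positive central character the block category is governed by a graded affine Hecke algebra depending only on $X^\vee\otimes_\bZ\bC$, the (co)roots, $W$ and the parameters, i.e.\ on lattice-free data that coincide across an isogeny — which is the content the paper takes as read.
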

In other words, the classification of modules with real positive central character for an affine Hecke algebra is independent of the isogeny of the root datum.

\begin{proposition}\label{p:true}
Let $\CH$ be an affine Hecke algebra for a root datum of type $GL(n)$, $B_n$, $C_n$, or $D_n$ (of arbitrary isogeny) with unequal parameters, or $\CH=\CH(O(2n))$. Let $\CH_A$ be the $GL$-Hecke subalgebra of $\CH$ and let $\pi$ be a simple $\CH_A$-module with real positive separated central character. Then the reducibility of $I(\pi)$ and the structure of its lattice of submodules are independent of the parameters of $\CH$.
\end{proposition}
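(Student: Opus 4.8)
The plan is to transport the question to the type $B_n$ case and there combine Lusztig's reduction at a real central character with Kato's exotic realization. \emph{Reductions.} By Lemma~\ref{l:isogeny}, replacing $\CH$ by the algebra attached to another isogeny of the same type changes neither the reducibility nor the submodule lattice of $I(\pi)$: the equivalence provided by that lemma is compatible with the common $GL$-subalgebra $\CH_A$ (on which, at a hyperbolic central character, the reduction is trivial, cf. Remark~\ref{r:real-gl}), hence commutes with $\Ind_A$ and carries $I(\pi)$ to $I(\pi)$. If $\CH$ is of type $GL(n)$, or equals $\CH(SO(2n))$ (type $D_n$) or $\CH(O(2n))$, the only label is $q$, so nothing is to be proved; moreover, by the isomorphisms recalled in Section~\ref{s:special} and by Lemma~\ref{l:typeD}, these cases are anyway governed by the type $B_n$ statement for $\CH_B(0,0)$. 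For a root datum of type $B_n$ or $C_n$, Section~\ref{s:special} identifies $\CH$, compatibly with $\CH_A$, with $\CH_B(m_+,m_-)$ for suitable real labels $m_\pm$ (the label of $\CH_A$, inherited from $\pi$, being normalized to $q$). So it suffices to prove: for $\CH=\CH_B(m_+,m_-)$ and $\pi$ a simple $\CH_A$-module of real positive central character $s=(q^{\lambda_1},\dots,q^{\lambda_n})$, $\lambda_i\in\bR$, which is separated---equivalently $\lambda_i\neq\pm m_+$ for all $i$, since $q^{\lambda_i}\neq-q^{\pm m_-}$ holds automatically---the reducibility and the submodule lattice of $I(\pi)$ are independent of $(m_+,m_-)$.

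\emph{Eliminating $m_-$.} By Section~\ref{s:special}, $\CH_B(m_+,m_-)$ is the specialization of Kato's three-parameter algebra at $(\bbq_0,\bbq_1,\bbq_2)=(-q^{m_-},q^{m_+},q)$; thus, putting $a=(s,-q^{m_-},q^{m_+},q)\in\CG$, the module $I(\pi)$ is a $\CH_a$-module. Apply Theorem~\ref{t:reduction}. Since $s$ is hyperbolic, $s_e=1$, so $R(s)=R$, $\Delta(s)=\Delta$, $W(s)=W_n$, and by (\ref{e:spec}), using $\epsilon_i(1)=1$, $\lambda_s(\epsilon_n)=(m_++m_-)/2+(m_+-m_-)/2=m_+$ and $\lambda_s^*(\epsilon_n)=\lambda_s(\epsilon_n)=m_+$; that is, $\CH(s)=\CH_B(m_+,0)$, a datum \emph{independent of $m_-$}. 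Theorem~\ref{t:reduction} thus gives an equivalence $\CH_B(m_+,m_-)_a\text{-mod}\xrightarrow{\sim}\CH_B(m_+,0)_s\text{-mod}$ which, by Remark~\ref{r:induction-compatible}, commutes with $\Ind_A$, and which, by Lemma~\ref{l:4.10}, preserves lengths and the isomorphism type of submodule lattices; as $\pi$ is fixed by the corresponding $GL$-reduction (Remark~\ref{r:real-gl}), it sends $I(\pi)$ over $\CH_B(m_+,m_-)$ to $I(\pi)$ over $\CH_B(m_+,0)$. Hence the reducibility and the submodule lattice of $I(\pi)$ over $\CH_B(m_+,m_-)$ agree with those over $\CH_B(m_+,0)$, and separatedness is preserved.

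\emph{Eliminating $m_+$.} It remains to show that the reducibility and the lattice of $I(\pi)=\CH_B(m,0)\otimes_{\CH_A}\pi$ do not depend on $m$, among the $m$ with $\lambda_i\neq\pm m$ for all $i$. By Section~\ref{s:special}, $\CH_B(m,0)$ is the specialization of Kato's algebra at $a=(s,-1,q^m,q)$, which is real positive and separated in the sense of Definition~\ref{d:exotic-separated}. For two such values $m_1,m_2$, Corollary~\ref{c:exotic-separated} gives $\mathbb V^{a_1}=\mathbb V^{a_2}=V_2(q)$ and $G(a_1)=G(a_2)=G(s)$, so Corollary~\ref{c:equiv} produces an equivalence $\C F=\C F_{a_1}\circ\C F_{a_2}^{-1}$ between $\CH_{a_2}$-modules and $\CH_{a_1}$-modules, and Corollary~\ref{c:conj-true} shows that $\C F$ is compatible with $\Ind_A$ and that $I(\pi)$ over $\CH_B(m_1,0)$ and over $\CH_B(m_2,0)$ have the same length with isomorphic submodule lattices. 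Combined with the previous step, this proves the proposition. (When $s$ has no $\lambda_i=0$, one can instead bypass the exotic geometry, reducing directly to $\CH_B(0,0)$ by Theorem~\ref{t:reducible3}.)

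\emph{Main obstacle.} The crux is the interplay of the two elimination steps. Theorem~\ref{t:reduction} is invoked precisely because, at a hyperbolic central character, the reduced labels satisfy $\lambda_s^*=\lambda_s$, i.e. $m_-=0$: this is the one point where the special form of a real positive central character is used, and it is what makes Kato's comparison applicable, since the exotic variety $\mathbb V^a$ is $m$-independent only when the second coordinate of $a$ equals $-1$ (the $-1$ being needed to annihilate the first copy of $V_1$ for hyperbolic $s$, cf. Corollary~\ref{c:exotic-separated}). A pervasive but routine point is to verify that every equivalence used---the isogeny comparison, Lusztig's reduction, Kato's $\C F$, and the type $C$/$D$/$O$ identifications of Section~\ref{s:special}---restricts to the common $GL$-subalgebra $\CH_A$ and hence commutes with parabolic induction, so that $I(\pi)$ is transported to $I(\pi)$ and not to an a priori different module.
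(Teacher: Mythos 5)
Your proof is correct and follows essentially the same route as the paper: reduce via the specializations of Section~\ref{s:special}, Lemma~\ref{l:typeD} and Lemma~\ref{l:isogeny} to the $SO(2n+1)$ root datum; observe that Lusztig's reduction at a hyperbolic $s$ (your explicit computation with (\ref{e:spec}) is exactly what underlies Example~\ref{e:example centralizer} applied with $s_e=1$) replaces $\CH_B(m_+,m_-)$ by $\CH_B(m_+,0)$; and then invoke Kato's exotic geometry (Corollaries~\ref{c:exotic-separated}, \ref{c:equiv}, \ref{c:conj-true}) to remove the dependence on $m_+$. The only difference is cosmetic: you unwind Theorem~\ref{t:reduction} directly rather than citing the Example, and you flag more explicitly the routine compatibilities of each equivalence with the common subalgebra $\CH_A$.
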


\begin{proof}
When $\CH$ is an affine Hecke algebra for a general linear group, the proposition is a tautology.

Otherwise, via the specializations presented in the previous subsection and Lemma \ref{l:typeD}, one is, for the cases therein, reduced to consider the case of the affine Hecke algebra $\CH $ for the root datum of $SO(2n+1)$ with unequal parameters and of a semisimple element $s$ with $s_e=1$. Then, example \ref{e:example centralizer} applied to $s_e=1$ brings us back to the situation considered in the subsection \ref{s:real positive} and one concludes by Corollary \ref{c:conj-true}.

Finally, because of the independence of isogeny at real positive central character Lemma \ref{l:isogeny}, the claim holds for all root data.
\end{proof}

\begin{corollary}\label{c:true}
Let $\CH_B$ be the affine Hecke algebra for the root datum of $SO(2n+1,\bC)$ and unequal parameters. Let $\pi$ be a simple $\CH_A$-module with separated central character. Then the reducibility of $I(\pi)$ and the structure of its lattice of submodules are independent of the parameters of $\CH_B$.
\end{corollary}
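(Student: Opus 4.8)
The plan is to obtain Corollary~\ref{c:true} from Proposition~\ref{p:true} by using Lusztig's reduction theorem to pass from an arbitrary separated central character to a real positive one, where the statement is already available. Thus, given a simple $\CH_A$-module $\pi$ with separated central character $S_n\cdot s$, I would write the polar decomposition $s=s_es_h$ and, for each choice of parameters $m_+,m_-$, set $a=(s,-q^{m_-},q^{m_+},q)\in\CG$, so that $(\CH_B)_a$ is the relevant specialization of the affine Hecke algebra of type $B_n$ with labels $m_+,m_-$.

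\textbf{Reduction to real positive central character.} By Theorem~\ref{t:reduction} there is an equivalence of categories $(\CH_B)_a\text{-mod}\simeq \CH(s)_{s_h}\text{-mod}$, where $\CH(s)=\CH(\CR_s,\lambda_s,\lambda_s^*)$ is attached to the root subdatum $\CR_s$ determined by $R(s)$ — which depends only on $s_e$, hence is the \emph{same} for all parameter choices — and to the parameter functions of~(\ref{e:spec}), which do vary with $m_+,m_-$. By Remark~\ref{r:induction-compatible} this equivalence is compatible with parabolic induction, and under it the induction from $\CH_A$ corresponds, via the analogous reduction Remark~\ref{r:real-gl} for $\pi$, to induction for $\CH(s)$ from its own type-$A$ subalgebra. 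Moreover, by Remark~\ref{r:real-transfer} the reduction is realized by an explicit Morita equivalence, so it transports not only the reducibility status of $I(\pi)$ but its entire lattice of submodules. Hence it suffices to establish the desired independence for the reduced induced $\CH(s)_{s_h}$-module, at the hyperbolic (i.e.\ real positive) central character $s_h$.

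\textbf{Structure of $\CH(s)$ and application of Proposition~\ref{p:true}.} Since $G=Sp(2n,\bC)$ is simply connected, $W(s)$ is the full isotropy group of $s_e$ and no outer diagram automorphisms intervene; $G(s_e)$ is a product of two symplectic factors (the $\pm1$-eigenspaces) and general linear factors (one for each pair $\{\zeta,\zeta^{-1}\}$ of nontrivial eigenvalues), so $\CH(s)$ is isomorphic to a tensor product of affine Hecke algebras, of which two are of type $C$ (equivalently of type $B$, by Lemma~\ref{l:isogeny}, since the central character is real positive) with short-root labels $m_+$ and $m_-$ respectively by~(\ref{e:spec}), and the rest of type $A$ (cf.\ Example~\ref{e:example centralizer}). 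The key point is that the separatedness of $\pi$ with respect to $\CH_B$, namely $q^{\lambda_i}\notin\{q^{\pm m_+},-q^{\pm m_-}\}$ for all $i$, translates precisely into the assertion that $s_h$ is a real positive separated central character for each of the two classical tensor factors of $\CH(s)$ (it is automatic for the type-$A$ factors). Granting this, Proposition~\ref{p:true} applies to each tensor factor and shows that the reducibility and the lattice of submodules of the reduced induced module do not depend on $m_+,m_-$; transporting this conclusion back through the equivalences of the previous step proves the corollary.

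The main obstacle is the bookkeeping in the last step: one must pin down exactly which affine Hecke algebra $\CH(s)$ is, determine via~(\ref{e:spec}) how $m_+,m_-$ enter its parameter functions and which classical factor receives which label, and verify that separatedness for $\CH_B(m_+,m_-)$ is genuinely equivalent to separatedness for every tensor factor of $\CH(s)$, so that Proposition~\ref{p:true} — which is phrased for separated real positive central character — really does apply to each factor. Once this dictionary between the exotic-geometry parameters, the reduced root datum, and the separatedness conditions is made precise, the remainder is a formal concatenation of Theorem~\ref{t:reduction}, Remarks~\ref{r:induction-compatible} and~\ref{r:real-transfer}, Lemma~\ref{l:isogeny}, and Proposition~\ref{p:true}.
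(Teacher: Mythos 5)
Your proposal is correct and follows essentially the same approach as the paper's proof: pass to real positive central character via Theorem~\ref{t:reduction}, use the compatibility of the reduction with parabolic induction and with the notion of separatedness, and then invoke Proposition~\ref{p:true} for the product root datum $\CR_s$. Your unpacking of how $\CH(s)$ splits into two classical factors (for the $\pm 1$-eigenspaces of $s_e$) plus type-$A$ factors, and of how the labels $m_+,m_-$ are distributed via~(\ref{e:spec}), is just a more explicit rendering of what the paper states briefly.
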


\begin{proof}Suppose that the central character of $\pi$ is represented by the semisimple element $s$. Consider the algebras $\CH(s)$ from Theorem \ref{t:reduction} and $\CH_A(s)$ from Remark \ref{r:real-gl}. By \cite[Theorem 6.2]{BM}, the reduction to real positive central character commutes with parabolic induction. Hence if $\pi'$ is the $(\CH_A(s))_{s_h}$-module corresponding to $\pi$, then $I(\pi)$ is reducible if and only if $I(\pi')=\CH(s)\otimes_{\CH_A(s)}\pi'$ is. Moreover, as it can be seen from the reduction to real positive central character (Theorem \ref{t:reduction} and (\ref{e:spec})), the separatedness condition for the central character as defined in section \ref{s:3.5} corresponds to the definition of separated real positive central character (Definition \ref{d:exotic-separated}), see Remark \ref{r:compatible}.

The algebra $\CH(s)$ corresponds to the root datum $\CR_s$ which is a product of the root data of type $GL$, $B$, $C$, $D$, while $\CH_A(s)$ corresponds to the maximal root subsystem of type $GL$ of $\CR_s$. Then Proposition \ref{p:true} implies the claim of the corollary.
\end{proof}

\begin{corollary}\label{c:true2}
Let $\CH$ and $\CH'$ be two affine Hecke algebras for root data of type $SO(2n+1,\bC)$, $Sp(2n,\bC)$, $SO(2n,\bC)$, or for $O(2n,\bC)$, with arbitrary parameters. Let $\CH_A$ be the $GL$-Hecke algebra viewed as a subalgebra of $\CH$ and $\CH'$. Let $\pi$ be a simple $\CH_A$-module with separated central character with respect to both $\CH$ and $\CH'$, and let $I^\CH(\pi)$ and $I^{\CH'}(\pi)$ be the corresponding induced modules. Then there exists an isomorphism of the lattices of submodules for $I^{\CH}(\pi)$ and $I^{\CH'}(\pi)$, in particular, $I^{\CH}(\pi)$ is irreducible if and only if $I^{\CH'}(\pi)$ is.
\end{corollary}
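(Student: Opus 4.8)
The plan is to reduce Corollary \ref{c:true2} to the already-established Corollary \ref{c:true}. First I would observe that all three of the non-type-$B$ cases, namely $Sp(2n,\bC)$, $SO(2n,\bC)$, and $O(2n,\bC)$, have been tied in subsection \ref{s:special} to a type-$B_n$ Hecke algebra: the affine Hecke algebra for $Sp(2n,\bC)$ with its two parameters is isomorphic to a type-$B_n$ algebra $\CH_B(m_+,m_-)$ for suitable $(m_+,m_-)$ via the isomorphism of labelled diagrams given there; the algebra $\CH(O(2n))$ is by construction $\CH_B(0)$, i.e.\ the specialization $\CH_B(m_+,m_-)$ with $m_+=m_-=0$; and $\CH(SO(2n))$ sits inside $\CH(O(2n))=\CH_B(0)$ with Lemma \ref{l:typeD} telling us that, for $\pi$ with separated central character (which for $\CH_B(0)$ means $q^{\lambda_i}\neq\pm 1$), the reducibility of $I_D(\pi)$ is equivalent to that of $I_B(\pi)$.

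Next I would set up the comparison properly. Given $\CH$ and $\CH'$ in the list, by the previous paragraph each of $I^{\CH}(\pi)$ and $I^{\CH'}(\pi)$ is, up to an isomorphism of algebras respecting the common $GL$-subalgebra $\CH_A$, isomorphic to an induced module $I(\pi)$ for a type-$B_n$ affine Hecke algebra $\CH_B(m^+,m^-)$ (with the caveat for $SO(2n)$ handled by Lemma \ref{l:typeD}). The isomorphisms of labelled diagrams in subsection \ref{s:special} are compatible with parabolic induction because they are the identity on the finite generators $T_1,\dots,T_{n-1}$ that generate $\CH_A$ together with the lattice part. Hence $I^{\CH}(\pi)$ and a corresponding $\CH_B(m_1^+,m_1^-)$-induced module $I(\pi)_1$ have isomorphic lattices of submodules, and similarly $I^{\CH'}(\pi)$ and some $I(\pi)_2$. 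Corollary \ref{c:true} then gives that $I(\pi)_1$ and $I(\pi)_2$ have isomorphic lattices of submodules, since $\pi$ is assumed separated with respect to all the algebras involved and Corollary \ref{c:true} asserts exactly parameter-independence of the lattice of submodules. Composing these lattice isomorphisms yields the desired isomorphism of the lattices of submodules for $I^{\CH}(\pi)$ and $I^{\CH'}(\pi)$; the irreducibility statement is the special case where the lattice has two elements.

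The main obstacle I expect is bookkeeping around the $SO(2n)$ case via Clifford theory: one must check that the equivalence ``$I_D(\pi)$ reducible $\iff$ $I_B(\pi)$ reducible'' from Lemma \ref{l:typeD} can be upgraded to a statement about lattices of submodules rather than just a yes/no reducibility answer, since the corollary asserts an isomorphism of submodule lattices. This requires unwinding the Clifford theory: when $^\delta L\not\cong L$ for every simple constituent $L$ of $I_D(\pi)$ — which is exactly the situation forced by $q^{\lambda_i}\neq\pm1$, as shown in the proof of Lemma \ref{l:typeD} — the induction functor $\CH_B(0)\otimes_{\CH(SO(2n))}(-)$ is fully faithful on the relevant category and identifies the submodule lattice of $I_D(\pi)$ with that of $I_B(\pi)$; I would invoke the Clifford-theoretic statements in \cite[Appendix]{RR} for this. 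The other, comparatively routine, point is verifying that each diagram isomorphism in subsection \ref{s:special} genuinely restricts to the identity (or at least to an isomorphism) on $\CH_A$ and intertwines the respective parabolic induction functors, so that the lattice isomorphisms it induces are the ones we want; this is immediate from the explicit generator assignments recorded there.
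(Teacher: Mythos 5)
Your proposal follows essentially the same route as the paper's proof, which compresses the whole argument into a one-line reference back to Corollary~\ref{c:true}, the specializations of subsection~\ref{s:special}, and the pattern of Proposition~\ref{p:true}. The one place where you go beyond the paper's text is also correct and worth noting: Lemma~\ref{l:typeD} as stated only concerns reducibility, not submodule lattices, and your Clifford-theoretic fix is the right one — under the separated hypothesis the involution $\delta$ acts freely on the relevant central characters, so $\CH_B(0)\otimes_{\CH(SO(2n))}(-)$ restricts to an equivalence between the corresponding blocks of $\CH(SO(2n))$-modules and $\CH_B(0)$-modules, and therefore identifies the lattice of submodules of $I_D(\pi)$ with that of $I_B(\pi)$; this is exactly the detail the paper leaves implicit in citing Lemma~\ref{l:typeD} inside the proof of Proposition~\ref{p:true}.
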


\begin{proof}
This follows now from Corollary \ref{c:true} via the specializations in the previous subsection, similar as in the proof of Proposition \ref{p:true}.
\end{proof}

\subsection{}The results of section \ref{s:red}, also imply the following complementary result.

\begin{proposition}\label{p:reduc}
Let $\CH$ be an affine Hecke algebra for a root datum of type $SO(2n+1,\bC)$, $Sp(2n,\bC)$, or for $O(2n,\bC)$ with arbitrary parameters. If the central character of the $\CH_A$-module $\pi$ is not separated with respect to the parameters of $\CH$, then $I(\pi)$ is reducible.
\end{proposition}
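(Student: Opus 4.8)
The plan is to reduce every case to the affine Hecke algebra $\CH_B(m_+,m_-)$ of type $B_n$ studied in section \ref{s:red}, and then to invoke Theorems \ref{t:reducible} and \ref{t:reducible2} together with Remark \ref{r:red}. First I would use the specializations of subsection \ref{s:special}: the algebra $\CH(O(2n))$ is isomorphic to $\CH_B(0,0)$, the affine Hecke algebra for the root datum of $Sp(2n,\bC)$ with its two parameters is isomorphic to $\CH_B(m,m)$ for a suitable real $m$, and $\CH$ for the root datum of $SO(2n+1,\bC)$ with arbitrary parameters is by definition $\CH_B(m_+,m_-)$. Each of these isomorphisms carries $\CH_A$ to $\CH_A$ and is compatible with parabolic induction, so it suffices to prove the statement for $\CH=\CH_B(m_+,m_-)$, and a non-separated central character for $\CH$ corresponds to one for $\CH_B$.

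Next I would unwind the hypothesis. Since $Z(\CH_A)=\CA^{S_n}$ acts on the finite-dimensional module $\pi$ through the non-separated central character $S_n\cdot(q^{\lambda_1},\dots,q^{\lambda_n})$, the module $\pi$ carries at least one $\CA$-weight, which after reordering the coordinates (harmless, since the condition that some coordinate lie in $\{q^{\pm m_+},-q^{\pm m_-}\}$ is permutation invariant) I may write as $(q^{\lambda_1},\dots,q^{\lambda_n})$ with $q^{\lambda_i}\in\{q^{\pm m_+},-q^{\pm m_-}\}$ for some $i$. If $m_+\neq0$ and $m_-\neq0$ this is exactly the hypothesis of Theorem \ref{t:reducible}, which gives the reducibility of $I(\pi)$; if $m_+=m_-=0$ the bad set is $\{\pm1\}$ and Theorem \ref{t:reducible2} applies.

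It then remains to treat the mixed case, say $m_-=0\neq m_+$ (the case $m_+=0\neq m_-$ being symmetric). One first translates, via the remark after Lemma \ref{l:BM}, to a weight $\nu'=(q^{\lambda'_1},\dots,q^{\lambda'_n})$ of $I(\pi)$ whose last coordinate $q^{\lambda'_n}$ lies in the bad set $\{q^{\pm m_+}\}\cup\{-1\}$, and fixes a weight vector $0\neq x\in I(\pi)$ for it. If $q^{\lambda'_n}\in\{q^{\pm m_+}\}$, hence $q^{\lambda'_n}\neq\pm1$, then the argument of Theorem \ref{t:reducible} runs verbatim: $R_n^2\cdot x=0$ by (\ref{e:R_n-squared}), and since $q^{-2\lambda'_n}\neq1$ one extracts from $R_n\cdot x$ (or, if this vanishes, from $x$) a one-dimensional $\CH_{B_1}$-submodule of $I(\pi)$, whence reducibility by Lemma \ref{l:finite}; this is the content of Remark \ref{r:red}. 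If instead $q^{\lambda'_n}=-1$, I would observe that the correction term $(q^{m_++m_-}-1)\theta_n+(q^{m_+}-q^{m_-})$ appearing in the last cross-relation of $\CH_B$ equals $(q^{m_+}-1)(\theta_n+1)$ when $m_-=0$ and hence annihilates $x$; then, exactly as in the proof of Theorem \ref{t:reducible2}, $T_n$ preserves the plane $\bC x+\bC(T_n\cdot x)$, and, being invertible there and satisfying its quadratic relation, it yields a one-dimensional $\CH_{B_1}$-submodule of $I(\pi)$ (spanned by $x$, or by $T_n\cdot x-x$, which transforms like the sign character of $\CH_{W_1}$), so that Lemma \ref{l:finite} again forces $I(\pi)$ to be reducible.

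The only genuinely new work, beyond quoting section \ref{s:red}, is the bookkeeping of the first paragraph, namely checking that the Iwahori--Matsumoto diagram isomorphisms of subsection \ref{s:special} transport the inclusion $\CH_A\subset\CH$, the induced module $I(\pi)$, and the separatedness condition unchanged, together with the (essentially clerical) verification that Theorem \ref{t:reducible}, Remark \ref{r:red}, and the argument of Theorem \ref{t:reducible2} between them exhaust the mixed parameter case. I expect this first reduction to be the only point that needs care; the rest is a direct appeal to the rank-one reducibility already built into the proofs of section \ref{s:red}.
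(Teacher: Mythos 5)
Your reduction strategy matches the paper's proof exactly: the paper also disposes of the $SO(2n+1,\bC)$ case via Theorems \ref{t:reducible}, \ref{t:reducible2} and Remark \ref{r:red}, and then handles $Sp(2n,\bC)$ and $O(2n,\bC)$ by the specializations of \S\ref{s:special}. You went further, though, and rightly observed that when exactly one of $m_+,m_-$ vanishes (say $m_-=0\neq m_+$) the bad set is $\{q^{\pm m_+}, -1\}$, and the published Remark \ref{r:red} only treats coordinates in $\{q^{\pm m_+}\}$; the remaining subcase with a coordinate equal to $-1$ is not literally covered by any of Theorem \ref{t:reducible}, Theorem \ref{t:reducible2}, or Remark \ref{r:red}, so your extra paragraph genuinely fills a gap which the paper's one-line proof passes over. (For $Sp(2n,\bC)$ the specialization gives $m_+=m_-$, so no mixed case arises there; but the $SO(2n+1,\bC)$ case with arbitrary parameters does include it.)

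There is a small computational slip in your treatment of that extra case. After producing a weight vector $x$ with $\theta_n\cdot x=-x$ and observing (correctly) that the $\CH_B$ cross-relation applied to $x$ gives $\theta_n\cdot(T_n x)=-T_n x$, so that $T_n$ preserves the plane $\bC x+\bC(T_n x)$, you set $y=T_n x-x$ and claim it transforms by the sign. But $T_n$ now satisfies $(T_n+1)(T_n-q^{m_+})=0$ with $m_+\neq 0$, so its eigenvalues on that plane are $-1$ and $q^{m_+}$, and the eigenvectors (when $x$, $T_n x$ are independent) are $T_n x-q^{m_+}x$ and $T_n x+x$, not $T_n x-x$; the latter is an eigenvector only when $m_+=0$. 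Replacing $T_n x-x$ by either of the correct eigenvectors produces a one-dimensional $\CH_{B_1}$-module (Steinberg or trivial respectively), after which your appeal to Lemma \ref{l:finite} is valid. So the idea is right and the argument is easily repaired; just don't import the vector $T_n x\pm x$ wholesale from the proof of Theorem \ref{t:reducible2}, which was written under the hypothesis $m_+=0$.
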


\begin{proof} By Theorems \ref{t:reducible} and \ref{t:reducible2} and Remark \ref{r:red}, the claim holds when $\CH$ is the affine Hecke algebra for the root datum of $SO(2n+1,\bC)$ with any three parameters. By the specializations in  subsection \ref{s:special}, it also holds when $\CH$ has the root datum of $Sp(2n,\bC)$ with two parameters or $O(2n,\bC)$ with one parameter.

\end{proof}

\section{Consequences for representations of $p$-adic groups}
Recall that we fixed a type $G$ of quasi-split classical groups (either a general linear, a symplectic, a special odd orthogonal, a (full) even orthogonal or a unitary group) and also an extension field $E$ of $F$, so that the Levis subgroups of groups of type $G$ have the form $M=M'\times G_{d_0}$, where $G_{d_0}$ denotes a group of type $G$ of relative semi-simple rank $d$ and $M'$ a product of general linear groups with coefficients in $E$.

\begin{definition} For $d_0$ an integer $\geq 0$ and $\sigma $ an irreducible supercuspidal representation of $G_{d_0}$, we denote by $S_{\sigma }$ the set of irreducible supercuspidal representations $\rho $ of general linear groups, so that the representation obtained by (normalized) parabolic induction from $\rho\otimes\sigma $ is reducible. (More precisely, if $\rho $ is a representation of $GL_k(E)$, then $\rho\otimes\sigma $ is a representation of the standard Levi subgroup $GL_k(E)\times G_{d_0}$ of $G_{d_0+k}$, and $\rho\in S_{\sigma }$ means that the representation of $G_{d_0+k}$ obtained by (normalized) parabolic induction from $\rho\otimes\sigma $ is reducible.)

If $\pi $ is an irreducible smooth complex representations of some $GL_k(E)$, we will say that the supercuspidal support of $\pi $ is \rm separated \it from $\sigma $, if no element of $S_{\sigma }$ is a factor of an element in the supercuspidal support of $\pi $.
\end{definition}

\begin{definition} Let $\CH $ be an extended affine Hecke algebras with parameters, which is a tensor product of extended affine Hecke algebras $\CH _i$ of root data of type $GL$, $SO$ or $Sp$. Let $M$ be a simple $\CH _A$-module, which means that $M$ is a tensor product of simple $\CH _{i,A}$-modules $M_i$.

Then, we say that $M$ has separated support relative to $\CH $, if and only if each $M_i$ has separated support relative to $\CH_i$.
\end{definition}

If $\CO $ is the inertial orbit of an irreducible supercuspidal representation of a standard Levi subgroup of a general linear group and $\sigma $ an irreducible supercuspidal representation of a group of type $G$, let us denote by $\CH_\CO $ and $\CH _{\CO\otimes\sigma}$ the algebras denoted in section \ref{s:Bernstein} respectively by $End_{GL_{d(\CO)}}(\CP _{\CO })$ and $End_{G_{d(\CO\otimes\sigma)}}(\CP _{\CO\otimes\sigma })$. If $(\pi ,V)$ is an object in $Rep(GL_{d(\so )})_{\CO }$, then we denote by $M_{\pi }$ the corresponding $\CH_{\CO }$-module $Hom_{GL_{d(\CO )}}(\CP _{\CO},V)$.

\begin{proposition}\label{p:4.16} Let $\CO $ be the inertial orbit of an irreducible supercuspidal representation of a standard Levi subgroup $M'$ of a general linear group $GL_k(E)$, $\sigma $ an irreducible supercuspidal representation of $G_{d_0}$ and $d=k+d_0$. Write $M=GL_k(E)\times G_{d_0}$. Let $(\pi ,V)$ be a smooth irreducible representation in $Rep(GL_k(E))_{M',\CO}$. Then, the supercuspidal support of $\pi $ is separated from $\sigma $, if and only if the $\CH_{\CO }$-module $M_{\pi }$ has separated support relative to $\CH_{\CO\otimes\sigma}$.
\end{proposition}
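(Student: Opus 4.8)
The plan is to decompose $\CO$ into homogeneous pieces, reduce to a rank-one situation, and then match two dictionaries: one identifying the central character of the $GL$-module $M_{\pi}$ with the supercuspidal support of $\pi$, the other identifying the distinguished parameter values of $\CH_{\CO\otimes\sigma}$ with the reducibility points of $\rho\otimes\sigma$, i.e.\ with $S_{\sigma}$. First I would reduce to the homogeneous case. Write $\CO=\CO_1\otimes\cdots\otimes\CO_r$ with the $\CO_i$ homogeneous of pairwise distinct support, and let $\rho_i$ be a self-dual supercuspidal with inertial orbit $\supp(\CO_i)$. By Proposition \ref{p:2.3} and Theorem \ref{t:2.5}, $\CH_{\CO}\cong\bigotimes_i\CH_{\CO_i}$ and $\CH_{\CO\otimes\sigma}\cong\bigotimes_i\CH_{\CO_i\otimes\sigma}$ compatibly, with $\CH_{\CO}$ the $GL$-Hecke subalgebra of $\CH_{\CO\otimes\sigma}$. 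A simple module over a tensor product of finite-type $\bC$-algebras is a tensor product of simple modules, so $M_{\pi}\cong\bigotimes_i M_{\pi_i}$ and correspondingly $\pi\cong\pi_1\times\cdots\times\pi_r$ with $\pi_i\in Rep(GL)_{\CO_i}$ (the supports being distinct, this product is already irreducible), whence the supercuspidal support of $\pi$ is the concatenation of those of the $\pi_i$. Therefore the supercuspidal support of $\pi$ is separated from $\sigma$ if and only if that of each $\pi_i$ is, and, by the definition preceding the proposition, $M_{\pi}$ has separated support relative to $\CH_{\CO\otimes\sigma}$ if and only if each $M_{\pi_i}$ has separated support relative to $\CH_{\CO_i\otimes\sigma}$. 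So we may assume $\CO$ is homogeneous, with $\rho$ the associated self-dual supercuspidal.

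In the homogeneous case, by Remark \ref{r:2.6} the algebra $\CH_{\CO\otimes\sigma}$ is an affine Hecke algebra of type $A$, of type $B_{l(\CO)}$ with possibly unequal parameters, or a semidirect product of a type-$D$ affine Hecke algebra with $\bZ/2$. If it is of type $A$, then reducibility of $\rho'\otimes\sigma$ for an unramified twist $\rho'$ of $\rho$ would force $\rho'$ conjugate-self-dual, which it is not, so $S_{\sigma}$ meets no twist of $\rho$, while $\CH_{\CO\otimes\sigma}$ carries no distinguished parameters; both conditions then hold vacuously. Otherwise $\CH_{\CO\otimes\sigma}=\CH_B(m_+,m_-)$ in the notation of Section \ref{s:red}, and $M_{\pi}$ is an $\CH_{\CO}$-module whose central character is a Weyl group orbit of some $(q^{\lambda_1},\dots,q^{\lambda_{l(\CO)}})$. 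I would then invoke: (a) under the equivalence of Theorem \ref{t:2.5} the coordinates $q^{\lambda_1},\dots,q^{\lambda_{l(\CO)}}$ are exactly the unramified-twist parameters of the factors of the supercuspidal support of $\pi$ (the Bernstein-center dictionary, \cite{BD}, \cite{Ro}, \cite[section 5]{H1}); and (b) for an unramified twist $\rho|\det|^{\nu}$ of $\rho$, one has $\rho|\det|^{\nu}\in S_{\sigma}$ if and only if $q^{\nu}$ lies in the distinguished set $\{q^{\pm m_+},-q^{\pm m_-}\}$ (suitably read off in the type-$D$ case). Assertion (b) is the translation, through Theorem \ref{t:2.5} and its compatibility with parabolic induction, of the rank-one reducibility computation: the equivalence carries the parabolically induced module $I(\nu)$ over $\CH_{B_1}\subset\CH_{\CO\otimes\sigma}$ to the $G_{d_0+n(\CO)}$-representation induced from $\rho|\det|^{\nu}\otimes\sigma$, whose reducibility points are computed on the Hecke side in Lemmas \ref{l:rank-one1} and \ref{l:rank-one2}; comparing with the reducibility points of $\rho|\det|^{\nu}\otimes\sigma$ (equivalently the poles of the relevant Harish-Chandra $\mu$-function) as in \cite[Proposition 6.1]{H1}, \cite[A.2, C.5]{H3} fixes $m_\pm$ in terms of $S_{\sigma}$.

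Combining (a) and (b): the supercuspidal support of $\pi$ is separated from $\sigma$ iff none of its factors lies in $S_{\sigma}$ iff none of the $q^{\lambda_i}$ in the central character of $M_{\pi}$ lies in $\{q^{\pm m_+},-q^{\pm m_-}\}$ iff $M_{\pi}$ has separated support relative to $\CH_{\CO\otimes\sigma}$. The main obstacle is assertion (b): one must pin down which unramified twists of $\rho$ make $\rho'\otimes\sigma$ reducible and verify that these are precisely the images, under the equivalence of Theorem \ref{t:2.5}, of the reducibility points of $I(\nu)$ found in Lemmas \ref{l:rank-one1}--\ref{l:rank-one2}. This is where the precise normalization of the isomorphism $\End_G(\CP_{\CO,\sigma})\cong\CH_{\CO\otimes\sigma}$ and its compatibility with parabolic induction from \cite{H1}, \cite{H3} are essential, and where the distinction among types $A$, $B$ and $D$ must be carried through.
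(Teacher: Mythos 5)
Your proof is correct and follows essentially the same route as the paper's: decompose $\CO$ into homogeneous pieces using Proposition \ref{p:2.3} and Theorem \ref{t:2.5}, then reduce to the rank-one dictionary that matches factors of the supercuspidal support with coordinates of the weight (the paper cites \cite[Corollary 3.4]{H2} for this) and matches reducibility of the induced representation from $\rho\otimes\sigma$ with the rank-one Hecke computations of Lemmas \ref{l:rank-one1} and \ref{l:rank-one2} via compatibility of the equivalence with parabolic induction. The only cosmetic difference is that you explicitly separate out the type-$A$ (vacuous) case, which the paper handles implicitly.
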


\begin{proof} Write $\CO =\CO _1\otimes\cdots\otimes\CO_r$, where the $\CO_i$ are disjoint homogeneous inertial orbits. Then $\CH_{\CO\otimes\sigma }$ is the tensor product of the $\CH_{\CO_i\otimes\sigma }$ and $M_\pi $ is a tensor product of $\CH_{\CO_i\otimes\sigma }$-modules $M_i$.

We have to show that the supercuspidal support of $\pi $ is separated from $\sigma $, if and only if each $M_i$ has separated support relative to $\CH_{\CO_i\otimes\sigma }$.

By \cite[Corollary 3.4]{H2}, there is a map $\CO_i\rightarrow\Bbb C^\times$, $\rho\mapsto \lambda_{\rho }$, such that, for $\rho $ in $\CO_i$, $\rho $ is factor of a representation in the supercuspidal support of $\pi $, if and only if $\lambda _{\rho }$ is a coefficient of a weight of $M_i$. In addition, as the equivalence of categories preserves parabolic induction, the representation parabolically induced from $\rho\otimes\sigma $ is irreducible, if and only if the $\CH_{\CO_\rho \otimes\sigma }$-module $\CH_{\CO_\rho \otimes\sigma }\otimes _{\CH_{\CO_\rho }}M_\rho $ is irreducible (where $\CO_{\rho }$ denotes the inertial orbit of $\rho $). The latter is by the Lemmas \ref{l:rank-one1} and \ref{l:rank-one2} equivalent to say that $M_{\rho }$ has separated support relative to $\CH_{\CO_\rho \otimes\sigma }$. Saying that this is true for each factor of a supercuspidal representation in the supercuspidal support of $\pi $, is equivalent to say that the weight vector of each $M_i$ satisfies the separateness condition relative to $\CH_{\CO_i\otimes\sigma}$. This proves the proposition.
\end{proof}

We can now prove the main theorem of the paper, which was conjectured in \cite{LT}:

\begin{theorem}\label{c:4.16}
Let $\pi $ be an irreducible representation of $GL_k(E)$. Assume $E=F$ (resp. $E\ne F$). Let $G$ be a type of classical groups defined over $F$ that is not a unitary group (resp. that is a unitary groups with splitting field $E$), let $d_0$ be a nonnegative integer and let $\sigma$ be an irreducible supercuspidal representation $\sigma $ of $G_{d_0}$ which is separated from the supercuspidal support of $\pi $. 
Then the structure of the lattice of submodules and, in particular, the reducibility of the representation of $G_{d_0+k}$ obtained by (normalized) parabolic induction from $\pi\otimes\sigma $ are independent of $G$, $d_0$, and $\sigma$ (i.e., they only depend on $\pi$).
\end{theorem}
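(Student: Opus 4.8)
The plan is to translate the $p$-adic statement into the Hecke-algebra setting built up in Sections \ref{s:Bernstein} and \ref{s:red}, and then invoke the parameter-independence results of Section 4 (Corollary \ref{c:true2}) together with the isogeny/functoriality statements. First I would fix $\CO=\CO_1\otimes\cdots\otimes\CO_r$ to be the union of the inertial orbits through the supercuspidal support of $\pi$, decomposed into disjoint homogeneous orbits with distinct support; by Proposition \ref{p:2.1} and Theorem \ref{t:2.5} (more precisely Corollary \ref{c:2.6}), the Bernstein component $\R(G_{d_0+k})_{\CO\otimes\sigma}$ is equivalent to the category of modules over $\CH_{\CO\otimes\sigma}=\bigotimes_i\CH_{\CO_i\otimes\sigma}$, and this equivalence is compatible with parabolic induction. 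Under this equivalence the induced representation $\Ind(\pi\otimes\sigma)$ corresponds to the induced module $\CH_{\CO\otimes\sigma}\otimes_{\CH_\CO}M_\pi$, so its lattice of submodules is isomorphic to that of the induced Hecke module. Hence it suffices to show that the latter lattice depends only on $M_\pi$ (equivalently, by the $GL$-side of the equivalence, only on $\pi$) and not on the data $(G,d_0,\sigma)$.

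The second step is to identify, via Remark \ref{r:2.6}, each tensor factor $\CH_{\CO_i\otimes\sigma}$ as an (extended) affine Hecke algebra of type $A$, $B$, or $D$, and each $\CH_{\CO_i}\subset\CH_{\CO_i\otimes\sigma}$ as its $GL$-type subalgebra $\CH_A$; the parameters $m_+,m_-$ (or their analogues in types $B,D$, and for $O(2n)$) are determined by $\sigma$ through the reducibility points of $\rho\otimes\sigma$, recorded in $S_\sigma$. Here is where Proposition \ref{p:4.16} enters: the hypothesis that the supercuspidal support of $\pi$ is separated from $\sigma$ is exactly equivalent to saying that $M_\pi=\bigotimes_i M_i$ has separated central character relative to $\CH_{\CO\otimes\sigma}$, i.e.\ each $M_i$ has separated support relative to $\CH_{\CO_i\otimes\sigma}$ in the sense of Section \ref{s:3.5}. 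With separatedness in hand, Corollary \ref{c:true2} (via Corollary \ref{c:true} and Proposition \ref{p:true}, which already incorporate the type-$D$ comparison of Lemma \ref{l:typeD} and the isogeny-independence Lemma \ref{l:isogeny}) gives, factor by factor, an isomorphism of the lattices of submodules of $\CH_{\CO_i\otimes\sigma}\otimes_{\CH_{\CO_i}}M_i$ for any two choices of parameters. Taking the tensor product over $i$ — using exactness of the equivalences and that submodule lattices of finite-length modules behave well under $\otimes$ in this setting — yields the parameter-independence of the full induced Hecke module, hence of $\Ind(\pi\otimes\sigma)$.

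Two points require care and are where the real work lies. The first is the bookkeeping in passing between different ambient types $G$ and different $d_0$: changing $\sigma$ (and $G_{d_0}$) can change both the parameters of the Hecke algebra factors and which type ($B$ versus $D$, or the $O(2n)$ case) a given factor $\CH_{\CO_i\otimes\sigma}$ is; one must check that the identification of $\CH_{\CO_i}$ with $\CH_A$ is canonical enough that ``the same $\pi$'' really produces ``the same $M_\pi$'' on the $GL$-side, independently of these choices — this is handled by the compatibility of the equivalence with supercuspidal support and by \cite[Corollary 3.4]{H2} as used in the proof of Proposition \ref{p:4.16}. The second, genuinely the main obstacle, is that Corollary \ref{c:true2} is stated for root data of type $SO(2n+1)$, $Sp(2n)$, $SO(2n)$ and for $O(2n)$; one must ensure that every algebra $\CH_{\CO_i\otimes\sigma}$ that actually arises from a classical (including unitary) $p$-adic group falls under one of these cases after the reductions of Section 4 — in particular that the unitary-group case, absorbed into the general framework only through \cite[Appendices A, C]{H3}, still yields Hecke algebras of exactly these root-datum types with the $GL$-subalgebra in the expected position. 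Granting this (which the cited appendices provide), the theorem follows by assembling the factorwise statements.
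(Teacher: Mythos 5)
Your proposal reproduces the paper's argument in all essentials: translate via Theorem \ref{t:2.5} and Remark \ref{r:2.6} to an induced module over a tensor product of (extended) affine Hecke algebras of types $A$, $B$, $D$, match the separatedness hypothesis via Proposition \ref{p:4.16}, and then invoke Corollaries \ref{c:true} and \ref{c:true2} (which rest on Kato's geometry, the type-$D$ comparison, and isogeny-independence) together with Lemma \ref{l:4.10} to transport lattices of submodules. The two "points of care" you flag — canonicity of the $GL$-subalgebra identification and the possibility of the ambient type changing between $B$ and $D$ when $\sigma$ or $G$ varies — are exactly the subtleties the paper also notes at the end of its proof.
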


\begin{proof} Denote by $\CO$ the supercuspidal support of $\pi $. By Theorem \ref{t:2.5} and Remark \ref{r:2.6}, the Bernstein component $Rep(G_{d(\CO)})_{\CO\otimes\sigma }$ is equivalent to the category of right modules over a tensor product of extended affine Hecke algebras of type $A$, $B$ or $D$ as above and this equivalence of category is compatible with parabolic induction and preserves the lattice of submodules of corresponding objects by Lemma \ref{l:4.10}. Saying that $\pi $ satisfies the support condition relative to $\sigma $ is by Proposition \ref{p:4.16} equivalent to say that the corresponding modules over the extended affine Hecke algebras of type $A$, $B$ and $D$ satisfy this support condition. It is now immediate from Corollary \ref{c:true}, \ref{c:true2}  that the structure of the lattice of submodules of the representation induced from $\pi\otimes\sigma $ and, in particular, its reducibility do not depend on the parameters, even if one switches from an affine Hecke algebra of type $B$ to an extended affine Hecke algebra of type $D$ (or inversely - this may actually happen when changing $\sigma $, as \cite[Proposition 1.13, 6.1] {H1} shows with \cite[A.2, C.5]{H3}, or when changing the type of $G$ according to our statement).
\end{proof}

\begin{remark}\label{r:reducibility} (i) By arguments analogous to the ones used in the proof of Theorem \ref{c:4.16}, it follows from Proposition \ref{p:reduc} that the representation parabolically induced from $\pi\otimes\sigma $ is always reducible, if the support condition is not satisfied. This gives another proof of \cite[Theorem 1.1]{LT}, which is also valid in positive characteristic.

(ii) The Theorem \ref{c:4.16} and the preceding remark become in general wrong, if one switches from a non-unitary group to a unitary group (or inversely): this comes from the fact that the appearance (or not) of affine Hecke algebras of type A is related to the property of a representation of a general linear group to be respectively \rm self-dual \it or \rm self-(conjugated-)dual. \it Of course, these properties are in general not equivalent.
\end{remark}

\end{document}